\documentclass[12pt,a4paper]{amsart}

\usepackage[centertags]{amsmath}
\usepackage{amsfonts,ifthen,latexsym,amssymb,amsthm, amscd}
\usepackage[utf8]{inputenc}
\usepackage[shortlabels]{enumitem}

	
\usepackage[usenames,dvipsnames]{color}



\usepackage[american]{babel} 


\usepackage{graphicx}

\setlength{\textwidth}{15.9cm}
\setlength{\textheight}{23cm}
\setlength{\hoffset}{-1.7cm}
\setlength{\voffset}{-1.8cm}
\setlength{\parskip}{2mm}
\setlength{\parindent}{0mm}


\usepackage{url}
\usepackage{hyperref}
\hypersetup{colorlinks=true,citecolor=blue,filecolor=blue,linkcolor=blue,urlcolor=blue}


\newcommand{\beq}{\begin{equation}}
\newcommand{\eeq}{\end{equation}}
\newtheorem{theorem}{Theorem}[section]
\newtheorem*{theorem*}{Theorem}
\newtheorem{lemma}[theorem]{Lemma}
\newtheorem*{lemma*}{Lemma}
\newtheorem{proposition}[theorem]{Proposition}

\newtheorem{cory}[theorem]{Corollary}
\newtheorem{definition}[theorem]{Definition}
\theoremstyle{definition}
\newtheorem{remark}[theorem]{Remark}
\newtheorem{remarks}[theorem]{Remarks}
\newtheorem*{remarks*}{Remarks}


\makeatletter
\newtheorem*{rep@theorem}{\rep@title}
\newcommand{\newreptheorem}[2]{%
\newenvironment{rep#1}[1]{%
 \def\rep@title{#2 \ref{##1}}%
 \begin{rep@theorem}}%
 {\end{rep@theorem}}}
\makeatother

\newreptheorem{theorem}{Theorem}
\newreptheorem{lemma}{Lemma}

\newcommand{\al}{{\alpha}}

\newcommand{\be}{{\beta}}

\newcommand{\eps}{{\varepsilon}}

\newcommand{\De}{{\Delta}}
\newcommand{\ga}{{\gamma}}
\newcommand{\Ga}{{\Gamma}}

\newcommand{\la}{{\lambda}}
\newcommand{\La}{{\Lambda}}
\newcommand{\si}{{\sigma}}
\newcommand{\Si}{{\Sigma}}
\renewcommand{\phi}{{\varphi}}


\newcommand\aut{\operatorname{Aut}}

\newcommand{\mat}{\operatorname{Mat}}

\newcommand\R{\mathbb R}
\newcommand\Q{\mathbb Q}
\newcommand\Z{\mathbb Z}
\newcommand\C{\mathbb C}
\newcommand\N{\mathbb N}

\newcommand\ZZ{\mathbf C}

\newcommand{\into}{\hookrightarrow}
\newcommand{\actson}{\curvearrowright}

\newcommand{\isimplied}{\Leftarrow}

\newcommand{\wt}[1]{{\widetilde {#1}}}
\newcommand{\wh}[1]{{\widehat {#1}}}
\newcommand{\cal}[1]{{\mathcal #1}}

\makeatletter
\newcommand*\if@single[3]{%
  \setbox0\hbox{${\mathaccent"0362{#1}}^H$}%
  \setbox2\hbox{${\mathaccent"0362{\kern0pt#1}}^H$}%
  \ifdim\ht0=\ht2 #3\else #2\fi
  }
\newcommand*\rel@kern[1]{\kern#1\dimexpr\macc@kerna}
\newcommand*\widebar[1]{\@ifnextchar^{{\wide@bar{#1}{0}}}{\wide@bar{#1}{1}}}
\newcommand*\wide@bar[2]{\if@single{#1}{\wide@bar@{#1}{#2}{1}}{\wide@bar@{#1}{#2}{2}}}
\newcommand*\wide@bar@[3]{%
  \begingroup
  \def\mathaccent##1##2{%
    \if#32 \let\macc@nucleus\first@char \fi
    \setbox\z@\hbox{$\macc@style{\macc@nucleus}_{}$}%
    \setbox\tw@\hbox{$\macc@style{\macc@nucleus}{}_{}$}%
    \dimen@\wd\tw@
    \advance\dimen@-\wd\z@
    \divide\dimen@ 3
    \@tempdima\wd\tw@
    \advance\@tempdima-\scriptspace
    \divide\@tempdima 10
    \advance\dimen@-\@tempdima
    \ifdim\dimen@>\z@ \dimen@0pt\fi
    \rel@kern{0.6}\kern-\dimen@
    \if#31
      \overline{\rel@kern{-0.6}\kern\dimen@\macc@nucleus\rel@kern{0.4}\kern\dimen@}%
      \advance\dimen@0.4\dimexpr\macc@kerna
      \let\final@kern#2%
      \ifdim\dimen@<\z@ \let\final@kern1\fi
      \if\final@kern1 \kern-\dimen@\fi
    \else
      \overline{\rel@kern{-0.6}\kern\dimen@#1}%
    \fi
  }%
  \macc@depth\@ne
  \let\math@bgroup\@empty \let\math@egroup\macc@set@skewchar
  \mathsurround\z@ \frozen@everymath{\mathgroup\macc@group\relax}%
  \macc@set@skewchar\relax
  \let\mathaccentV\macc@nested@a
  \if#31
    \macc@nested@a\relax111{#1}%
  \else
    \def\gobble@till@marker##1\endmarker{}%
    \futurelet\first@char\gobble@till@marker#1\endmarker
    \ifcat\noexpand\first@char A\else
      \def\first@char{}%
    \fi
    \macc@nested@a\relax111{\first@char}%
  \fi
  \endgroup
}
\makeatother
\newcommand{\wb}[1]{{\widebar {#1}}}


\newcommand*\mcapinn[2]{\vcenter{\hbox{$\mathsurround=0pt
  \ifx\displaystyle#1\textstyle\else#1\fi\bigcap$}}}

\newcommand*\mcupinn[2]{\vcenter{\hbox{$\mathsurround=0pt
  \ifx\displaystyle#1\textstyle\else#1\fi\bigcup$}}}

\newcommand\sscdot{{\cdot}}
\newcommand\ssminus{{\,-\,}}
\newcommand\sstimes{{\times}}
\newcommand\ssin{{\,\in\,}}

\newcommand\sscap{{\,\cap\,}}

\newcommand\ssactson{\,{\actson}}

\newcommand\chr{\operatorname{char}}

\newcommand\supp{{\operatorname{supp}}}

\newcommand\diag{\operatorname{Diag}}
\newcommand\vN{\operatorname{vN}}
\newcommand\dimvn{\dim_{\vN}}

\newcommand\lin{\operatorname{span}}
\newcommand\im{\operatorname{im}}


\renewcommand{\k}{\mathbf k}

\newcommand{\fun}{\operatorname{Fun}}
\def\mathclap#1{\text{\hbox to 0pt{\hss$\mathsurround=0pt#1$\hss}}}
\renewcommand{\ge}{\geqslant}
\renewcommand{\le}{\leqslant}
\renewcommand{\setminus}{\smallsetminus}

\begin{document}

\title{On computing homology gradients over finite fields}
\author{Łukasz Grabowski, Thomas Schick}

\address{Łukasz Grabowski - Department of Mathematics and Statistics,  Lancaster University, Lancaster LA1 4YF, United Kingdom}
\email{graboluk@gmail.com}
\address{Thomas Schick - Georg-August-Universit{\"a}t G{\"o}ttingen, Bunsenstr. 3, 37073 G{\"o}ttingen, Germany}
\email{thomas.schick@math.uni-goettingen.de}
\begin{abstract}
Recently the so-called Atiyah conjecture about $l^2$-Betti numbers has been disproved. The counterexamples were found using a specific method of computing the spectral measure of a matrix over a complex group ring. We show that in many situations the same method allows to compute homology gradients, i.e.~generalizations of $l^2$-Betti numbers to fields of arbitrary characteristic. As an application we point out that (i) the homology gradient over any field of characteristic different than $2$ can be an irrational number, and  (ii) there exists a finite CW-complex with the property that the homology gradients of its universal cover taken over different fields have infinitely many different values.
\end{abstract}
\maketitle
\newcommand{\Fa}{\Ga}

\setcounter{tocdepth}{1}
\tableofcontents

\section{Introduction}
Since their introduction by Atiyah \cite{Atiyah1976}, $l^2$-Betti numbers have been studied and used in many different contexts. We refer the reader to the very readable introductory article \cite{Eckmann_intro} and to the comprehensive book \cite{Lueck:Big_book} for more information.

For the current paper, the motivating question goes back to \cite{Atiyah1976}: are the $l^2$-Betti numbers rational?  This was popularized under the name \textit{Atiyah  conjecture} (e.g.~\cite[Chapter 10]{Lueck:Big_book}), although in \cite{Atiyah1976} it is stated as a problem. Only recently, motivated in part by the approach of \cite{Grigorchuk_Zuk2001} and \cite{Dicks_Schick}, Austin \cite{arxiv:austin-2009} showed that there exists a normal covering of a finite CW-complex with at least one irrational $l^2$-Betti number. Several improvements followed shortly afterwards (\cite{grabowski-on-turing-dynamical-systems-and-the-atiyah-problem} and \cite{arxiv:pichot_schick_zuk-2010}, \cite{arxiv:lehner_wagner-2010}, \cite{arXiv:grabowski-2010-2}).

Let us note in passing that the Atiyah conjecture remains open and very interesting in the case when the deck transformation group is torsion-free (see e.g.~\cite{Linnel:Atiyah_conjecture_for_free_groups}, 
\cite{dodziuk_linnell_mathai_schick_yates:approximating_l2_invariants}). 

The $l^2$-Betti numbers are invariants associated to a normal covering $\si\colon \wt S \to S$ of CW-complexes. Let us recall their definition in the case when $S$ is a finite  CW-complex  and the deck transformation group $\La$ of $\si$ is residually finite. Let $\La_1\supset \La_2\supset \,\ldots\,$ be a descending sequence of finite-index normal subgroups of $\La$ such that $\bigcap_{i=1}^\infty \La_i$ is the trivial subgroup,  and let  $S_1,S_2,\,\ldots\,$ be the corresponding sequence of normal covers of $S$. By a classical result of L\"uck \cite{lueck:aroximating_l2_invariants_by_their_finite}, the $i$-th $l^2$-Betti number of $\si$  can be defined as the limit
\beq\label{eq-lueck}
    \lim_{k\to \infty} \frac{\dim_\Q H_i(S_k;\Q)}{[\La:\La_k]}.
\eeq
This definition leads to an interesting generalization of $l^2$-Betti numbers: instead of the rational homology, one can try to use homology with coefficients in an arbitrary field $\k$. This was first suggested by Farber \cite{MR1625742}, and later studied extensively by Lackenby \cite{MR2496348} and others (e.g.~\cite{MR2966663},  \cite{MR2736326}).

If $\k$ is a field of positive characteristic, it is not known in general whether the limit in~\eqref{eq-lueck} exists when $\Q$ is replaced by $\k$. However, by results of Elek and Szab{\'o} \cite{MR2823074}, this is the case when $\La$ is an amenable group. Following Lackenby, we call the resulting limits the \textit{$\k$-homology gradients}. Throughout the article we will only consider the situation when $\La$ is amenable.

It is interesting to study the analogs of the Atiyah problem for $\k$-homology gradients. For example, a fundamental result of Linnell~\cite{Linnel:Atiyah_conjecture_for_free_groups} implies that if $\La$ is amenable and the group ring $\Q[\La]$ has no zero-divisors, then the $l^2$-Betti numbers are integers.  In~\cite{MR2736326}, the same result is obtained for $\k$-homology gradients, for an arbitrary field $\k$.

The motivation for the work presented here was to generalize the examples of
Austin and others   of ``exotic'' $l^2$-Betti numbers to the setting of
$\k$-homology gradients. At the heart of that work lies a method which translates the computation of  the spectral measure of a matrix over $\C[\La]$ (an a priori functional analytic task), to the computation of countably many spectra of finite matrices over $\C$. Section~\ref{sec-motivation} is devoted to presenting this computational tool and motivating why it is worthwhile to develop a similar tool for matrices over $\k[\La]$ for arbitrary fields $\k$. 

Our main technical result is such a computational tool for matrices over $\k[\La]$. It is contained in Theorem~\ref{thm:compu-tational-tool}. Most of Section~\ref{sec-definition} is  devoted to definitions and preliminary results needed in the proof of Theorem~\ref{thm:compu-tational-tool}.

Our remaining results are applications of Theorem~\ref{thm:compu-tational-tool}. They will be proved in Section~\ref{sec-app}.  

\begin{theorem}\label{thm:intro-finitely-generated}
If $\chr(\k) \neq 2$ then every non-negative real number is equal to the third $\k$-homology gradient of some normal covering of a finite CW-complex.
\end{theorem}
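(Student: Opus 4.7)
The plan is to adapt, in positive characteristic, the constructions of ``exotic'' $l^2$-Betti numbers from the complex-coefficient setting \cite{arxiv:austin-2009, grabowski-on-turing-dynamical-systems-and-the-atiyah-problem, arXiv:grabowski-2010-2} to the setting of $\k$-homology gradients, using Theorem~\ref{thm:compu-tational-tool} as the key tool that replaces the spectral-measure computation over $\C$.

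First I would fix an amenable, residually finite, finitely generated group $\La$ of lamplighter type, say $\La = \bigl(\bigoplus_{\Z}\Z/2\bigr) \rtimes \Z$. Over any field $\k$ with $\chr(\k)\ne 2$ the two central idempotents $\tfrac{1\pm a}{2}\in\k[\Z/2]$ are defined, and they generate inside $\k[\La]$ a self-similar subalgebra rich enough to reproduce the characteristic-zero constructions. This is the precise point at which the hypothesis $\chr(\k)\ne 2$ enters.

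Second, given $r \ge 0$, one writes down a square matrix $A_r \in \mat_n(\k[\La])$ whose $\k[\La]$-kernel has dimension gradient equal to $r$. The blueprint is the Turing-machine / random-walk encoding used in characteristic zero: a real number is encoded into the combinatorial structure of a self-similar convolution operator on a Cayley graph of $\La$, and the normalised kernel dimensions along a descending chain $\La_k$ of finite-index normal subgroups with $\bigcap \La_k = \{1\}$ accumulate at $r$. Theorem~\ref{thm:compu-tational-tool} legitimises this passage to positive characteristic: it guarantees that
\[
  \lim_{k\to\infty}\frac{\dim_\k \ker\bigl(A_r \bmod \La_k\bigr)}{[\La:\La_k]}
\]
exists and equals the intrinsic $\k[\La]$-dimension of $\ker A_r$. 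This is the principal technical step: one must verify that the characteristic-zero encoding is robust under changing the coefficient field and that \emph{every} non-negative real, rather than merely a dense set of them, is realised.

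Third, I would realise $A_r$ as the cellular boundary map $\partial_3$ in the universal cover of a finite CW-complex. Starting from a finite presentation $2$-complex $X$ of $\La$, attach free $\La$-equivariant $3$-cells so that $\partial_3$ on the new summand equals $A_r$, and then attach $4$-cells to ensure $\partial_4 = 0$ on that same summand. The resulting finite CW-complex $S$ has $H_3(\wt S;\k) \cong \ker A_r$ as a $\k[\La]$-module. By Elek--Szab\'o~\cite{MR2823074} the $\k$-analogue of the limit~\eqref{eq-lueck} exists for the amenable group $\La$, and by the previous step it equals $r$. This yields a normal covering $\si \colon \wt S \to S$ of a finite CW-complex whose third $\k$-homology gradient is $r$, as required.
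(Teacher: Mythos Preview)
Your outline has the right shape---compute kernel gradients via Theorem~\ref{thm:compu-tational-tool}, then convert to homology gradients via Proposition~\ref{prop-eck}---but step~2 hides the entire argument, and several details are incorrect.

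First, you cannot use the single group $\La=\ZZ_2\wr\ZZ$: it is not known that every real arises as a kernel gradient over this group even in characteristic zero. The paper instead uses, for each subset $\Si\subset\N$, a semidirect product $\Ga\ltimes_\Si A$ with $A=\oplus_\Z\ZZ_2$ and a fixed amenable $\Ga$, where the \emph{action} depends on $\Si$ (Theorem~\ref{thm-black-box-sigma}). These groups are amenable but in general not residually finite (Remark~\ref{rem-why}(i)), so the approximation by a chain of finite-index normal subgroups that you invoke is unavailable; the kernel gradient must be defined and computed via F\o lner sequences. Relatedly, $\ZZ_2\wr\ZZ$ is not finitely presented, so ``a finite presentation $2$-complex of $\La$'' does not exist; this is exactly why the theorem asserts only ``some normal covering'' and not ``the universal covering'' (Remark~\ref{rem-why}(ii)).

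Second, the substance of the computation is missing. Theorem~\ref{thm:compu-tational-tool} reduces the kernel gradient to a countable sum of kernel dimensions of \emph{finite} matrices $M(G_i,T;\k)$ over $\k$; it does not by itself say anything about field-independence. The crucial step is Lemma~\ref{lem:nice-graph-indep-of-k}: for the specific loop-tree graphs arising from the construction in \cite{grabowski-on-turing-dynamical-systems-and-the-atiyah-problem}, one checks by hand that $\dim_\k(\ker M(\al;\k)\cap\ker M(\be;\k))$ is independent of $\k$. Only then does the $\Q$-computation (which yields $\tfrac1{64}-\tfrac2{8^3}\sum_{i\in\Si}2^{-i}$, Corollary~\ref{cory-main-proposition}) transfer to characteristic $p$. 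This gives merely an interval of achievable values; one then uses closure of kernel gradients under addition (Lemma~\ref{lm-appendix-addition}) to reach all sufficiently large reals, and halving (Lemma~\ref{lm-appendix-multiplication}, where $\chr(\k)\ne2$ enters a second time) to reach every non-negative real. You flagged both the field-independence issue and the ``every real'' issue, but supplied no argument for either; these are precisely the two places where work is required beyond citing Theorem~\ref{thm:compu-tational-tool}.
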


\begin{remarks}\label{rem-why}
\begin{enumerate}[(i),wide]
\item The deck transformation groups of the coverings which appear in Theorem~\ref{thm:intro-finitely-generated} are amenable but not, in general, residually finite. However, the $\k$-homology gradients can be defined whenever the deck transformation group is amenable (see \cite{cheeger_gromov:l2_cohomology_and_group_cohomology} and \cite{Elek_the_strong_approximation_conjecture_holds_for_amenable_groups}). We will state the general definition of $\k$-homology gradients in Section 3.

\item In \cite{grabowski-on-turing-dynamical-systems-and-the-atiyah-problem} and \cite{arxiv:pichot_schick_zuk-2010} it is shown that if $\k=\Q$, the expression "some normal covering" in Theorem~\ref{thm:intro-finitely-generated} can be replaced by "the universal covering". The reason why we cannot translate these results to an arbitrary field $\k$ is that their proofs employ the Higmann embedding theorem. As such, the examples in \cite{grabowski-on-turing-dynamical-systems-and-the-atiyah-problem} and \cite{arxiv:pichot_schick_zuk-2010} have deck transformation groups which are not amenable in general, and consequently their  $\k$-homology gradients are not known to be well-defined.
\end{enumerate}
\end{remarks}

\begin{theorem}\label{thm:intro-lamplighter3}
If $\chr(\k) \neq 2$ then there exists a finite CW-complex whose fundamental group
is residually finite $2$-step solvable, and such that the third $\k$-homology
gradient of its universal covering is equal to
\beq\label{eq-irr}
\frac1{64}- \frac18\sum_{k=1}^\infty \frac1{2^{k^2+4k+6}}.
\eeq
\end{theorem}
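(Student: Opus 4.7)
The plan is to realize the number \eqref{eq-irr} as the third $\k$-homology gradient of the universal cover of a $3$-dimensional finite CW-complex whose fundamental group is the lamplighter group $\La = (\ZZm 2) \wr \Z$, which is indeed residually finite and $2$-step solvable. The target will be a specific square matrix $A \in \k[\La]^{n \times n}$ arising in the work of Dicks--Schick \cite{Dicks_Schick}, which was used to produce the first irrational $l^2$-Betti numbers. I would first verify that $A$ can be chosen so that its transpose appears as the third cellular boundary map in the universal cover of a suitable complex: start from a finite presentation $2$-complex for $\La$, attach $n$ free $3$-cells whose attaching words realize the columns of $A$, and (after stabilizing by identity blocks and possibly by dummy cells of lower dimension) arrange that the second boundary map has trivial $\k$-gradient of kernel contribution in degree $3$, so that the third $\k$-homology gradient of the universal cover equals the $\vN$-style $\k$-dimension gradient of $\ker A$ over $\k[\La]$.

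Next, I would apply Theorem~\ref{thm:compu-tational-tool} to $A$. The theorem translates the computation of the $\k[\La]$-dimension of $\ker A$ into the computation of the ranks of a concrete sequence of finite matrices $A_1, A_2, \ldots$ over $\k$, obtained by pulling back $A$ along a chain of finite-index normal subgroups of $\La$ whose intersection is trivial (which exists by residual finiteness). The $\k$-homology gradient of interest is then
\beq
\lim_{k\to\infty} \frac{\dim_\k \ker A_k}{[\La:\La_k]}.
\eeq

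The crux is to show that over any field $\k$ with $\chr(\k) \ne 2$ the finite-dimensional computation carried out by Dicks--Schick yields the \emph{same} rational contributions that sum to \eqref{eq-irr}. Their analysis diagonalizes the finite matrices $A_k$ (which have integer entries) by exploiting the iterated wreath-product structure of finite quotients of $\La$; the eigenvalues turn out to be explicit algebraic integers, and the multiplicities are counted by a combinatorial/ Fourier argument over $\ZZm 2^m$. Precisely the step of Fourier-diagonalizing the $(\ZZm 2)^m$-action requires inverting $2$ in $\k$, which is the sole place the assumption $\chr(\k)\ne 2$ is used: with $2$ invertible the group algebra $\k[(\ZZm 2)^m]$ is semisimple, its primitive idempotents exist and agree integrally with the classical ones, so the multiplicities of the zero eigenvalue of $A_k$ are the same rational numbers as in the complex case.

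The main obstacle will be this last verification: one must replay the Dicks--Schick eigenvalue bookkeeping in arbitrary characteristic $\ne 2$ and check that no cancellation or degeneracy introduces a characteristic-dependent correction. Once this is in hand, summing the contributions of the zero eigenvalue over the finite approximations reproduces the series in \eqref{eq-irr} term by term, and Theorem~\ref{thm:compu-tational-tool} guarantees that this limit is indeed the third $\k$-homology gradient of the constructed complex.
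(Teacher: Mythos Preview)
There are several genuine gaps in your proposal.

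\textbf{The lamplighter group is not finitely presented.} You propose to ``start from a finite presentation $2$-complex for $\La$'' with $\La=(\ZZm 2)\wr\Z$, but this group has no finite presentation. Consequently your construction of a finite CW-complex with $\pi_1=\La$ cannot get off the ground. In the paper this issue is handled by first producing a matrix over $\Z_2[(\ZZ_2\wr\ZZ)^3]$ and then invoking the explicit embedding of $\ZZ_2\wr\ZZ$ (due to Grigorchuk--Linnell--Schick--\.Zuk) into a finitely presented $2$-step solvable group; Lemma~\ref{lm-appendix-subgroups} then transfers the kernel gradient to the larger group, and only at that stage does Proposition~\ref{prop-eck}(ii) yield a finite complex with the desired universal cover.

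\textbf{Wrong source and wrong operator.} The Dicks--Schick computation produces the \emph{rational} value $\tfrac13$; it does not produce the number~\eqref{eq-irr}. The operator realizing~\eqref{eq-irr} comes from \cite[Section~5.3]{grabowski-on-turing-dynamical-systems-and-the-atiyah-problem} and lives over $(\ZZ_2\wr\ZZ)^3$, not over the single lamplighter. Moreover it is a pair $(S,T)$ of operators (equivalently a $2\times 1$ matrix), not a square matrix.

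\textbf{Mischaracterization of Theorem~\ref{thm:compu-tational-tool}.} You describe the computational tool as reducing to finite quotients $\La/\La_k$. It does not: it decomposes the kernel gradient as a weighted sum over an exhausting sequence of simply-connected $T$-graphs in the Pontryagin dual, yielding adjacency operators $M(G_i,T;\k)$ on \emph{finite graphs}. The characteristic-independence you need is then the concrete linear-algebra statement of Lemma~\ref{lem:nice-graph-indep-of-k}: for the specific loop-tree graphs arising here, $\dim_\k\big(\ker M(\al;\k)\cap\ker M(\be;\k)\big)$ depends only on the combinatorial type $(a,b)$, not on $\k$. That lemma---not a replay of an eigenvalue computation---is the actual heart of the argument.
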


It is easy to see that the number~\eqref{eq-irr} is irrational.

The final result is an example of a finite CW-complex whose $\k$-homology gradients have infinitely many different values, depending on the characteristic of $\k$. For a field $\k$ of characteristic $\chr(\k)$ larger than $2$ let $o_{\k}(2)$ be the multiplicative order of $2$ in $\k^\ast$, let $L(\k)\subset \{2,3,\ldots,\chr(\k){+}1 \}$ be the finite set of minimal representatives in $\{2,3,\ldots,\chr(\k){+}1\}$ of the powers $2^0,2^1,2^2,\ldots$ taken modulo $\chr(\k)$, and for $x\ssin L(\k)$ let $r_{\k}(x)\ge 0$ be the smallest natural number such that $2^{r_{\k}(x)} =x \mod \chr(\k)$.

\begin{theorem}\label{thm-intro-positive} There exists a finite CW-complex $X$ whose
  fundamental group is residually finite $2$-step solvable, such that for all fields $\k$ with $\chr(\k)>2$ the third $\k$-homology
  gradient of the universal covering of $X$ is equal to
\beq\label{eq-rational-gradient}
1344\cdot \left(\frac{47}{64} + \frac{1}{128}\sscdot \frac{1}{2^{o_{\k}(2)}-1}
  + \frac{1}{64}\sscdot \frac{2^{\chr(\k)}}{2^{\chr(\k)}-1}\sscdot\frac{2^{o_{\k}(2)}}{2^{o_{\k}(2)}-1} \sscdot \sum_{x\in
    L(\k)} {2^{-(x+r_{\k}(x))}}\right),
\eeq

\end{theorem}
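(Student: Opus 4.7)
The plan is to apply Theorem~\ref{thm:compu-tational-tool} to a carefully chosen matrix $T$ over a group ring $\k[\La]$, where $\La$ is a $2$-step solvable, residually finite group of lamplighter type. As in the earlier work of Austin, Grabowski, and Pichot--Schick--Zuk, I would realise $T$ as (a block of) the third cellular boundary map of a finite CW-complex $X$ with $\pi_1(X)\sseq\La$, so that the third $\k$-homology gradient of the universal cover of $X$ equals the von Neumann $\k$-dimension of $\ker T$. Theorem~\ref{thm:compu-tational-tool} will then express this dimension as an explicit (countable) sum of corank defects of finite matrices over $\k$, and the task reduces to evaluating that sum.

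For the construction of $\La$ and $T$, I would take $\La$ to be a variant of the lamplighter group, of the form $\La\sseq\bigl(\bigoplus_{i\ssin\Z}\Z/N\bigr)\rtimes\Z$ for a suitable fixed $N$ (independent of $\k$), chosen so that the spectrum of $T$ on finite quotients is controlled by the dynamics of multiplication by $2$ on a cyclic group. Mimicking the ``tailoring'' used in \cite{grabowski-on-turing-dynamical-systems-and-the-atiyah-problem} and \cite{arxiv:pichot_schick_zuk-2010}, I would build $T$ as a sum $T\sseq\sum_j c_j \si^j + (\text{lamplighter terms})$ where the coefficients $c_j\ssin\k$ and shifts $\si^j$ are designed so that, after diagonalising the abelian normal subgroup of $\La$ by characters, each character $\chi$ contributes to the kernel of $T$ only when a precise divisibility condition modulo $\chr(\k)$ holds. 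The periodicity of the multiplicative-by-$2$ map on $\Z/\chr(\k)$ is the reason $o_{\k}(2)$ and $L(\k)$ appear in the final answer.

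Once the computational tool is applied, the resulting series naturally splits according to which finite quotient of the abelian subgroup contributes a nonzero kernel dimension. I would organise the sum into three pieces: (a)~a finite initial portion, coming from the characters of small order, which after normalisation by the cell-count gives the constant $\frac{47}{64}$; (b)~a geometric tail indexed by the periodic orbit of $1$ under multiplication by $2$, which after summing yields $\frac{1}{128}\sscdot\frac{1}{2^{o_{\k}(2)}-1}$; and (c)~a cross-term indexed jointly by the orbits of nontrivial elements in $\Z/\chr(\k)$ under multiplication by $2$, whose representatives form exactly the set $L(\k)$, with $r_\k(x)$ tracking how long it takes to reach $x$ from $1$. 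Summing over the two independent periodicities (one of length $\chr(\k)$, one of length $o_{\k}(2)$) produces the two geometric factors $\frac{2^{\chr(\k)}}{2^{\chr(\k)}-1}$ and $\frac{2^{o_{\k}(2)}}{2^{o_{\k}(2)}-1}$ multiplying $\sum_{x\ssin L(\k)} 2^{-(x+r_{\k}(x))}$. The global factor $1344$ reflects the numbers of cells in each relevant dimension of $X$.

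The main obstacle will be the third step: matching the combinatorial structure of the finite-matrix contributions coming from Theorem~\ref{thm:compu-tational-tool} with the orbit decomposition of $\langle 2\rangle$ acting on $\Z/\chr(\k)$. Because $L(\k)$, $o_{\k}(2)$, and $r_{\k}$ all depend discontinuously on $\chr(\k)$, the construction of $T$ must be arithmetic enough to see these number-theoretic invariants, yet uniform enough that a \emph{single} CW-complex works for every field of odd characteristic simultaneously. Carrying this out will require a delicate bookkeeping of contributions indexed by divisors of $\chr(\k)\sscdot o_{\k}(2)$, and verifying that the ``spurious'' contributions (from characters whose $T$-defect is nonzero for unintended reasons) all cancel; this is where I expect the bulk of the technical work to lie.
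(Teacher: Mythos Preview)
Your overall strategy---apply Theorem~\ref{thm:compu-tational-tool} to an explicit lamplighter-type element, evaluate the resulting series, and then pass to a CW-complex via Proposition~\ref{prop-eck}---is the paper's approach. But several concrete points are wrong or missing.

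First, the base group must be built from $\ZZ_2$, not a general $\Z/N$. Theorem~\ref{thm:compu-tational-tool} requires $A$ to be a direct sum of copies of $\ZZ_2$: this is precisely what ensures that every character of $A$ takes values in $\{-1,1\}\subset\k$ for \emph{every} field $\k$ with $\chr(\k)\neq 2$, so that the Pontryagin-duality embedding $\k[A]\into\fun(X,\k)$ exists uniformly in $\k$. With $\Z/N$ for $N>2$ you would need $N$-th roots of unity in $\k$, which are not available in general.

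Second, the factor $1344$ is not a cell count. The paper works in $\Ga\ltimes A$ with $\Ga=\ZZ\times\aut(\ZZ_2^3)$ and $A=(\oplus_\Z\ZZ_2)\times\ZZ_2^3$; this group is isomorphic to $(\ZZ_2\wr\ZZ)\times(\ZZ_2^3\rtimes\aut(\ZZ_2^3))$, and $1344=|\ZZ_2^3\rtimes\aut(\ZZ_2^3)|$. The factor appears when one strips off the finite direct factor via Lemma~\ref{lm-appendix-matrices}.

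Third, you omit the step that produces a \emph{finite} complex whose \emph{universal} cover has the stated gradient. The lamplighter group $\ZZ_2\wr\ZZ$ is not finitely presented; one embeds it, via \cite{Grigorchuk_Linnel_Schick_Zuk}, into a finitely presented $2$-step solvable group $\La$, and Lemma~\ref{lm-appendix-subgroups} transfers the kernel-gradient computation to $\k[\La]$. Only then does Proposition~\ref{prop-eck}(ii) give the finite $X$ with $\pi_1(X)=\La$.

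Finally, there is no tailoring and no cancellation of spurious terms: the element $T$ and its exhausting family of $T$-graphs are quoted directly from \cite{arXiv:grabowski-2010-2}. The $T$-graphs come in families $U$, $G_1(k)$, $G_2(l)$, $G_3(k,l)$ with explicit measures, and Lemma~\ref{lem-dims} shows their adjacency operators over $\k$ have kernel dimension $0$, $1$ or $2$ according to the single congruence $l\equiv 2^{k-1}-1\pmod{\chr(\k)}$ (resp.\ $2^{k-1}\equiv 1$). Summing the resulting geometric series is a direct computation; your pieces (a), (b), (c) correspond to the contributions of $U$ together with $G_2$ and the constant part of $G_3$, of $G_1$, and of the remaining part of $G_3$, respectively.
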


Since $L(\k)$ is a finite set, we see that \eqref{eq-rational-gradient} is a rational number. In \cite{arXiv:grabowski-2010-2} it is computed that the third $\Q$-homology gradient of $X$ is equal to the irrational number $1344\cdot (\frac{47}{64} +\frac{1}{64}\sum_{k=1}^\infty2^{-(k+2^{k})})$. It is not difficult to see that the numbers \eqref{eq-rational-gradient} converge to that number when  $\chr(\k)\to \infty$. In particular, the expression \eqref{eq-rational-gradient} represents infinitely many different numbers when $\k$ varies over different fields.

\begin{remarks}
\begin{enumerate}[(i),wide,partopsep=5pt, itemsep=5pt]

\item In analogy with the homology of a finite CW-complex, it has been conjectured by Andreas Thom that the $\k$-homology gradients stabilize as $\chr(\k)\to \infty$. Theorem~\ref{thm-intro-positive} disproves that conjecture.

\item Corresponding results for the case $\chr(\k)=2$ are shown in the G\"ottingen doctoral thesis \cite{NeumannDiss} of Johannes Neumann.

\item The results similar to the three theorems above hold also for fourth and higher $\k$-homology gradient. Furthermore, it is likely that with a bit more work one could find a finite CW-complex whose universal covering has an irrational second $\k$-homology gradient, for any field $\k$ such that $\chr(\k)\neq 2$. However, passing to the first $\k$-homology gradient is impossible with the methods used in this article. In particular, it is an open question whether there exist a field $\k$ and a finite CW-complex $X$ such that the first $\k$-homology gradient of the universal cover of $X$ is an irrational number. In other words, it is not known whether the $\k$-homology gradient of a finitely presented group can be irrational (see \cite{MR3100998} for results on $\k$-homology gradients of groups which are not finitely presented).
\end{enumerate}
\end{remarks}


\section{Motivation - characteristic $0$}\label{sec-motivation}

The computational tool which we will state in Theorem~\ref{thm:compu-tational-tool} works  over an arbitrary field $\k$ whose characteristic is not equal to $2$. To motivate it, we start by presenting a basic version in characteristic $0$. Various variants of it were used for spectral computations for example in \cite{MR1436310}, \cite{Dicks_Schick}, \cite{Lehner_Neuhauser_Woess:On_the_spectrum_of},  \cite{arxiv:austin-2009}, and \cite{arxiv:pichot_schick_zuk-2010}. A very general version is presented in \cite[Section 2]{grabowski-on-turing-dynamical-systems-and-the-atiyah-problem}.

Recall that when $\La$ is a countable group and $T$ is an element of the group ring $\C[\La]$, then $T$  induces a bounded operator on the Hilbert space $l^2(\La)$ of square-summable functions on $\La$. Let  $\zeta_e\in l^2(\La)$ be the indicator function of the neutral element $e\ssin \La$. The von Neumann dimension of the kernel of $T$ is defined as 
$$
\dimvn\ker T := \langle P\zeta_e,\zeta_e\rangle,
$$
where $P\colon l^2(\La)\to l^2(\La)$ is the orthogonal projection onto the kernel of $T$. 

Let $A$ be a countable abelian group, and let $X$ be its Pontryagin dual, i.e.~the set of group homomorphisms $A\to U(1)$, where $U(1)$ is the multiplicative group of the complex numbers on the unit circle. Let $\mu$ be the Haar measure on $X$, normalized so that $\mu(X)=1$. The Pontryagin duality induces an embedding 
\beq\label{eq-fourier-trans}
\wh{\ }\,\,\colon\C[A]\into L^\infty (X).
\eeq
We refer to \cite{Folland:A_course_in_abstract_harmonic_analysis} for more information on the Pontryagin duality.

Let $\Ga$ be a group and let $\Ga\actson A$ be an action by group automorphisms. The outcome of the
action of $\ga\ssin \Ga$ on $a\ssin A$ is denoted by $\ga.a$. The central dot
$\cdot$ is reserved for the group multiplication. The dual action
$\Ga\actson X$ is the unique continuous action which makes the embedding \eqref{eq-fourier-trans} equivariant. It is also denoted with the lower dot. To simplify the discussion, in this motivational section we assume that the dual action $\Ga\actson X$ is free on a subset of full measure. 

Let $\ga_1,\ldots, \ga_m$ be distinct elements of $\Ga$, and let $\chi_1,\ldots, \chi_m\ssin \C[A]$ be such that the functions $\wh\chi_1,\ldots, \wh\chi_m$ are indicator functions of some sets $X_1,\ldots, X_m\subset X$. Let $T \in \C[\Ga\ltimes A]$  be equal to the sum $\sum_{i=1}^m \ga_i \chi_i$.

Consider the oriented graph $\cal G$ defined as follows. The set of vertices of $\cal G$ is $X$, and there is an edge from $x$ to $y$ if for some $i$ we have $x\in X_i$ and $\ga_i.x=y$. 

Let $\cal G(x)$ be the connected component of $x$ in $\cal G$. Let $l^2(\cal G(x))$ be the Hilbert space spanned by the vertices of $\cal G(x)$. Let $M(x) \colon l^2(\cal G(x)) \to l^2(\cal G(x))$ be the adjacency operator on $\cal G(x)$, i.e.~the entry of the matrix of $M(x)$ corresponding to a pair of vertices $(v, w)$ is equal to $1$ if there is an edge from $v$ to $w$, and $0$ otherwise.

\begin{proposition}\label{prop-tool-free}
Let us assume that the set $\{x\ssin X\colon \cal G(x)\text{ is a finite graph}\}$ is of measure $1$. Then 
$$
\dimvn\ker (T) = \int_X \frac{\dim\ker M (x)}{|\cal G(x)|}\,d\mu(x).
$$\qed
\end{proposition}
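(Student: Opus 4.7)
The plan is to pull the computation of $\dimvn\ker T$ back, via Pontryagin duality on $A$, to a fiberwise computation on the bundle of Hilbert spaces $\{l^2(\Ga\cdot x)\}_{x\in X}$, and then to average the resulting pointwise projections over the finite connected components of $\cal G$. Using the Fourier embedding $\wh{\ }\colon\C[A]\into L^\infty(X)$, the von Neumann algebra generated by $\Ga\ltimes A$ acting on $l^2(\Ga\ltimes A)$ is identified with the crossed product $L^\infty(X)\rtimes\Ga$ for the dual action; under this identification $T = \sum_{i=1}^m \ga_i\chi_i$ corresponds to $\sum_{i=1}^m u_{\ga_i}\mathbf{1}_{X_i}$, and the canonical trace takes the form $\tau(S) = \int_X \langle S_x\delta_x,\delta_x\rangle\,d\mu(x)$. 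Freeness of the dual action on a conull set then lets us realise this crossed product concretely as operators on the measurable field $\{l^2(\Ga\cdot x)\}_{x\in X}$, each canonically identified with $l^2(\Ga)$ once a basepoint is chosen.

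Next, I would unpack the fiber operator $T_x$: on a basis vector $\delta_y$ with $y\in\Ga\cdot x$, a direct computation gives $T_x\delta_y = \sum_{i\colon y\in X_i}\delta_{\ga_i.y}$, which is exactly the adjacency operator of the subgraph of $\cal G$ spanned by the orbit $\Ga\cdot x$. Since no edge of $\cal G$ leaves a connected component, $T_x$ preserves the orthogonal decomposition $l^2(\Ga\cdot x) = \bigoplus_{\cal G(y)\subseteq \Ga\cdot x} l^2(\cal G(y))$ and restricts on each summand $l^2(\cal G(y))$ to the finite matrix $M(y)$. Consequently the projection $P_x$ onto $\ker T_x$ is the orthogonal sum of the finite-rank projections $Q_y$ onto $\ker M(y)$, so in particular $\langle P_x\delta_x,\delta_x\rangle = \langle Q_x\delta_x,\delta_x\rangle$, and the trace formula above gives
\[
\dimvn\ker T = \tau(P_{\ker T}) = \int_X \langle Q_x\delta_x,\delta_x\rangle\,d\mu(x).
\]

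Finally, I would replace the integrand by the orbit-average $\dim\ker M(x)/|\cal G(x)|$. Partition $X$ into the measurable sets $X_n = \{x\colon |\cal G(x)| = n\}$, whose union is conull by hypothesis. On each $X_n$ the relation ``belonging to the same connected component of $\cal G$'' is a Borel equivalence relation with classes of constant cardinality $n$, and it is $\mu$-preserving because $\mu$ is invariant under the dual $\Ga$-action and each component is a finite $\Ga$-pseudo-orbit. Disintegrating $\mu$ along this relation, together with the identity
\[
\sum_{y\in\cal G(x)}\langle Q_y\delta_y,\delta_y\rangle = \tr\bigl(Q_x|_{l^2(\cal G(x))}\bigr) = \dim\ker M(x),
\]
yields $\int_{X_n}\langle Q_x\delta_x,\delta_x\rangle\,d\mu(x) = \int_{X_n}\frac{\dim\ker M(x)}{|\cal G(x)|}\,d\mu(x)$, and summing over $n$ gives the proposition. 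The delicate point, and the one I expect to require the most care, is precisely this last disintegration/mass-transport step: it rests on producing a measurable transversal for the finite equivalence relation on each $X_n$ and on the $\Ga$-invariance of the Haar measure $\mu$, both of which are standard once one notes that $x\mapsto \cal G(x)$ is Borel-measurable, thanks to the finiteness of the components and of the defining data $(\ga_i,X_i)_{i=1}^m$.
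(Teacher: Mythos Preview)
The paper does not prove this proposition: it is stated in the motivational Section~\ref{sec-motivation} with a bare \qed, and the reader is referred to the literature (in particular \cite[Section~2]{grabowski-on-turing-dynamical-systems-and-the-atiyah-problem}) for a general version. So there is no ``paper's own proof'' to compare against.

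That said, your sketch is the standard and correct route. The three ingredients---the Fourier identification $l^2(\Ga\ltimes A)\cong \int_X^\oplus l^2(\Ga)\,d\mu$ turning $T$ into a measurable field of adjacency-type operators, the block decomposition of each fiber along connected components of $\cal G$, and the mass-transport/averaging step replacing $\langle Q_x\delta_x,\delta_x\rangle$ by $\dim\ker M(x)/|\cal G(x)|$---are exactly what is needed, and your identification of the last step as the one requiring the most bookkeeping is apt. Two small points worth tightening if you write this out in full: first, be explicit about the convention for how $\ga_i$ acts in the semidirect product so that the fiber computation $T_x\delta_y=\sum_{i:y\in X_i}\delta_{\ga_i.y}$ comes out with the edges oriented as in the definition of $\cal G$; second, in the averaging step you do not actually need a measurable transversal---it suffices that the equivalence relation ``same component of $\cal G$'' is generated by the restrictions $\ga_i|_{X_i}$ of measure-preserving maps, so that for any Borel $f\ge 0$ constant on components one has $\int_X f(x)\,d\mu = \int_X \frac{1}{|\cal G(x)|}\sum_{y\in\cal G(x)} f(y)\,d\mu$, which is precisely the identity you use with $f(x)=\langle Q_x\delta_x,\delta_x\rangle$.
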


The utility of this proposition comes from the fact that among the finite graphs $\cal G(x)$, $x\ssin X$, there are only countably many different ones, and they can be often computed explicitly. In such cases the above integral  decomposes as an explicit countable sum of kernel dimensions of \textit{finite-dimensional} matrices.

Let us give an example studied  in
\cite{Grigorchuk_Zuk2001} (by different methods) and in \cite{Dicks_Schick}. Let $\ZZ=\langle t\rangle$ be the infinite cyclic group and let $\ZZ_2=\langle u\rangle$ be the cyclic group of order $2$. The wreath product of $\ZZ_2$ and $\ZZ$ is denoted by $\ZZ_2\wr \ZZ$, thus in particular $\ZZ_2\wr \ZZ$ is a group of the form $\Ga\ltimes A$, where $\Ga:=\ZZ$ and $A := \oplus_\Z \ZZ_2$. Let $T\in \Z[\ZZ_2\wr \ZZ]$ be defined as $T:= t+t^{-1} + tu + ut^{-1}$. The whole procedure of applying Proposition~\ref{prop-tool-free} to compute $\dimvn\ker (T)$ is worked out in \cite[Section
3]{grabowski-on-turing-dynamical-systems-and-the-atiyah-problem}. The outcome
is as follows.

Let $G(k)$ be the oriented graph 
$$
\bullet\leftrightarrows \bullet \leftrightarrows \cdots \leftrightarrows \bullet \leftrightarrows \bullet \quad \text{(}k\text{ vertices)},
$$
and let $M(k)$ be the adjacency operator on $G(k)$.

The set of those points $x\ssin X$ for which $\cal G(x)$ is isomorphic to $G(k)$ has measure $\frac{k}{2^{k+1}}$. It is not difficult to check that $M(k)$ has $1$-dimensional kernel for odd $k$ and $0$-dimensional kernel otherwise. As such, Proposition \ref{prop-tool-free} allows us to compute $\dimvn\ker(T)$:
\beq\label{eq-gz-operator}
\dimvn\ker (T) = \sum_{k=1}^\infty \frac{k}{2^{k+1}} \frac{\dim\ker M(k)}{k} = \sum_{k=0}^\infty \frac{1}{2^{2k+1}} = \frac{1}{3}.
\eeq

Let $\k$ be a field  and let $\rho(T)\ssin \k[\Ga\ltimes A]$ be the group ring element which arises from $T$ by reducing the coefficients modulo $\chr(\k)$. In the next section we will explain how to define the number $\dim_{\k[\Ga\ltimes A]} \ker \rho(T)$, in such a way that $\dim_{\Q[\Ga\ltimes A]} \ker T = \dimvn\ker T$. As such, it is reasonable to expect that $\dim_{\k[\Ga\ltimes A]} \ker \rho(T)$ should be equal to
\beq\label{eq-mk}
\sum_{k=1}^\infty \frac{k}{2^{k+1}} \frac{\dim_\k\ker M(k;\k)}{k},
\eeq
where $M(k;\k)$ is the adjacency operator on $G(k)$ over the field $\k$. 

The main aim of the next two sections is to state and prove Theorem \ref{thm:compu-tational-tool} which is an ``arbitrary characteristic'' version of Proposition \ref{prop-tool-free}. A particular corollary of Theorem~\ref{thm:compu-tational-tool} is that, at least when $\chr(\k)\neq 2$, we have that $\dim_{\k[\Ga\ltimes A]} \ker \rho(T)$ is indeed equal to the expression~\eqref{eq-mk}.

\section{Definitions and preliminaries}\label{sec-definition}

The set $\{0,1,2,\ldots\}$ of natural numbers is denoted by $\N$. Given $n\ssin \N$, the set $\{1,2,\ldots, n\}$ is denoted by $[n]$, and the set $\{1,2,\ldots\}$ is denoted by $\N_+$. Given an action $\La\actson S$ of a group $\La$ on a set $S$, we denote the result of the action of $\la\ssin \La$ on $s\ssin S$ by $\la.s$. Similarly, for $\Phi\subset \La$ and $F\subset S$ we denote $\Phi.F :=\{\phi.s\colon \phi\ssin \Phi, s\ssin F\} \subset S$.

Given a set $S$ and a ring $R$ we let $R[S]$ be the abelian group of formal $R$-linear combinations of elements of $S$.  If $\La$ is a group then the \textit{group ring of $\La$ over $R$} is the ring whose additive group is $R[\La]$ and whose multiplication is the linear extension of the multiplication in $\La$. It will be also denoted by $R[\La]$. 

For $\Phi{\,\subset\,} \La$  we let $\pi_{\Phi}$ be the  projection $R[\La] \to R[\Phi]$ along the space $R[\La{\smallsetminus} \Phi]$. We use the same symbol $\pi_\Phi$ also for the projection $R[\La]^n \to R[\Phi]^n$. 

Given $T\ssin \mat({a{\times} b},R[\La])$ and $\Phi\subset \La$, let  
$$
	T_\Phi := \pi_\Phi\circ T \colon R[\Phi]^b \to R[\Phi]^a
$$
be the compression of $T$ to $R[\Phi]^b$. 

For $\Phi, \Si \subset \La$ we define $\Phi\sscdot \Si := \{\phi\sscdot \si\colon \phi \ssin \Phi, \si\ssin \Si\}$ and $\partial_\Si \Phi := \Si\sscdot \Phi \setminus \Phi$. 
A sequence $\Phi_1,\Phi_2,\ldots \subset \La$ of finite sets is called a \textit{F{\o}lner sequence} if for every finite subset $\Si\subset \La$ we have that
$$
    \frac{|\partial_\Si \Phi_i|}{|\Phi_i|} \xrightarrow{i\to\infty} 0.
$$
Recall that $\La$ is \textit{amenable} if there exists a F{\o}lner sequence in $\La$.

 \begin{definition}
   If $\La$ is an amenable group and $\k$ is a field then the \textit{kernel
     gradient} of $T\ssin \mat({a{\times} b},\k[\La]) $ is defined as
   \beq\label{eq-def-of-dimension} \dim_{\k[\La]} \ker T := \lim_{i\to\infty}
   \frac{\dim_\k \ker T_{\Phi_i}}{|\Phi_i|}.  \eeq where
   $\Phi_{1},\Phi_2,\,\ldots\,$ is a F{\o}lner sequence in $\La$.
 \end{definition}
The existence of this limit and its
independence of the choice of a  F{\o}lner sequence follows from the
Ornstein-Weiss lemma \cite{MR910005} (see \cite{MR2369443} for a detailed
discussion of that lemma). 

\begin{remark}
In this article the symbol $\dim_{\k[\La]} \ker$ should be understood as a single operator, not a composition of "$\dim_{\k[\La]}$" and "$\ker$". However, as shown in~\cite{NeumannDiss}, it is possible to develop a satisfactory dimension theory for very general modules over $\k[\Lambda]$. 
\end{remark}

\newcommand{\HG}{\operatorname{HG}}
Let $\si\colon \wt S \to S$ be a normal covering of CW-complexes such that $S$ is finite, and let $\La$ be the deck transformation group of $\si$. If $\La$ is amenable then for $i\ssin \N$ we define the \textit{$i$-th $\k$-homology gradient $\HG_i(\si; \k)$ } of $\si$ as follows. Let $\Phi_1,\Phi_2,\,\ldots\,$ be a F\o lner sequence in $\La$. Let $S_0\subset\wt S$ be a CW-subcomplex which is a fundamental domain for the action $\La\,{\actson}\, \wt S$. We let 
$$
\HG_i(\si; \k) := \lim_{k\to\infty} \frac{\dim_\k H_i(\Phi_k.S_0;\k)}{|\Phi_i|}.
$$
The independence from the choice of the fundamental domain $S_0$ and the F\o lner sequence $\Phi_1,\Phi_2,\ldots$ follows again from the Ornstein-Weiss lemma. Let us recall two well-known facts.

\begin{proposition}\label{prop-eck}
\begin{enumerate}[(i),wide]
\item Let $\si\colon \wt S \to S$ be a normal covering of CW-complexes such that $S$ is finite, and let $\La$ be the deck transformation group of $\si$. Let us suppose that $\La$ is amenable and residually finite and let $\La_1\supset \La_2\supset \,\ldots\,$ be a descending sequence of finite-index normal subgroups of $\La$ such that $\bigcap_{i=1}^\infty \La_i = \{1\}$. Finally, let $S_1:=\La_1\backslash \wt S,S_2:=\La_2\backslash \wt S,\ldots$ be the corresponding sequence of normal covers of $S$. Then we have
\beq
    \HG_i(\si; \k) = \lim_{k\to \infty} \frac{\dim_\k H_i(S_k;\k)}{[\La:\La_k]}.
\eeq
\item Let $\La$ be a countable amenable group, and let $T\ssin \Z[\La]$. There exists a connected finite CW-complex $S$ and a covering $\si\colon \wt S\to S$ whose deck transformation group is equal to $\La$, and such that for any field $\k$ we have 
$$
    \HG_3(\si; \k)  = \dim_{\k[\La]} \ker \rho(T),
$$
where $\rho(T) \ssin \k[\La]$ arises from $T$ by reducing the coefficient modulo $\chr(\k)$. Furthermore, if $\La$ is finitely presented then $\si$ can be taken to be the universal covering of $S$.
\end{enumerate}
\end{proposition}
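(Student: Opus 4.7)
Both assertions are standard. For part (i), the plan is to exhibit a F{\o}lner sequence coming from the chain $\La_k$ and then to compare the cellular chain complex of $S_k = \La_k \backslash \widetilde S$ with that of a $\La_k$-fundamental domain inside $\widetilde S$. First I would choose, for each $k$, a set $R_k \subset \La$ of representatives of $\La/\La_k$, so that $|R_k| = [\La:\La_k]$ and $R_k.S_0$ is a fundamental domain for the $\La_k$-action on $\widetilde S$. Amenability together with $\bigcap_k \La_k = \{1\}$ forces $\{R_k\}$ to be F{\o}lner: given any F{\o}lner sequence $F_n \subset \La$ and any finite $\Si \subset \La$, one picks $n$ so that $F_n$ is $\Si$-invariant up to small error and then $k$ so large that $F_n$ injects into $\La/\La_k$; translates of $F_n$ then tile $\La/\La_k$ with vanishingly small overlap, giving the F{\o}lner property for $R_k$. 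The chain complexes $C_*(R_k.S_0;\k)$ and $C_*(S_k;\k)$ agree as graded $\k$-vector spaces, and their differentials coincide except on cells lying within an $S_0$-bounded neighbourhood of $\partial R_k$; hence cycle and boundary ranks differ by $O(|\partial_\Si R_k|)$, and dividing by $|R_k|$ and taking $k\to\infty$ yields the L\"uck-type formula.

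For part (ii), the plan is to engineer the cellular chain complex of $\widetilde S$ so that $H_3(\widetilde S;\k)$ realises $\ker \rho(T)$ as a $\k[\La]$-module. Assume first that $\La$ is finitely presented by $\langle g_1,\ldots, g_n \mid r_1, \ldots, r_m\rangle$, let $P$ be the corresponding presentation $2$-complex, and let $\widetilde P$ be its universal cover. I would then attach to $P$: (a) one further $2$-cell $f$ with constant attaching map, contributing a free $\k[\La]$-summand to $C_2(\widetilde S)$ with zero boundary; and (b) one $3$-cell $e$ whose attaching map $S^2 \to P \cup f$ represents $T\cdot [f] \in \pi_2(P \cup f)$, which is possible because $[f]$ generates a free $\Z[\La]$-summand of $\pi_2$. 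Setting $S = P \cup f \cup e$, one obtains $C_3(\widetilde S) = \k[\La]$, $C_4(\widetilde S) = 0$, and the component of $\partial_3$ landing in the $f$-summand of $C_2(\widetilde S)$ is precisely multiplication by $\rho(T)$. Consequently, for any F{\o}lner sequence $\Phi_k$ the subcomplex $\Phi_k.S_0$ has $\dim_\k H_3(\Phi_k.S_0;\k)= \dim_\k \ker (\rho(T)_{\Phi_k})$, and dividing by $|\Phi_k|$ gives $\HG_3(\sigma;\k) = \dim_{\k[\La]}\ker \rho(T)$ directly from the definition of the kernel gradient. In the merely finitely generated case, the same construction is performed starting from a wedge of $n$ circles with $\pi_1 = F_n \twoheadrightarrow \La$, taking $\sigma$ to be the associated (no longer universal) regular cover.

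The main delicate step is the F{\o}lner-sequence lemma in part (i); it is a classical consequence of the Ornstein--Weiss machinery, but has to be argued carefully in the residually-finite-amenable setting. The remaining content is bookkeeping: comparing $S_k$ with $R_k.S_0$ up to a F{\o}lner-negligible boundary in part (i), and verifying that the prescribed attaching maps produce the claimed boundary operator $\rho(T)$ in the covering cellular chain complex in part (ii), where one must take some care with the left/right conventions of the $\k[\La]$-action (the dimensions are invariant under the standard involution, so the choice is immaterial for the final equality).
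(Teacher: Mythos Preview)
The paper's proof consists entirely of two citations (Elek--Szab\'o for (i), Eckmann for (ii)), so your sketch is already more detailed than what the paper offers and follows exactly the standard arguments those references contain. One small correction in (i): arbitrary coset representatives $R_k$ need not be F{\o}lner; rather, amenability together with $\bigcap_k \La_k=\{1\}$ lets one \emph{choose} such representatives F{\o}lner via the Ornstein--Weiss quasi-tiling you describe, after which the boundary-layer comparison of $C_*(S_k)$ with $C_*(R_k.S_0)$ goes through as you say. Part (ii) is precisely the Eckmann realisation of a prescribed group-ring element as a third cellular boundary map, and your remark on the left/right convention (resolved by the involution, since rank is transpose-invariant in the F{\o}lner limit) is correct.
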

\begin{proof}
The first item follows from~\cite{MR2823074}. The second item follows from~\cite[Section 3.10]{Eckmann_intro}.
\end{proof}

Let us also recall a standard lemma on F{\o}lner sequences in semi-direct products of amenable groups.

\begin{lemma}\label{lem-semi} Let $\Ga$ and $A$ be countable amenable groups, and let $\Ga \actson A$ be an action by group automorphisms. Let $\Phi_1,\Phi_2,\ldots$ be a F{\o}lner sequence in $\Ga$ and let $F_1,F_2,\ldots$ be a F{\o}lner sequence in $A$. Then there exists a subsequence $F_{i(1)},F_{i(2)},\ldots$ such that 
the sequence $\Phi_j\sscdot F_{i(j)}\subset \Ga\ltimes A$ is a F{\o}lner sequence in $\Ga\ltimes A$.
\end{lemma}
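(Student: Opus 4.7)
The plan is to compute explicitly when a product $\sigma\cdot\phi f$ with $\sigma\in\Sigma$ and $\phi f\in\Phi_j\cdot F_{i(j)}$ lies outside $\Phi_j\cdot F_{i(j)}$, and to bound the two possible failure modes separately. Identifying elements of $\Ga\ltimes A$ uniquely as products $\ga f$ with $\ga\in\Ga$ and $f\in A$, and using the relation $a\phi=\phi\cdot(\phi^{-1}.a)$, one computes
$$
(\ga_0 a_0)\cdot(\phi f) \;=\; (\ga_0\phi)\cdot\bigl((\phi^{-1}.a_0)\,f\bigr).
$$
Writing $\Sigma_\Ga\subset\Ga$ and $\Sigma_A\subset A$ for the finite sets of $\Ga$- and $A$-components of the elements of $\Sigma$, the right hand side lies outside $\Phi_j\cdot F_{i(j)}$ only if $\ga_0\phi\notin\Phi_j$ or $(\phi^{-1}.a_0)\,f\notin F_{i(j)}$.

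For fixed $\sigma$ and $f$ the first possibility occurs for at most $|\partial_{\Sigma_\Ga}\Phi_j|$ choices of $\phi$; for fixed $\sigma$ and $\phi$ the second occurs for at most $|\partial_{\Om_j}F_{i(j)}|$ choices of $f$, where $\Om_j:=\Phi_j^{-1}.\Sigma_A\subset A$ is a finite set. Since $\phi f=\phi' f'$ in $\Ga\ltimes A$ forces $\phi=\phi'$ and $f=f'$, we have $|\Phi_j\cdot F_{i(j)}|=|\Phi_j|\cdot|F_{i(j)}|$, so dividing the resulting upper bound for $|\partial_\Sigma(\Phi_j\cdot F_{i(j)})|$ yields
$$
\frac{|\partial_\Sigma(\Phi_j\cdot F_{i(j)})|}{|\Phi_j\cdot F_{i(j)}|} \;\le\; |\Sigma|\cdot\left(\frac{|\partial_{\Sigma_\Ga}\Phi_j|}{|\Phi_j|} + \frac{|\partial_{\Om_j}F_{i(j)}|}{|F_{i(j)}|}\right).
$$
The first summand goes to $0$ as $j\to\infty$ since $\{\Phi_j\}$ is F\o lner in $\Ga$. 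The main obstacle is that the set $\Om_j=\Phi_j^{-1}.\Sigma_A$ depends on $j$ and in general grows without bound, so one cannot dispense with the hypothesis that a subsequence of $\{F_i\}$ be chosen, and no single a priori choice handles all $\Sigma$ simultaneously.

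This forces a diagonal argument. Since $\Ga\ltimes A$ is countable, fix an increasing exhaustion $\Sigma^{(1)}\subset\Sigma^{(2)}\subset\cdots$ of $\Ga\ltimes A$ by finite subsets, let $\Sigma_A^{(j)}$ denote the set of $A$-components of elements of $\Sigma^{(j)}$, and set $\Om_j^{\max}:=\Phi_j^{-1}.\Sigma_A^{(j)}$. For each fixed $j$ this is a finite subset of $A$, so by the F\o lner property of $\{F_i\}$ we may choose $i(j)>i(j-1)$ with $|\partial_{\Om_j^{\max}}F_{i(j)}|/|F_{i(j)}|<1/j$. Any fixed finite $\Sigma\subset\Ga\ltimes A$ is eventually contained in $\Sigma^{(j)}$, whence $\Om_j\subset\Om_j^{\max}$ and $|\partial_{\Om_j}F_{i(j)}|/|F_{i(j)}|\le 1/j$ for all sufficiently large $j$; combined with the displayed inequality this gives $|\partial_\Sigma(\Phi_j\cdot F_{i(j)})|/|\Phi_j\cdot F_{i(j)}|\to 0$, proving that $\{\Phi_j\cdot F_{i(j)}\}$ is a F\o lner sequence in $\Ga\ltimes A$.
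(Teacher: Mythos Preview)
Your proof is correct and follows essentially the same strategy as the paper's: the key computation $(\ga_0 a_0)\cdot(\phi f)=(\ga_0\phi)\cdot((\phi^{-1}.a_0)f)$ is identical, and your diagonal choice of $i(j)$ via an exhaustion is exactly what the paper's ``after passing to a subsequence we may assume \eqref{eq-z}'' encodes. The only organizational difference is that you treat a general finite $\Sigma\subset\Ga\ltimes A$ in one step, whereas the paper reduces to the two separate cases $\Sigma\subset\Ga$ and $\Sigma\subset A$; your explicit bound with the factor $|\Sigma|$ is a harmless overcount that makes the argument slightly more transparent.
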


\begin{proof}
After passing to a subsequence of $F_i$ we may assume that for every finite set $S\subset A$ we have 
\beq\label{eq-z}
\frac{|\partial_{\Phi_j^{-1}.S}\,\, F_j|}{|F_j|}\,\, \xrightarrow{j\to \infty}\,\, 0.
\eeq
We will show that with this assumption the sequence  $\Phi_j\sscdot F_{j}$ is a F{\o}lner sequence.

First, it is easy to see  that it suffices to check that for all finite subsets $\Si$ of $\Ga$ and all finite subsets $\Si$ of $A$ the sequence
\beq\label{eq-f}
    \frac{|\partial_\Si \,\,\Phi_j\sscdot F_j| }{|\Phi_j \sscdot F_j|}
\eeq
converges to $0$. 

For $\Si\subset \Ga$, the convergence to $0$ of the sequence~\eqref{eq-f} follows directly from the fact that $\Phi_j$ is a F\o lner sequence in $\Ga$.

For $\Si\subset A$, let us fix $\eps\ssin(0,1)$, and let $j\in \N$ be such that $|(\Phi_j^{-1}.\Si)\sscdot F_j\setminus F_j|  \le \eps \sscdot |F_j|$. For $s\ssin \Si$, $\phi\in \Phi_j$ and $a\in F_j$ we have
$$
s\sscdot \phi\sscdot a = \phi\sscdot (\phi^{-1}.s)\sscdot a,
$$
and so the number of elements in $\Si\sscdot\Phi_j\sscdot F_j$ which are outside of $\Phi_j\sscdot F_j$ is at most $|\Phi_j|\sscdot \eps\sscdot |F_j|$. This easily implies the convergence of~\eqref{eq-f} to zero and finishes the proof.
\end{proof}

\subsection{Basic properties of kernel gradients}\label{subsec-basicprop}\mbox{}

The following basic properties of kernel gradients will be used in Section~\ref{sec-app}, in the proofs of Theorems~\ref{thm:intro-finitely-generated}, \ref{thm:intro-lamplighter3},  and~\ref{thm-intro-positive}. All of them are standard and we only give sketches of the proofs.

\begin{lemma}\label{lm-appendix-subgroups} 
Let $\Ga$ be an amenable group, let $\La \subset \Ga$ be a subgroup,  and let $T\ssin \mat(m\times n,\k[\La])$. Then $\dim_{\k[\La]}\ker T = \dim_{\k[\Ga]}\ker T$.
\end{lemma}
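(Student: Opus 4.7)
The plan is to exploit the right-coset decomposition of $\Ga$ with respect to $\La$ to split any compression $T_\Psi$ ($\Psi\subset \Ga$) into a direct sum of compressions $T_{\Psi_\alpha}$ over $\La$, and then use the Ornstein--Weiss machinery to compare Følner averages in $\Ga$ with those in $\La$. Note first that subgroups of amenable groups are amenable, so both quantities in the statement are well-defined.

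Fix a set of right coset representatives $\{g_\alpha\}$, so that $\Ga=\bigsqcup_\alpha \La g_\alpha$. For a finite $\Psi\subset \Ga$ set $\Psi_\alpha:=\{\lambda\in\La:\lambda g_\alpha\in \Psi\}\subset \La$, so that $\Psi=\bigsqcup_\alpha \Psi_\alpha g_\alpha$ and $\k[\Psi]=\bigoplus_\alpha \k[\Psi_\alpha]\cdot g_\alpha$. Since $T$ has entries in $\k[\La]$ acting on the left, and left multiplication by $\La$ preserves each right coset $\La g_\alpha$, the operator $T$ (viewed on $\k[\Ga]^b$) preserves the decomposition $\k[\Ga]^b=\bigoplus_\alpha \k[\La]^b g_\alpha$, acting on each summand as $T$ itself after identifying $\k[\La]^b g_\alpha\cong \k[\La]^b$ via right multiplication by $g_\alpha^{-1}$. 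Consequently the compression decomposes as $T_\Psi=\bigoplus_\alpha T_{\Psi_\alpha}$, and therefore
\[
\dim_\k \ker T_\Psi \;=\; \sum_\alpha \dim_\k \ker T_{\Psi_\alpha}.
\]
An analogous observation, again because $\Si\subset\La$ preserves right cosets, gives
\[
|\partial_\Si \Psi| \;=\; \sum_\alpha |\partial_\Si \Psi_\alpha|.
\]

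Now fix a Følner sequence $\Psi_1,\Psi_2,\ldots$ in $\Ga$; by definition, $\dim_{\k[\Ga]}\ker T$ is the limit of $\dim_\k \ker T_{\Psi_i}/|\Psi_i|$. Given $\eps>0$, use the Ornstein--Weiss characterization of the kernel gradient over $\La$ to pick a finite $\Si\subset \La$ and $\delta>0$ such that every finite $\Phi\subset\La$ with $|\partial_\Si\Phi|<\delta|\Phi|$ satisfies $|\dim_\k \ker T_\Phi/|\Phi|-\dim_{\k[\La]}\ker T|<\eps$. For each $i$ split the indices $\alpha$ into the \emph{good} ones (for which $|\partial_\Si \Psi_{i,\alpha}|<\delta|\Psi_{i,\alpha}|$) and the \emph{bad} ones. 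The boundary identity above forces
\[
\sum_{\alpha\text{ bad}}|\Psi_{i,\alpha}| \;\le\; \tfrac{1}{\delta}|\partial_\Si \Psi_i| \;=\; o(|\Psi_i|),
\]
because $\Psi_i$ is Følner in $\Ga$.

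Finally, combine these estimates: on good cosets the sum $\sum_{\alpha\text{ good}}\dim_\k\ker T_{\Psi_{i,\alpha}}$ lies within $\eps\sum_{\alpha\text{ good}}|\Psi_{i,\alpha}|\le \eps|\Psi_i|$ of $\dim_{\k[\La]}\ker T \cdot \sum_{\alpha\text{ good}}|\Psi_{i,\alpha}|$; on bad cosets the trivial bound $\dim_\k\ker T_{\Psi_{i,\alpha}}\le b|\Psi_{i,\alpha}|$ contributes at most $b\cdot o(|\Psi_i|)$. Dividing by $|\Psi_i|$ and letting $i\to\infty$ yields $\limsup_i|\dim_\k \ker T_{\Psi_i}/|\Psi_i|-\dim_{\k[\La]}\ker T|\le \eps$, and sending $\eps\to 0$ finishes the proof. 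The main thing to check carefully is the algebraic decomposition $T_\Psi=\bigoplus_\alpha T_{\Psi_\alpha}$—once that and the matching boundary identity are in hand, the rest is a standard Ornstein--Weiss-type ``most cosets are Følner'' argument.
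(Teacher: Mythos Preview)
Your proof is correct and follows essentially the same strategy as the paper: both use the right-coset decomposition $T_\Psi\cong\bigoplus_\alpha T_{\Psi_\alpha}$ together with an Ornstein--Weiss ``most coset-slices are F\o lner in $\La$'' argument. The only cosmetic difference is that the paper first trims the bad cosets out of $\Psi_i$ to obtain a modified F\o lner sequence in which \emph{every} slice is good, whereas you keep the original sequence and bound the bad contribution directly via $\sum_{\text{bad}}|\Psi_{i,\alpha}|\le\delta^{-1}|\partial_\Si\Psi_i|$; both routes rest on the same inputs.
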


\begin{proof}[Sketch of Proof]
Let $\Phi_1,\Phi_2,\ldots$ be a F\o lner sequence in $\Ga$ and let $\ga_1,\ga_2,\ldots$ be right coset representatives of $\La$ in $\Ga$. We start by showing how to obtain a new F\o lner sequence $\Psi_1,\Psi_2,\ldots$ in $\Ga$ with the property that for any choice of indices $i(1),i(2),i(3),\ldots$ such that the sets $\Psi_1\cap \La\sscdot\ga_{i(1)}, \Psi_2 \cap \La\sscdot\ga_{i(2)},\ldots\,$ are all non-empty, we have that the sequence $\Psi_1\sscdot\ga_{i(1)}^{-1} \,\cap \La,\,\, \Psi_2\sscdot\ga_{i(2)}^{-1}\,\cap \La\,,\ldots\,$ is a F\o lner sequence in $\La$.

 Let $\Si(1)\subset \Si(2)\subset \ldots$ be an ascending sequence of finite subsets of $\La$ such that $\bigcup_{j=1}^\infty \Si(j) = \La$. Note that since $\Sigma(j)\subset \Lambda$, we see that $\partial_{\Sigma(k)}\Phi_j$ is a disjoint union of the sets $\partial_{\Sigma(k)}\big(\Phi_j\cap \Lambda\sscdot\gamma_i\big)$.

For $j\ssin \N_+$ let $k(j)\ssin \N$ be such that
$$
|\partial_{\Si(j)} \Phi_{k(j)}| <\frac{1}{j^2}\sscdot|\Phi_{k(j)}|.
$$
Furthermore, for $j\ssin \N_+$ let 
$$
I(j):=\{i\ssin \N_+\colon |\partial_{\Si(j)}(\Phi_{k(j)}\sscap \La\sscdot \ga_i)| \,\,\ge\,\, \frac1j\sscdot |\Phi_{k(j)}\sscap \La\sscdot \ga_i|\},
$$
and let $B_j := \Phi_{k(j)}\cap\,\,\bigcup\limits_{i\ssin I(j)}  \La\sscdot \ga_i$.

Then we have  $|\partial_{\Si(j)} B_j |\ge \frac1j |B_j|$. Since $|\partial_{\Si(j)} \Phi_{k(j)}| < \frac{1}{j^2}\sscdot |\Phi_{k(j)}|$, it follows that $|B_j|\le \frac{1}{j}|\Phi_{k(j)}|$. Therefore, the sequence $\Psi_1:= \Phi_{k(1)}\setminus B_1, \Psi_2 := \Phi_{k(2)}\setminus B_2,\ldots$ is also a F\o lner sequence in $\Ga$. Furthermore, for all $i\ssin \N_+$ and all finite sets $\Si\subset \La$ there exists $J\ssin\N_+$ such that for $j>J$ we have
\beq\label{eq-unif}
\frac{|\partial_\Si (\Psi_j \sscdot \ga_i^{-1}\,\,\cap\,\, \La)|}{|\Psi_j\sscdot\ga_i^{-1} \,\,\cap \,\,\La|} <\frac{1}{j}.
\eeq
Let us denote
$$
\Psi(j,i) := \Psi_j \sscdot \ga_i^{-1}\,\,\cap\,\, \La.
$$
Now the inequality~\eqref{eq-unif} implies that for any choice of indices $i(1),i(2),\ldots$ such that the sets   $\Psi(1,i(1)), \Psi(2,i(2)),\ldots$ are non-empty we have that  the sequence $\Psi(1,i(1)), \Psi(2,i(2)),\ldots$ is a F\o lner sequence in $\La$. This, together with the fact that the limit~\eqref{eq-def-of-dimension} exists and is independent of the choice of a F\o lner sequence, implies that for every $\eps>0$ there exists $N\ssin \N$ such that for $j>N$ and all $i\ssin \N_+$ such that $\Psi(j,i)$ is non-empty we have 
\beq\label{eq-1}
\left|\,\frac{\dim_\k \ker T_{\Psi(j,i)}}{|\Psi(j,i)|} - \dim_{\k[\La]} \ker T  \,\right| <\eps.
\eeq

On the other hand, for any $j$ we have 
$$
  T_{\Psi_j} \cong  \bigoplus_{i\colon \Psi(j,i) \neq\emptyset} T_{\Psi(j,i)}.
$$
Thus, by the definition of kernel gradient, for every $\eps>0$ and sufficiently large $j$ we have 
\beq\label{eq-2}
\left|\, \dim_{\k[\Ga]}\ker T \,\,-\,\, \frac{\sum\limits_{i\colon \Psi(j,i) \neq\emptyset} \dim_\k \ker T_{\Psi(j,i)}}{\sum\limits_{i\colon \Psi(j,i) \neq\emptyset} |\Psi(j,i)|} \,\right| <\eps
\eeq

Now the lemma follows from the inequalities \eqref{eq-1} and \eqref{eq-2}, the triangle inequality, and the fact that if $a_1,\ldots, a_m$ are non-negative real numbers and $b_1,\ldots, b_m$ are positive real numbers then the fraction $\frac{a_1+\ldots+a_m}{b_1+\ldots+ b_m}$ lies between the smallest and the largest of the fractions $\frac{a_i}{b_i}$.
\end{proof}

The proof of the following lemma is straightforward using suitable block matrices.
\begin{lemma}\label{lm-appendix-addition} 
Let $\k$ be a field and let $\La$ be an amenable group. Then the set 
$$
\{r\in \R\colon \text{for some matrix $T\!$ over $\k[\La]$ we have $\dim_{\k[\La]}\ker T = r$}\}
$$
of real numbers is closed under addition.\qed
\end{lemma}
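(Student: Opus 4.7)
The plan is to realize the sum by a block-diagonal matrix. Given $r_1, r_2$ in the set, choose matrices $T_1 \in \mat(a_1{\times}b_1, \k[\La])$ and $T_2 \in \mat(a_2{\times}b_2, \k[\La])$ with $\dim_{\k[\La]}\ker T_i = r_i$. I would then consider
$$
T := \begin{pmatrix} T_1 & 0 \\ 0 & T_2 \end{pmatrix} \in \mat\bigl((a_1{+}a_2)\times(b_1{+}b_2), \k[\La]\bigr)
$$
and show directly that $\dim_{\k[\La]}\ker T = r_1 + r_2$.

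The key step is the observation that compression by any finite subset $\Phi \subset \La$ respects the block decomposition. Under the natural identifications $\k[\Phi]^{b_1+b_2} \cong \k[\Phi]^{b_1} \oplus \k[\Phi]^{b_2}$ and $\k[\Phi]^{a_1+a_2} \cong \k[\Phi]^{a_1} \oplus \k[\Phi]^{a_2}$, the map $\pi_\Phi$ acts coordinate-wise, so the compression $T_\Phi = \pi_\Phi \circ T$ factors as the direct sum $(T_1)_\Phi \oplus (T_2)_\Phi$. Consequently,
$$
\ker T_\Phi \,=\, \ker (T_1)_\Phi \,\oplus\, \ker (T_2)_\Phi,
$$
and therefore $\dim_\k \ker T_\Phi = \dim_\k \ker (T_1)_\Phi + \dim_\k \ker (T_2)_\Phi$.

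Finally, fixing any F\o lner sequence $\Phi_1, \Phi_2, \ldots$ in $\La$, dividing the above identity by $|\Phi_i|$ and passing to the limit via the definition~\eqref{eq-def-of-dimension} of kernel gradient yields $\dim_{\k[\La]}\ker T = r_1 + r_2$, which is what is required. I do not expect any genuine obstacle here: the only thing to check is the compatibility of $\pi_\Phi$ with the block-diagonal structure, which is immediate from the coordinate-wise nature of the projection.
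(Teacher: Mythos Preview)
Your proposal is correct and matches the paper's approach exactly: the paper does not give a proof at all, merely noting that it is ``straightforward using suitable block matrices,'' which is precisely what you do. The only detail worth mentioning is that the paper's compression is $\pi_\Phi \circ T$ restricted to $\k[\Phi]^b$, so the domain is already $\k[\Phi]^{b_1}\oplus\k[\Phi]^{b_2}$ and your block decomposition goes through immediately.
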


Given a finite group $\La$ and an enumeration $\la_1,\la_2,\ldots, \la_{|\La|}$ of the elements of $\La$, we denote by $L_\La\colon \La \into \mat(|\La|\sstimes|\La|, \k)$ the map induced by the left regular representation. In other words, for $i,j,k\ssin\{1,\ldots,|\La|\}$ we have that $L_\La(\la_i)$ is a permutation matrix whose $(j,k)$ entry is equal to $1$ if and only if $\la_i\sscdot \la_j = \la_k$. 

For $m,n\ssin\N_+$ we denote with the same symbol the induced map  $L_\La\colon \mat(m \sstimes n,\,\k[\La]) \into \mat( m\sscdot |\La|\sstimes n\sscdot|\La|, \,\k)$. Furthermore, when $\Ga$ is another group, we use the same symbol 
also for the induced map  
$$
L_\La\colon \mat(m\sstimes n, \,\,\k[\La\sstimes \Ga])\,\, \into\,\, \mat(m\sscdot|\La| \sstimes n\sscdot|\La|,\,\, \k[\Ga]).
$$

\begin{lemma}\label{lm-appendix-matrices} 
Let $\Ga$ be an amenable group and let $\La$ be a finite group. For any $T\in \mat (m \times n, \k[\La\times \Ga])$ we have
$$
    \dim_{\k[\La\times \Ga]}\ker T = \frac{1}{|\La|}\dim_{\k[\Ga]} \ker L_\La(T). 
$$
\end{lemma}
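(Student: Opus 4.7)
The plan is to use a F{\o}lner sequence of the form $\La \sstimes \Phi_i$ in $\La \sstimes \Ga$, and to identify $T_{\La \sstimes \Phi_i}$ with $L_\La(T)_{\Phi_i}$ as $\k$-linear maps via the left regular representation of $\La$.

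First, starting with an arbitrary F{\o}lner sequence $\Phi_1, \Phi_2, \ldots$ in $\Ga$, we verify that $\La \sstimes \Phi_i$ is a F{\o}lner sequence in $\La \sstimes \Ga$. Using the direct-product structure, for any finite $\Si \subset \La \sstimes \Ga$ with projection $\Si' \subset \Ga$ one immediately obtains $\partial_\Si(\La \sstimes \Phi_i) = \La \sstimes \partial_{\Si'}\Phi_i$, so the boundary ratio equals $|\partial_{\Si'}\Phi_i|/|\Phi_i|$ and tends to $0$. Note that, unlike in the proof of Lemma~\ref{lem-semi}, no subsequence extraction is needed; this is what makes the direct-product case easier than the semi-direct one.

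Next, the main step is the identification of $T_{\La \sstimes \Phi_i}$ with $L_\La(T)_{\Phi_i}$. Using the basis $\la_1,\ldots,\la_{|\La|}$ of $\k[\La]$, there is a natural $\k$-vector space isomorphism $\k[\La \sstimes \Ga] \cong \k[\Ga]^{|\La|}$ which in turn yields $\k[\La \sstimes \Ga]^n \cong \k[\Ga]^{n \sscdot |\La|}$; this restricts to an isomorphism $\k[\La \sstimes \Phi_i]^n \cong \k[\Phi_i]^{n \sscdot |\La|}$. By the very definition of the left regular representation, left multiplication by $(\la,\ga) \ssin \La \sstimes \Ga$ on $\k[\La \sstimes \Ga]$ becomes, in this basis, multiplication by $L_\La(\la)\sscdot\ga \ssin \mat(|\La| \sstimes |\La|, \k[\Ga])$; extending $\k$-linearly and passing to block matrices, left multiplication by $T$ becomes left multiplication by $L_\La(T)$. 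The projection $\pi_{\La \sstimes \Phi_i}$ manifestly corresponds to $\pi_{\Phi_i}$ applied on each of the $|\La|$ components. Therefore $T_{\La \sstimes \Phi_i}$ and $L_\La(T)_{\Phi_i}$ are conjugate by a $\k$-linear isomorphism, so their kernels have equal dimension over $\k$.

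Combining these observations gives
$$
\dim_{\k[\La\sstimes\Ga]} \ker T \,=\, \lim_{i\to\infty} \frac{\dim_\k \ker T_{\La \sstimes \Phi_i}}{|\La|\sscdot |\Phi_i|} \,=\, \frac{1}{|\La|} \lim_{i\to\infty} \frac{\dim_\k \ker L_\La(T)_{\Phi_i}}{|\Phi_i|} \,=\, \frac{1}{|\La|}\dim_{\k[\Ga]}\ker L_\La(T),
$$
as claimed. The main obstacle is the bookkeeping in the identification step: the basis change must be shown to intertwine simultaneously the action of $T$ and the projection $\pi_{\La \sstimes \Phi_i}$, and although each verification is immediate from the definitions, both need to be carried out carefully in order to conclude that the compressions themselves (and not merely their domains and codomains) are isomorphic.
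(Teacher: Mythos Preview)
Your proof is correct and follows essentially the same route as the paper's sketch: both take the product F{\o}lner sequence $\La\sstimes\Phi_i$, set up the $\k$-linear isomorphism $\k[\La\sstimes\Phi_i]^n\cong\k[\Phi_i]^{n|\La|}$ coming from the left regular representation, verify that it intertwines the two compressions, and then compare the defining limits. Your write-up is in fact somewhat more explicit than the paper's (you spell out the boundary computation and note that both the action and the projection must be intertwined), but the underlying argument is the same.
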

\begin{proof}[Sketch of Proof]
For simplicity we assume $m=n=1$. Let $\Phi\subset \Ga$ and let $\la_1,\la_2,\ldots, \la_{|\La|}$ be the enumeration of the elements of $\La$ used to define $L_\La$. Let 
$$
J\colon \k[\Phi \sstimes \La] \to \k[\Phi]^{|\La|}
$$
be defined by demanding that $J(\phi, \la_j)$ is the vector whose $j$-th coordinate is $\phi$ and all the other coordinates are $0$. It is straightforward to check that $J$ is a $\k$-linear isomorphism which intertwines $T_{\Phi\sstimes \La}$ and the compression of  $L_\La(T)$ to $\k[\Phi]^{|\La|}$. The lemma follows by noting that if $\Phi_1,\Phi_2,\ldots$ is a F\o lner sequence for $\Ga$ then $\Phi_1\sstimes \La, \Phi_2\sstimes \La,\ldots$ is a F\o lner sequence for $\Ga\sstimes \La$. 
\end{proof}

Let us recall that $\ZZ_2 = \langle u \rangle$  is the cyclic group of order $2$.

\begin{lemma}\label{lm-appendix-multiplication}
 Let $\k$ be a field such that $\chr(\k)\neq 2$, let $\Ga$ be an amenable group, let $m,n\ssin \N$ be such that  $m\ge n$, let  $T\ssin \mat( m\sstimes n, \k[\Ga])$, and let $I\ssin \mat(m\sstimes n, \k)$ be such that $\ker (I) =\{0\}$. Let $S \ssin \mat( m\sstimes n, \k[\Ga\sstimes \ZZ_2]) $ be defined as   
$$
S := I\cdot \diag_n (\frac{1-u}{2}) + T \cdot \diag_n(\frac{1+u}{2}).
$$
Then 
 $$
    \dim_{\k[\Ga\sstimes \ZZ_2]} \ker S  = \frac{1}{2}\dim_{\k[\Ga]} \ker T.  
 $$
\end{lemma}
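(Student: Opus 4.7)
The approach is to exploit the idempotent decomposition of $\k[\ZZ_2]$ coming from $\chr(\k) \neq 2$. Set $e_+ := \frac{1+u}{2}$ and $e_- := \frac{1-u}{2}$; these are orthogonal idempotents in $\k[\ZZ_2]$ with $e_+ + e_- = 1$, $e_+ e_- = 0$, $u\sscdot e_+ = e_+$, and $u\sscdot e_- = -e_-$. Consequently, multiplication in $\k[\Ga\sstimes\ZZ_2]$ respects the splitting of each element into an $e_+$-part and an $e_-$-part, and for any $v_+, v_- \ssin \k[\Ga]^n$ one computes directly
\[
S\bigl(v_+\sscdot e_+ + v_-\sscdot e_-\bigr) \,=\, (T v_+)\sscdot e_+ \,+\, (I v_-)\sscdot e_-,
\]
since $\diag_n(e_+)$ and $\diag_n(e_-)$ annihilate the wrong halves.

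Next, I would work with a F\o lner sequence of the form $\Phi_j \sstimes \ZZ_2$ in $\Ga \sstimes \ZZ_2$, where $\Phi_1, \Phi_2, \ldots$ is a F\o lner sequence in $\Ga$; this is a F\o lner sequence because $\ZZ_2$ is finite (alternatively, invoke Lemma~\ref{lem-semi}). Any $v \ssin \k[\Phi_j\sstimes\ZZ_2]^n$ decomposes uniquely as $v = v_+\sscdot e_+ + v_-\sscdot e_-$ with $v_\pm \ssin \k[\Phi_j]^n$, and since the projection $\pi_{\Phi_j\sstimes\ZZ_2}$ preserves the $e_+, e_-$ decomposition, we get
\[
S_{\Phi_j\sstimes\ZZ_2}(v) \,=\, T_{\Phi_j}(v_+)\sscdot e_+ \,+\, I v_- \sscdot e_-
\]
(the compression is trivial on the $e_-$-part because $I$ has scalar entries and thus does not enlarge support in $\Ga$).

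Therefore $\ker S_{\Phi_j\sstimes\ZZ_2} = \ker T_{\Phi_j} \,\oplus\, \ker\bigl(I\colon \k[\Phi_j]^n \to \k[\Phi_j]^m\bigr)$ as $\k$-vector spaces. A short coordinate-wise argument using that $I$ has entries in $\k$ shows $\ker(I\colon \k[\Phi_j]^n\to\k[\Phi_j]^m) = 0$: writing any $w\ssin \k[\Phi_j]^n$ as $\sum_{\phi\ssin\Phi_j} w(\phi)\sscdot\phi$ with $w(\phi)\ssin\k^n$, the equality $Iw = 0$ forces $Iw(\phi) = 0$ for each $\phi$, hence $w(\phi)=0$ by the hypothesis $\ker I = \{0\}$. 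Thus $\dim_\k \ker S_{\Phi_j\sstimes\ZZ_2} = \dim_\k \ker T_{\Phi_j}$.

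Finally, dividing by $|\Phi_j\sstimes \ZZ_2| = 2|\Phi_j|$ and passing to the limit gives
\[
\dim_{\k[\Ga\sstimes \ZZ_2]} \ker S \,=\, \lim_{j\to\infty} \frac{\dim_\k \ker T_{\Phi_j}}{2|\Phi_j|} \,=\, \frac{1}{2}\dim_{\k[\Ga]} \ker T.
\]
The argument is essentially formal once the idempotent decomposition is in place; the only point requiring care is the claim that the compression behaves well on the two isotypic components, and that $I$ acting on $\k[\Phi_j]^n$ retains trivial kernel. Neither is a serious obstacle, so the only real hypothesis consumed is $\chr(\k)\neq 2$, needed to invert $2$ in the definition of $e_\pm$.
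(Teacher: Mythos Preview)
Your proof is correct and follows essentially the same approach as the paper's sketch: both use the F\o lner sequence $\Phi_j\sstimes\ZZ_2$ and decompose $\k[\Phi_j\sstimes\ZZ_2]^n$ via the orthogonal idempotents $\frac{1\pm u}{2}$, observing that $S_{\Phi_j\sstimes\ZZ_2}$ restricts to $T_{\Phi_j}$ on the $e_+$-component and to $I$ on the $e_-$-component. Your write-up is in fact more explicit than the paper's sketch, in particular in verifying that $I$ acting on $\k[\Phi_j]^n$ still has trivial kernel.
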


\begin{proof}[Sketch of Proof]
Let $\Phi_1,\Phi_2,\ldots$ be a F\o lner sequence for $\Ga$. Clearly  $\Phi_1 \sstimes \ZZ_2,\, \Phi_2\sstimes \ZZ_2,\ldots $ is a F{\o}lner sequence for $\Ga \sstimes \ZZ_2$. Let us fix $i$ and set $\Phi \,{:=}\, \Phi_i$. Let $V_- := \lin (\frac{1-u}2\sscdot\phi\colon \phi \ssin \Phi)$ and  $V_+ := \lin (\frac{1+u}2\sscdot\phi\colon \phi \ssin \Phi)$. 

It is straightforward to check that $\k[\Phi\times \ZZ_2]^m = V_-^m \oplus V_+^m$ and that both $V_-^m$ and $V_+^m$ are $S_{\Phi\sstimes \ZZ_2}$-invariant. Furthermore,  the restriction of $S_{\Phi\sstimes \ZZ_2}$ to $V_-^m$ is isomorphic to $I_\Phi$ and the restriction to $V_+^m$ is isomorphic to $T_\Phi$. Since we also have $\dim V_+ = \dim V_-$, the lemma follows. 
\end{proof}

\subsection{Graphs}\mbox{}

Let us state our graph-theoretic conventions. A \textit{directed graph} is a pair $(V,E)$, where $V$ is a set and $E$ is a subset of $V\sstimes V$. In particular, each vertex of a directed graph is allowed to have a single self-loop and there are no multiple edges.

Let $L$ be a set. A \textit{directed graph with edges labeled by $L$ } is a pair $(V,E)$, where $V$ is a set and $E$ is a subset of $V\sstimes V \sstimes L$, such that for any $(v,w)\ssin V\sstimes V$ there is at most one $s\ssin L$ such that $(v,w,s)\ssin E$. 

A \textit{directed multigraph with edges labeled by $L$} is a pair $(V,E)$, where $V$ is a set and $E$ is a subset of $V\sstimes V \sstimes L$.

If $G = (V,E)$ is a directed graph with edges labeled by $L$, and $L$ is a subset of a ring $R$, then the \textit{adjacency operator} on $G$ is the unique map $R$-linear map $M\colon R[V] \to R[V]$ such that if  $(v,w,s)\ssin E$ then the coefficient of $M(v)$ at $w$ is equal to $s$.

\subsection{Pontryagin duality and ring homomorphisms}\mbox{}

Let $A$ be a group isomorphic to the direct sum of infinitely many copies of $\ZZ_2$,  let $X := \wh A$ be the Pontryagin dual of $A$ , and let 
$\mu$ be the Haar measure on $X$ normalized so that $\mu(X) = 1$. The key property of $A$ is that all the homomorphisms $A \to U(1)$ factor through $\{-1,1\}$. From now on we will always assume that $\k$ is a field with $\chr(\k)\neq 2$. As such, we obtain an embedding $\k[A]\into \fun(X, \k)$, where $\fun(X,\k)$ denotes the set of (set theoretic) maps from $X$ to $\k$. The image of $p\in \k[A]$ in $\fun(X,\k)$ is denoted by $\wh p$. This
embedding commutes with base field changes in the sense that we will now explain.

Following the notation of \cite[Example 3 after Corollary
3.2]{atiyah-macdonald-introduction-to-commutative-algebra}, we let $\Z_2$ be the
ring of rational numbers which can be written with denominator which is a power of
$2$. Let $\rho\colon \Z_2 \to \k$ be the natural homomorphism. We use the same
letter $\rho$ for the induced homomorphisms $\fun(X,\Z_2) \to \fun(X,\k)$ and $\Z_2[F] \to \k[F]$ for any set $F$. 

We say that $p\ssin \Q[A]$ is a \textit{projection} if the range of $\wh p\ssin \fun(X,\Q)$ is a subset of $\{0,1\}$. If $p,r\ssin \Q[A]$ are projections then we will write $p\prec r$ if and only if $\supp(\wh p) \subset \supp(\wh r)$. We will say that $p$ and $r$ are \textit{orthogonal}, denoted by $p\perp r$, if $\supp(\wh p)\cap \supp(\wh r) = \emptyset$.

\begin{lemma}\label{le-comm}
\begin{enumerate}
\item 
The following diagram commutes:
$$
\begin{CD}
\Z_2[A] @>\wh{}>> \fun(X, \Z_2)\\
@V\rho VV            @V\rho VV\\
\k[A] @>\wh{}>> \fun (X, \k)
\end{CD}
$$
\item Let $p\in \Z_2[A] \setminus \{0\}$ be a non-zero projection. Then $ \supp (\rho(\wh p)) = \supp(\wh p)$ and $\rho(p) \neq 0$.
\end{enumerate}
\end{lemma}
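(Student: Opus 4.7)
The plan is to prove the two parts separately, with part (2) building on part (1).

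For part (1), since both $\wh{\ }$ and $\rho$ are additive, it suffices to check commutativity of the square on a single group element $a\ssin A$. Going around the top-right, we first obtain $\wh a\ssin \fun(X,\Z_2)$, the map $x\mapsto x(a)\ssin\{-1,1\}\subset \Z_2$, and then apply $\rho$ to get the function $x\mapsto \rho(x(a))$, which equals $\pm 1$ in $\k$. Going around the bottom-left, $\rho(a)$ is just $a$ viewed in $\k[A]$, whose Fourier transform is the map $x\mapsto x(a)\ssin\{-1,1\}\subset \k$ (this is where the standing assumption $\chr(\k)\ssneq 2$ is used, so that $-1\ssneq 1$ in $\k$ and the character $A\to\{-1,1\}\subset\k$ is genuine). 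The two functions agree pointwise, and linearity propagates the equality to all of $\Z_2[A]$.

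For part (2), the assumption that $p$ is a projection means that $\wh p$ takes only the values $0$ and $1$ in $\Z_2$. Consequently $\rho(\wh p)$ takes only the values $\rho(0)\sseq 0$ and $\rho(1)\sseq 1$ in $\k$, so $\supp(\rho(\wh p))\sseq\{x\colon \wh p(x)\sseq 1\}\sseq \supp(\wh p)$, giving the first claim.

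For the second claim, one combines part (1) with the injectivity of the Fourier embedding $\k[A]\into \fun(X,\k)$. Since $p\ssneq 0$ and $\wh{\ }\colon \Z_2[A]\to\fun(X,\Z_2)$ is injective, $\supp(\wh p)$ is non-empty; by the first claim of part (2), $\supp(\rho(\wh p))$ is also non-empty, so $\rho(\wh p)\ssneq 0$ in $\fun(X,\k)$. By part (1), $\rho(\wh p)\sseq \widehat{\rho(p)}$, so $\widehat{\rho(p)}\ssneq 0$, and by injectivity of the embedding $\k[A]\into\fun(X,\k)$ we conclude $\rho(p)\ssneq 0$. There is no real obstacle here; the only subtle point is keeping track of where $\chr(\k)\ssneq 2$ is used (to ensure the character values $\pm 1$ remain distinct in $\k$ and that the embedding $\k[A]\into\fun(X,\k)$ is available at all).
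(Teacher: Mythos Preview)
Your proof is correct and follows essentially the same route as the paper. The paper dismisses part (1) as ``a straightforward exercise in Pontryagin duality,'' whereas you spell out the check on generators; for part (2) your argument and the paper's coincide almost verbatim, the only cosmetic difference being that the paper rederives $\wh p(x)\ssin\{0,1\}$ from the idempotency $\wh p^{\,2}=\wh p$, while you invoke the stated definition of projection directly.
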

\begin{proof}
The first item is a straightforward exercise in Pontryagin duality. 


Since the Pontryagin duality is a ring homomorphism, we have $\wh p\,{}^2 = \wh p$, and hence for every $x\ssin X$ we have $\wh p(x) \ssin \{0,1\}$. Since $\rho(0) = 0$ and $\rho(1) = 1$ we deduce that 
 $ \supp (\rho(\wh p)) = \supp(\wh p)$. Noting that the Pontryagin duality map $\Z_2[A] \to \fun(X,\Z_2)$ is an embedding, we conclude that $\supp(\wh p) \neq \emptyset$ and hence also $\supp (\rho(\wh p)) \neq \emptyset$. By the first item, this shows $\rho(p)\neq 0$ and finishes the proof.
\end{proof}

\begin{lemma}\label{lem-basis}
Let $F$ be a finite subgroup of $A$ and let $p\in \Z_2[F]$ be a projection. Then
\beq\label{eq-dims}
\dim_\k \lin \,(\rho(p)a\colon a{\in}F) = |F|{\cdot}\mu(\supp(\wh p)).
\eeq
\end{lemma}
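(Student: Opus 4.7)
The plan is to reduce the statement to a finite-dimensional Fourier computation on $F$. The hypothesis $\chr(\k)\neq 2$ is essential because $F$, being a finite subgroup of $A\cong \bigoplus_\infty \ZZ_2$, is an elementary abelian $2$-group, so $|F|$ is a power of $2$ and hence invertible in $\k$. Consequently the Pontryagin transform restricted to $F$ is a $\k$-algebra isomorphism
\[
\k[F]\xrightarrow{\ \sim\ }\fun(\wh F,\k), \qquad a\longmapsto \bigl(\chi\mapsto \chi(a)\bigr).
\]

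The next step is to transport $\rho(p)$ through this isomorphism. Since $p\ssin \Z_2[F]\subset \Z_2[A]$, the function $\wh p\colon X\to \Z_2$ factors through the restriction homomorphism $\pi\colon X=\wh A\twoheadrightarrow \wh F$, say $\wh p=\overline p\circ \pi$ for a unique $\overline p\colon \wh F\to \{0,1\}$. The pushforward of $\mu$ under $\pi$ is the normalized counting measure on $\wh F$, so
\[
\mu(\supp(\wh p))\;=\;\frac{|S|}{|F|},\qquad S:=\supp(\overline p)\subset \wh F.
\]
It therefore suffices to prove $\dim_\k\lin(\rho(p)a\colon a{\in}F)=|S|$. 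Applying Lemma~\ref{le-comm}(i) at the level of characters of $F$, the image of $\rho(p)$ in $\fun(\wh F,\k)$ under the isomorphism above equals $\rho\circ\overline p$, which is the indicator function $\mathbf 1_{S}$ (reduction mod $\chr(\k)$ preserves $\{0,1\}$-valued functions).

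Under the isomorphism $\k[F]\cong \fun(\wh F,\k)$, the element $\rho(p)\cdot a$ thus corresponds to the function $\chi\mapsto \mathbf 1_{S}(\chi)\cdot \chi(a)$. Therefore $\lin(\rho(p)a\colon a{\in}F)$ corresponds to
\[
\mathbf 1_{S}\cdot \lin\bigl(\chi\mapsto \chi(a)\colon a\ssin F\bigr)\;=\;\mathbf 1_{S}\cdot \fun(\wh F,\k),
\]
where the equality uses the surjectivity of the Fourier isomorphism on $F$. The right-hand side is exactly the subspace of $\fun(\wh F,\k)$ of functions supported on $S$, which has $\k$-dimension $|S|$. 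Combining with the measure computation gives \eqref{eq-dims}.

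The proof is mostly bookkeeping; the only real obstacle is keeping straight the interplay between three operations — Pontryagin duality, base change $\Z_2\to \k$, and restriction from $\wh A$ to $\wh F$ — and Lemma~\ref{le-comm} is precisely the tool that ensures they commute.
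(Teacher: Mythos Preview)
Your proof is correct and follows essentially the same route as the paper's. Both arguments reduce to the finite group $F$, identify $\mu(\supp(\wh p))$ with the normalized counting measure of the support of the transform of $p$ on $\wh F$, and then use that the Pontryagin map $\k[F]\to\fun(\wh F,\k)$ is a $\k$-algebra isomorphism (valid since $\chr(\k)\neq 2$) to identify the span in question with the image of multiplication by the indicator function $\mathbf 1_S$; your write-up is slightly more explicit about why the Fourier transform is an isomorphism over $\k$ and about the pushforward measure, but the content is the same.
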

\begin{proof}
Let $X_F$ be the Pontryagin dual of $F$, let $\nu$ be the Haar measure on $X_F$ normalized so that $\nu(X_F)=1$ and let $\wh p_F\colon X_F \to \{0,1\}$ be the Pontryagin dual of $p$ as an element of the group ring $\Z_2[F]$. A simple exercise in Pontryagin duality shows that $\nu(\supp(\wh p_F)) = \mu(\supp(\wh p))$. Thus it is enough to show that
\beq\label{eq-dims2}
\dim_\k \lin \,(\rho(p)a\colon a{\in}F) = |F|{\cdot}\nu(\supp(\wh p_F)).
\eeq

Let $M\colon \k[F] \to \k[F]$ and $N\colon \fun(X_F, \k) \to \fun(X_F, \k)$ be the multiplications by $\rho(p)$ and $\rho(\wh p_F)$, respectively.  The left-hand side of \eqref{eq-dims2} is equal to $\dim_\k \im( M)$ and the right-hand side is equal to $\dim_\k \im (N)$. They are equal because the Pontryagin duality map $\k[F] \to \fun(X_F, \k)$ is a $\k$-algebra isomorphism which intertwines $M$ and $N$.
\end{proof}

\subsection{$T$-graphs}\mbox{}

Let $A$, $X$ and $\mu$ be as in the previous section, and let $\Ga\actson A$ be an action of a countable group $\Ga$ on $A$ by group automorphisms. We recall that the outcome of the action of $\ga\ssin \Ga$ on $a\ssin A$ is denoted by $\ga.a$. The central dot
$\cdot$ is reserved for group and ring multiplications, and ``the implied dot is the central one''; for example, if $\ga\ssin \Ga$ and $a\ssin A$ then $\ga a$ should be read as $\ga\sscdot a$.

The dual action $\Ga\,{\actson}\, X$ is the unique continuous action for which the diagram in Lemma~\ref{le-comm} is $\Ga$-equivariant. It is easy to check that the action $\Ga\actson X$ is measure-preserving.

Let  
\begin{equation*}
T := \sum_{i=1}^m \ga_i f_i \in \Z_2[\Ga\ltimes A]\qquad\ga_i \in \Ga,\; f_i\in \Z_2[A].
\end{equation*}
A \textit{$T$-graph} is a finite set $G\subset \Z_2[A]$ with the following properties.

\begin{enumerate}
\item The elements of $G$ are pairwise orthogonal non-zero projections.
\item For all $p \ssin G$ and $k\ssin [m]$ we have either $\supp (\wh p) {\,\subset\,} \supp (\wh f_k)$ or  $$\supp (\wh p) {\,\cap\,} \supp (\wh f_k) = \emptyset.$$
\item If for some $p \ssin G$ and $k\ssin[m]$ we have $\supp (\wh p)  {\,\subset\,} \supp (\wh f_k)$ then $\ga_k. p \in G$ and $\wh f_k$ is constant on $\supp (\wh p)$.
\end{enumerate}

\begin{remarks} 
\begin{enumerate}[(i),wide,itemsep=5pt]
\item A careful reader will note  a slight ambiguity in the above definition: whether or not a set $G\subset \Z_2[A]$ is a $T$-graph might depend on the choice of a decomposition of $T$ as a sum $T = \sum_{i=1}^m \ga_i f_i$. Thus the expression "$T$-graph" should be replaced by ``$T$-graph with respect to the decomposition $T = \sum_{i=1}^m \ga_i f_i$.''   

\vspace{3pt} However, this this will never be an issue for us, because we always work with exactly one representation of $T$ as a sum $\sum_{i=1}^{m} \ga_i f_i$. 

\item Let $S,T \in \Z_2[\Ga\ltimes A]$. We note that it can easily happen that a finite set $G\subset \Z_2[A]$ is both an $S$-graph and a $T$-graph.
\end{enumerate}
\end{remarks}

Given a $T$-graph $G$, let us define a directed multigraph $\cal R(G, T)$ with edges labeled by the set $\{\ga_1,\ldots, \ga_m\}$. We let the vertex set of $\cal R(G,T)$ be equal to the set $G$, and we let $(p,q,\ga) \in G\times G\times \{\ga_1,\ldots, \ga_m\}$ be an edge if and only if for some $k\ssin [m]$ we have $\ga=\ga_k$,  $\supp (\wh p) \subset \supp (\wh f_k)$ and $\ga_k.p = q$.  

We note in particular that at each vertex of $\cal R(G,T)$ the out-edges are uniquely labeled, and so the set of edges of $\cal R(G,T)$ can be naturally identified with a subset of $G\times \{\ga_1,\ldots, \ga_m\}$, by sending an edge $(p,q,\ga)$ to $(p,\ga)$. 

A \textit{path} in $\cal R(G,T)$ is, speaking informally, a finite sequence of
edges in $\cal R(G,T)$ such that any two consecutive edges are adjacent. More
precisely, a path from $p_1$ to $\lambda_w.p_w$ is a sequence of elements
$(p_1,\la_1),\ldots, (p_w,\la_w)$ of $G\sstimes \Ga$ such that for all
$i=1,\ldots, w{-1}$ we have $\la_i.p_i = p_{i+1}$ and for all $i=1,\ldots, w$
we have that either $(p_i,\la_i)$ or $(\la_i.p_i, \la_i^{-1})$ is an edge.

We say that $G$ is \textit{connected} if for any two vertices in $\cal R(G,T)$ there exists a path which connects them.

 We define the \textit{label} of such a path to be the product $\la_{w}\sscdot\ldots\sscdot \la_{1} \in \Ga$. It is clear that if $p,q\in G$ and $\ga$ is a label of a path from $p$ to $q$ then $\ga.p=q$.

 A \textit{loop} is a path whose starting
and final vertices are equal.  For $p\ssin G$ we define 
$$
\Ga(p) := \{\ga\in \Ga\colon \ga \text{ is a label of a path in $\cal R(G,T)$ starting at $p$}\}\cup \{e\},
$$
where $e$ is the neutral element of $\Ga$. 

We say that $G$ is \textit{simply-connected} if it is connected and the label of any loop in $\cal R(G,T)$ is equal to the neutral element of $\Ga$. If $G$ is simply-connected then in $\cal R(G,T)$ the label of any self-loop is equal to the identity and there are no multiple edges. Furthermore, simply connected $T$-graphs have the following basic properties which we will implicitly use.

\begin{lemma}
  \label{lem:compare_}
  Let $G$ be a simply-connected $T$-graph, let $p\in G$, and let
  $\la\ssin\Gamma(p)$. 
\begin{enumerate}[(i),nosep]
\item The map $\alpha\colon \Gamma(p)\to G$ which sends $\gamma\ssin \Gamma(p)$ to $\ga.p$ is a
  bijection. 

\item We have $\Gamma(\la.p)=\Gamma(p)\cdot
  \la^{-1}:=\{\gamma\cdot\la^{-1}\colon\,\, \gamma\in \Gamma(p)\}$.
\end{enumerate}
\end{lemma}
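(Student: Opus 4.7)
The plan is to exploit the fact that, in a simply-connected $T$-graph, paths in $\cal R(G,T)$ may be concatenated and reversed, and that any two paths between the same pair of vertices carry equal labels. Both parts of the lemma then reduce to short combinatorial manipulations of paths. The only bookkeeping observation needed throughout is that the edge condition ``either $(p_i,\la_i)$ or $(\la_i.p_i,\la_i^{-1})$ is an edge'' is symmetric between an edge and its reverse, so any valid path $\pi=(p_1,\la_1),\ldots,(p_w,\la_w)$ admits a valid formal reverse $\pi^{-1}$, obtained by listing the steps backwards with each $\la_i$ replaced by $\la_i^{-1}$, whose label is $(\la_w\cdots\la_1)^{-1}$. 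Concatenation of two compatible paths yields a valid path whose label is the product (in $\Gamma$) of the two labels, with the second factor placed on the left (since the label convention multiplies later steps on the left).

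For part~(i), the map $\alpha$ is well-defined because the endpoint of any path starting at $p$ with label $\ga$ is exactly $\ga.p\ssin G$, and by the convention $e.p=p$. Surjectivity follows directly from the connectedness of $G$: given $q\ssin G$, pick any path from $p$ to $q$ and read off its label $\ga\ssin\Gamma(p)$, so that $\alpha(\ga)=\ga.p=q$. For injectivity, suppose $\ga_1.p=\ga_2.p$ with $\ga_i\ssin\Gamma(p)$ realized by paths $\pi_1,\pi_2$. The concatenation of $\pi_1$ with $\pi_2^{-1}$ is a loop at $p$ whose label equals $\ga_2^{-1}\ga_1$; simple-connectedness of $G$ then forces this label to be the neutral element of $\Gamma$, so $\ga_1=\ga_2$.

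For part~(ii), fix a path $\pi$ from $p$ to $\la.p$ with label $\la$, which exists since $\la\ssin\Gamma(p)$. Given $\mu\ssin\Gamma(\la.p)$ realized by a path $\pi'$ starting at $\la.p$, the concatenation of $\pi$ followed by $\pi'$ starts at $p$ and has label $\mu\sscdot\la$, so $\mu\la\ssin\Gamma(p)$ and therefore $\mu\ssin\Gamma(p)\sscdot\la^{-1}$. Conversely, given $\ga\ssin\Gamma(p)$ realized by a path $\sigma$ from $p$, the concatenation of $\pi^{-1}$ followed by $\sigma$ starts at $\la.p$ and has label $\ga\sscdot\la^{-1}$, hence $\ga\la^{-1}\ssin\Gamma(\la.p)$. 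The neutral elements match up correctly: $e\ssin\Gamma(\la.p)$ corresponds under right-multiplication by $\la^{-1}$ to $\la\ssin\Gamma(p)$, which is the standing hypothesis, and likewise $e\ssin\Gamma(p)$ corresponds to $\la^{-1}\ssin\Gamma(\la.p)$, which is supplied by $\pi^{-1}$.

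No step presents a genuine obstacle; the lemma is essentially the statement that $\Gamma(p)$ behaves like the set of homotopy classes of paths starting at $p$ in a topological covering, with simple-connectedness supplying the uniqueness of labels needed for injectivity in~(i) and for the symmetric matching of the two sides in~(ii).
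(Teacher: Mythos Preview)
Your proof is correct and follows essentially the same approach as the paper's sketch: surjectivity from connectedness, injectivity from concatenating a path with a reversed path to form a loop whose label must be trivial, and part~(ii) via concatenation with a fixed path from $p$ to $\la.p$. The only minor difference is that the paper proves just the inclusion $\Gamma(\la.p)\cdot\la\subset\Gamma(p)$ and then concludes equality by a cardinality count (both sets have $|G|$ elements by part~(i)), whereas you establish both inclusions directly and handle the neutral element separately; your version is slightly longer but avoids invoking finiteness of $G$.
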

\begin{proof}[Sketch of Proof]
  (i) By definition of ``connected'', the map $\alpha$ is surjective. If
  $\gamma_1.p=\gamma_2.p=q$ for $\gamma_1,\gamma_2\in \Gamma(p)$, then the
  concatenation of a path from $p$ to $q$ with label $\gamma_1$ and the
  inverse of a path from $p$ to $q$ with label $\gamma_2$ is a loop with label
  $\gamma_2^{-1}\gamma_1$. Therefore $\gamma_2=\gamma_1$ and we see that
  $\alpha$ is injective.

  (ii) Every path starting at $\la.p$ with a label $\ga\in\Gamma(\la.p)$ can be
  pre-concatenated with a path from $p$ to $\la.p$, with label $\la$,
  resulting in a path starting at $p$, and with label $\ga\cdot \la$. Therefore,
  $\Gamma(\la.p)\cdot \la\subset \Gamma(p)$. As both sets are finite and of
the  same cardinality, they are equal.
\end{proof}

 If $G$ is simply-connected and $p,q\in G$ then we define $\ga(p,q)$ to be the unique element of $\Ga$ which appears as the label of a path from $p$ to $q$. In particular we have $\ga(p,q).p=q$. 

Let $G$ be a $T$-graph, let $p,q \in G$, and let $\k$ be a field with $\chr(\k)\neq 2$.  We define $M(p,q;\k)\ssin \k$  by setting 
$$
    M(p,q;\k) := \sum_{i\colon \ga_i.p=q} \rho(\langle f_i,p\rangle)
$$
where $\langle f_i, p\rangle\in \Z_2$ is defined to be the unique value of $f_i$ on $\supp (\wh p)$. We use the convention that the empty sum is equal to $0$, so if there is no edge between $p$ and $q$ in $\cal R(G,T)$ then $M(p,q;\k) = 0$. We let $M(G,T;\k)\colon \k[G]\to \k[G]$ be the  $\k$-linear map such that for $p\in G$ we have  
$$
    M(G,T;\k) (p) := \sum_{q\in G} M(p,q;\k) q.
$$

For the applications in Section~\ref{sec-app} it is convenient to define a directed graph $\cal S(G,T;\k)$ with edges labeled by the elements of $\k$. We let the vertex set of $\cal S(G,T;\k)$ be equal to $G$ and we let $(p,q,s)$ be an edge of $\cal S(G,T;\k)$ if for some $\ga\ssin \{\ga_1,\ldots, \ga_k\}$ the triple $(p,q,\ga)$ is an edge in $\cal R(G,T)$ and $s = M(p,q;\k)$. Let us note that for any field $\k$ the adjacency operator on $\cal S(G,T;\k)$ is equal to $M(G,T;\k)$.

We let $M(p,q) := M(p,q;\Q)$, $M(T,G) := M(T,G;\Q)$ and $S(G,T) := S(G,T;\Q)$. Clearly we have $M(p,q;\k) = \rho(M(p,q))$. 

The following is the key observation about simply-connected $T$-graphs. 

\newcommand{\rI}{J}
\begin{lemma}\label{lem-ko}
Let $G$ be a simply-connected $T$-graph, let $q\ssin G$, let $a\ssin A$, and let $q'\ssin \Z_2[A]$ be a projection such that $q'\prec q$. Let $W$ be the $\k$-linear subspace of $\k[\Ga\ltimes A]$ spanned by the elements $ \ga(q,r)\sscdot \rho(q')\sscdot a$, where $r\in G$. Then $W$ is $\rho(T)$-invariant and isomorphic to $\k[G]$ via an isomorphism which intertwines $\rho(T)$ and $M(G,T;\k)$. 
\end{lemma}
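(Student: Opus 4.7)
The plan is to define a candidate linear map $\phi\colon \k[G]\to \k[\Ga\ltimes A]$ on the basis $G$ by $\phi(r):=\ga(q,r)\sscdot \rho(q')\sscdot a$, then show $\phi$ is a $\k$-linear injection onto $W$, and finally verify directly that $\rho(T)$ carries the image into itself and satisfies $\rho(T)\sscdot \phi(r) = \phi\bigl(M(G,T;\k)\sscdot r\bigr)$ for every $r\in G$.

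For injectivity I would observe that, viewed as an element of $\k[\Ga\ltimes A]$, the vector $\phi(r)$ is supported in the coset $\{\ga(q,r)\}\sstimes A$, and by Lemma~\ref{le-comm}(ii) the factor $\rho(q')$ is non-zero, so $\phi(r)\neq 0$. Lemma~\ref{lem:compare_}(i) gives a bijection between $G$ and $\Ga(q)$ via $r\mapsto \ga(q,r)$, which makes the supports of distinct $\phi(r)$ pairwise disjoint; linear independence, and hence the claim that $\phi$ is an isomorphism onto $W$, follows.

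The core calculation is to compute $\rho(T)\sscdot \phi(r)$. I expand $\rho(T)=\sum_i \ga_i\rho(f_i)$ and push each $\ga(q,r)$ to the left using the semi-direct product commutation rule $\rho(f_i)\sscdot \ga(q,r)=\ga(q,r)\sscdot \bigl(\ga(q,r)^{-1}.\rho(f_i)\bigr)$. The $i$-th term then becomes
\[
\ga_i\sscdot \ga(q,r)\sscdot \bigl(\ga(q,r)^{-1}.\rho(f_i)\bigr)\sscdot \rho(q')\sscdot a,
\]
and it remains to analyze the product $\bigl(\ga(q,r)^{-1}.\rho(f_i)\bigr)\sscdot \rho(q')\in \k[A]$. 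Applying Pontryagin duality (Lemma~\ref{le-comm}(i)) turns this product into a pointwise product of functions on $X$ supported in $\supp(\wh{q'})\sscap \ga(q,r)^{-1}.\supp(\wh{f_i})$. Since $q'\prec q$ and $\ga(q,r).\supp(\wh q)=\supp(\wh r)$, the $T$-graph dichotomy (axiom (2)) forces this intersection to be either all of $\supp(\wh{q'})$, when $\supp(\wh r)\subset \supp(\wh{f_i})$, or empty otherwise. In the non-vanishing case axiom (3) says $\wh{f_i}$ is constant on $\supp(\wh r)$ with value $\langle f_i,r\rangle$, so the factor collapses to the scalar multiple $\rho(\langle f_i,r\rangle)\sscdot \rho(q')$.

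To conclude, I will note that when $\supp(\wh r)\subset \supp(\wh{f_i})$ one has $\ga_i.r\in G$, and simple-connectedness together with Lemma~\ref{lem:compare_}(i) yields the identity $\ga_i\sscdot \ga(q,r)=\ga(q,\ga_i.r)$. Collecting terms by the target vertex $r':=\ga_i.r$ turns the formula into
\[
\rho(T)\sscdot \phi(r) = \sum_{r'\in G}\Bigl(\sum_{i\colon \ga_i.r=r'} \rho(\langle f_i,r\rangle)\Bigr)\sscdot \phi(r') = \sum_{r'\in G} M(r,r';\k)\sscdot \phi(r') = \phi\bigl(M(G,T;\k)\sscdot r\bigr),
\]
which both proves $\rho(T)$-invariance of $W$ and exhibits the intertwining. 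I expect the only delicate point to be the commutation/support bookkeeping; once the $T$-graph axioms are used to collapse $\bigl(\ga(q,r)^{-1}.\rho(f_i)\bigr)\sscdot \rho(q')$ to a scalar multiple of $\rho(q')$, simple-connectedness handles the reassembly of group elements cleanly.
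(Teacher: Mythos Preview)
Your proof is correct and follows essentially the same approach as the paper's. The only cosmetic differences are that the paper defines the inverse isomorphism $J\colon W\to\k[G]$ rather than your $\phi$, and it commutes $\rho(q')$ to the left (writing $\ga(q,r)\sscdot\rho(q')=\rho(r')\sscdot\ga(q,r)$ with $r'=\ga(q,r).q'$) before analyzing the product $\rho(f_i)\sscdot\rho(r')$, whereas you commute $\rho(f_i)$ to the right and analyze $(\ga(q,r)^{-1}.\rho(f_i))\sscdot\rho(q')$; these two products are conjugate by $\ga(q,r)$, so the computations are literally the same.
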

\begin{proof}
Let us first check that the elements  $\ga(q,r)\sscdot \rho(q')\sscdot a \in \k[\Ga\ltimes A]$,  where $r\ssin G$, are linearly independent. For this let us suppose that for some scalars  $s(r)\ssin \k$, where $r\ssin G$, we have
$$
\sum_{r\ssin G} s(r) \ga(q,r)\sscdot \rho(q')\sscdot a  =0, 
$$
and hence also
$$
\sum_{r\ssin G} s(r) \ga(p,q)\sscdot \rho(q') =0.
$$
Since the elements $\ga(q,r)$ of the group $\Ga$, where $r\ssin G$, are pairwise distinct and $\rho(q')\ssin \k[A]$,  we see that in order to deduce that $s(r)\,{=}\, 0$ for all $q\ssin G$, it is enough to check that $\rho(q') \neq 0$, which is part of Lemma~\ref{le-comm}. 

Thus we can define a $\k$-linear isomorphism $\rI\colon W \to \k[G]$ by  setting 
$$
\rI(\ga(q,r)\sscdot \rho(q')\sscdot a):= r.
$$ 

It remains to check that $\rI$ intertwines  the restriction of $\rho(T)$ to $W$ and $M(G,T;\k)$. Let us fix $r\ssin G$ and let 
$$
r' := \ga(q,r)\sscdot q'\sscdot \ga(q,r)^{-1}.
$$
Note that $r'\ssin \Z_2[A]$ is a projection such that $r' \prec r$. Now we have
\begin{multline*}
\rho(T)(\ga(q,r)\sscdot \rho(q')\sscdot a) =\rho(T)(\rho(r')\sscdot \ga(q,r)\sscdot a)= \sum_{i=1}^m \ga_i \rho(f_i)\cdot \rho(r')\sscdot \ga(q,r)\sscdot a.
\end{multline*}

Note that $\rho(f_i)\sscdot \rho(r')$ is equal to the reduction mod $\chr(\k)$ of the unique value of $\wh{f}_i$ on $\supp(\wh r)$. It follows that the right hand side above is equal to
$$
\left(\sum_{s \in G} \ga(r,s) \sscdot {M(r,s;\k)}\sscdot \rho(r')\right) \cdot \ga(q,r)\sscdot a = \sum_{s \in G} {M(r,s;\k)}\sscdot  \ga(q,s)\sscdot \rho(q')\sscdot a.
$$

On the other hand, from the definition of the map $M(G,T;\k)\colon \k[G] \to \k[G]$ we have 
$$
M(G,T;\k)(q) = \sum_{s\in G} {M(q,s;\k)}\sscdot s.
$$ 
This establishes that $\rI$ intertwines $\rho(T)$ and $M(G,T;\k)$ and finishes the proof.
\end{proof}

\begin{lemma}\label{lem-refin}
Let $G\subset \Z_2[A]$ be a finite set of pairwise orthogonal projections, and
let $\Phi$ be a finite subset of $\Ga$. There exists a finite set $K\subset
\Z_2[A]$ of pairwise orthogonal projections with sum equal to $1$, i.e.~such that $\bigcup_{p\ssin K} \supp(\wh p) =X$ and for all $p\ssin K$, $\phi\ssin \Phi$ and $q\ssin G$ we have either $\phi.p\prec q$ or $\phi.p\perp q$.
\end{lemma}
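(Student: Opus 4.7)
The plan is to construct $K$ from the atoms of a finite Boolean algebra of clopen subsets of $X$ attached to the data $(G, \Phi)$. The key point to track is that each such atom can actually be realized as $\supp(\wh{p})$ for some projection $p \in \Z_2[A]$; this becomes automatic once everything is done inside a suitably chosen finite subgroup of $A$.

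First I would fix a finite subgroup $B \subset A$ absorbing all the relevant data. Since $G$ is finite, there exists a finite subgroup $B_0 \subset A$ with $G \subset \Z_2[B_0]$ (finiteness uses that $A$ has exponent $2$, so every finitely generated subgroup is finite). Since $\Phi$ is finite and $\Ga$ acts on $A$ by group automorphisms, the subgroup $B$ generated by $\bigcup_{\phi \in \Phi \cup \{e\}} \phi^{-1}.B_0$ is again finite. For every $q \in G$ and $\phi \in \Phi$ we then have $\phi^{-1}.q \in \Z_2[B]$, and by Lemma~\ref{le-comm} together with the $\Ga$-equivariance of the dual action, $\supp(\wh{\phi^{-1}.q}) = \phi^{-1}.\supp(\wh q)$; in particular each such set is the support of a projection in $\Z_2[B]$.

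Next I would use that projections in $\Z_2[B]$ are closed under $r \mapsto 1 - r$ and $(r,s) \mapsto r \cdot s$, and that these operations correspond under Pontryagin duality to complementation and intersection of supports. Consequently every element of the finite Boolean algebra $\mathcal{B} \subset 2^{X}$ generated by the sets $\phi^{-1}.\supp(\wh q)$, for $\phi \in \Phi$, $q \in G$, is itself the support of a projection in $\Z_2[B]$. I would then let $K$ consist of the projections in $\Z_2[B]$ whose supports are the non-empty atoms of $\mathcal{B}$; these are finitely many pairwise orthogonal projections in $\Z_2[B] \subset \Z_2[A]$ whose supports partition $X$.

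To close, for any $p \in K$, $\phi \in \Phi$, and $q \in G$, the set $\supp(\wh p)$ is an atom of $\mathcal{B}$ whereas $\phi^{-1}.\supp(\wh q)$ is one of its generators, so either $\supp(\wh p) \subset \phi^{-1}.\supp(\wh q)$ or the two are disjoint; applying $\phi$ and using equivariance of the duality translates this into $\phi.p \prec q$ or $\phi.p \perp q$. The argument is essentially bookkeeping; the only step worth singling out is the reduction to the finite subgroup $B$, without which the atoms of $\mathcal{B}$ need not correspond to projections in $\Z_2[A]$ at all.
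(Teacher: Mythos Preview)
Your proof is correct. Both you and the paper construct the common refinement of the partition of $X$ induced by the sets $\phi^{-1}.\supp(\wh q)$; the paper does this by an explicit induction on $|\Phi|$, writing down the products $p\cdot \phi^{-1}.q$ directly, while you phrase the same construction as taking the atoms of the finite Boolean algebra these sets generate. The two arguments are equivalent, and your Boolean-algebra packaging is arguably cleaner.

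One minor comment on exposition: your final remark that the reduction to a finite subgroup $B$ is essential is not quite accurate. Products and complements of projections in $\Z_2[A]$ remain projections in $\Z_2[A]$ (the Pontryagin duality map is a ring homomorphism, so $\widehat{pr}=\wh p\,\wh r$ and $\widehat{1-p}=1-\wh p$), and each atom of $\mathcal B$ is a finite intersection of generators and their complements; hence the atoms already correspond to projections in $\Z_2[A]$ without ever invoking $B$. Passing to $B$ is a harmless convenience, but the proof stands without it.
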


\begin{proof}
We prove the lemma by induction on the cardinality of the set $\Phi$. If $\Phi$ is empty then the desired statement is true since we can take $K:=
\{1\}$. Thus let us assume that $\Phi = \Phi_0 \cup \{\psi\}$ where $|\Phi_0|<|\Phi|$, and let $K_0 \subset \Z_2[A]$ be a finite set guaranteed by the inductive assumption, i.e~for all $p\ssin K_0$, $\phi\ssin \Phi_0$ and $q\ssin G$ we have either $\phi.p\prec q$ or $\phi.p\perp q$.

Let $G^+ :=  G \cup \{1-\sum_{q\ssin G} q\}$. We define $K$ to be the set
$$
 K: = \bigcup_{q\ssin G^+}\bigcup_{p\ssin K_0} \{p\cdot \phi^{-1}.q\colon \phi\ssin \Phi \}.
$$
It is straightforward to check that $K$ has the desired properties.
\end{proof}

\section{Computational tool}\label{sec-ct}
Let $T_1,\ldots, T_l\in \Z_2[\Ga \ltimes A]$. If a set $G\subset \Z_2[A]$ has the property that for all $i\ssin [l]$ we have that $G$ is a $T_i$-graph, then we say that $G$ is a $(T_1,\ldots, T_l)$-graph. Two $(T_1,\ldots, T_l)$-graphs $G_1$ and $G_2$ are \textit{orthogonal} if for all $p\ssin G_1$, $q\ssin G_2$ we have $p\perp q$. We say that a sequence $G_1,G_2,\ldots $ of $(T_1,\ldots, T_l)$-graphs is \textit{exhausting} if $G_i$'s are pairwise orthogonal and $\sum_{i=1}^\infty \mu(\supp(G_i)) =1$.  

We are ready to state and prove the analog of Proposition~\ref{prop-tool-free} for kernel gradients.

\begin{theorem}\label{thm:compu-tational-tool}
Let $\k$ be a field with $\chr(\k)\neq 2$, let $\Ga$ be a countable amenable group, let $A$ be a countable group isomorphic to a direct sum of copies of $\ZZ_2$, and let $\Ga\actson A$ be an action by group automorphisms.

Let $T_1,\ldots, T_l\in \Z_2[\Ga \ltimes A]$ and let $\,G_1,G_2,\ldots\, $ be an exhausting sequence of simply-connected $(T_1,\ldots, T_l)$-graphs. Then we have 
\begin{equation}\label{eq-l}
    \dim_{\k[\Ga \ltimes A]} \ker\left( 
\left[\begin{array}{c}
\rho(T_1)\\
\vdots\\
\rho(T_l)
\end{array}\right]\right)
 = 
\sum_{i=1}^\infty \frac{\mu (\supp (G_i) )}{|G_i|} \dim_\k \ker \left(
\left[\begin{array}{c}
M(G_i,T_1;\k)\\
\vdots\\
M(G_i,T_l;\k)
\end{array}\right]\right).
\end{equation}
\end{theorem}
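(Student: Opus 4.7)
The plan is to compute $\dim_{\k[\Ga \ltimes A]} \ker \rho(T)$ by decomposing $\k[\Psi_j]$, for a suitable Følner sequence $\Psi_j$ in $\Ga \ltimes A$, into $\rho(T)$-invariant subspaces each isomorphic to $\k[G_i]$ via the intertwining supplied by Lemma~\ref{lem-ko}; the theorem then follows by a careful count. I will write the argument for $l=1$ and $T = \sum_k \ga_k f_k$; the general $l$ case is identical, since the Lemma~\ref{lem-ko} subspace is simultaneously invariant for all $\rho(T_k)$ when $G$ is a $(T_1, \ldots, T_l)$-graph.

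First, using Lemma~\ref{lem-semi}, I would take a Følner sequence of the form $\Psi_j = \Phi_j \cdot F_j$, where $\Phi_j$ is Følner in $\Ga$ and $F_j$ is an ascending chain of finite subgroups of $A$ with $\bigcup_j F_j = A$ (available because $A$ is a direct sum of copies of $\Z_2$). After fixing $N \ssin \N_+$, by enlarging $F_j$ I may assume each $F_j$ contains every $\supp f_k$, every projection appearing in $G_1, \ldots, G_N$, and is invariant under $(\Si \cup \bigcup_{i \le N} \Ga(q_i)) \cdot \Phi_j$, where $\Si := \{\ga_k\}$ and $q_i \in G_i$ is a chosen base vertex. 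I would then apply Lemma~\ref{lem-refin} to $\bigcup_{i \le N} G_i$ and $\Phi_j$ to obtain a partition $K \subset \Z_2[F_j]$ of pairwise orthogonal projections summing to $1$ such that for every $\phi \ssin \Phi_j$ and $p \ssin K$, either $\phi.p \prec q$ for some $q \ssin \bigcup_{i \le N} G_i$, or $\phi.p \perp \bigcup_{i \le N} G_i$.

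For each $\phi$ in the interior of $\Phi_j$ (so that $\Si\phi \cup \bigcup_{i \le N} \Ga(q_i)\phi \subset \Phi_j$), each $p \ssin K$ with $\phi.p \prec q \in G_i$, and each $a$ in a basis of $\rho(p) \k[F_j]$ (of size $|F_j| \mu(\supp \wh p)$ by Lemma~\ref{lem-basis}), I would form the subspace
\[
U_i(\phi, p, a) := \lin\bigl\{\ga(q,r) \cdot \phi \cdot \rho(p) \cdot a \,:\, r \in G_i\bigr\} \subset \k[\Psi_j].
\]
Since left-multiplication by $\rho(T)$ commutes with right-multiplication by $\phi$, and $U_i(\phi, p, a) \cdot \phi^{-1} = \lin\{\ga(q,r) \rho(\phi.p)(\phi.a) : r \in G_i\}$ is precisely the Lemma~\ref{lem-ko} subspace for $G_i$ with sub-projection $q' = \phi.p \prec q$ and translation $\phi.a \in A$, the subspace $U_i(\phi, p, a)$ is $\rho(T)$-invariant, isomorphic to $\k[G_i]$, and the restriction of $\rho(T)$ to it is conjugate to $M(G_i, T; \k)$. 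It therefore contributes exactly $\dim_\k \ker M(G_i, T; \k)$ to $\dim_\k \ker T_{\Psi_j}$.

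The hard part will be to show that as $(i, \phi, p, a)$ varies, these subspaces lie in $\k[\Psi_j]$ in direct sum and their total dimension is $|\Psi_j| \sum_{i \le N} \mu(\supp G_i)$, up to a Følner boundary error. Using Lemma~\ref{lem:compare_}(ii) one computes that $U_i(\phi, p, a) = U_i(\ga(q,r)\phi, p, a)$ for every $r \in G_i$ (the new vertex becomes $r$ and the identity $\ga(r,s) \cdot \ga(q,r) = \ga(q,s)$ matches the two spans), so the $\phi$-parameterization overcounts each distinct subspace by exactly $|G_i|$. Linear independence across distinct $(i, p, a)$ follows from pairwise orthogonality of the $\rho(p)$'s for $p \in K$, the chosen basis property within each $\rho(p) \k[F_j]$, orthogonality of distinct $G_i$'s, and the fact noted above that coincidence of $\Ga$-footprints $\Ga(q) \cdot \phi$ produces only the overcount already accounted for. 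Granting these combinatorial facts, I deduce
\[
\frac{\dim_\k \ker T_{\Psi_j}}{|\Psi_j|} \xrightarrow[j \to \infty]{} \sum_{i=1}^N \frac{\mu(\supp G_i)}{|G_i|} \dim_\k \ker M(G_i, T; \k) + O\Bigl(1 - \sum_{i \le N} \mu(\supp G_i)\Bigr),
\]
with the error vanishing as $N \to \infty$ by the exhausting property of $\{G_i\}$, which yields the theorem.
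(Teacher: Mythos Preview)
Your proposal is correct and follows essentially the same approach as the paper's proof: reduce to $l=1$, use a F{\o}lner set of the form $\Phi\cdot F$ in $\Ga\ltimes A$, refine via Lemma~\ref{lem-refin} to a partition $K$, build $\rho(T)$-invariant blocks from Lemma~\ref{lem-ko} indexed by triples $(\phi,p,a)$, observe the $|G_i|$-fold overcount coming from Lemma~\ref{lem:compare_}, and then prove the direct-sum statement (your ``hard part'' is exactly the paper's Lemma~\ref{lm-props-of-Y}) together with the count (the paper's Lemma~\ref{lem-scount}). The only organizational differences are cosmetic: you restrict to $\phi$ in the ``interior'' of $\Phi_j$ so that the blocks land inside $\k[\Psi_j]$, whereas the paper allows all $\phi\in\Phi$ at the cost of working in $\k[\wb\Phi\cdot F]$ and then comparing back to $\k[\Phi\cdot F]$; and you fix $F_j$ before applying Lemma~\ref{lem-refin}, whereas the paper chooses the finite subgroup $F$ \emph{after} obtaining $K$ so as to guarantee $K\subset\Z_2[F]$ (you should reverse that order, or enlarge $F_j$ once more after producing $K$). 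One small imprecision: your ``interior'' condition should use $\bigcup_{q\in G_i}\Ga(q)$ rather than just $\Ga(q_i)$ for the chosen base vertex, since the vertex $q$ with $\phi.p\prec q$ need not be $q_i$; the paper handles this by setting $\Si:=\bigcup_{q\in G}\Ga(q)$.
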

\vspace{5pt}
Before we start the proof, let us make two remarks.

\vspace{5pt}
\begin{remarks}\label{rem-pu}
\begin{enumerate}[(i),wide,itemsep=5pt]
\item The way Theorem~\ref{thm:compu-tational-tool} will be applied in Section~\ref{sec-app}  to prove Theorems \ref{thm:intro-finitely-generated}, \ref{thm:intro-lamplighter3} and \ref{thm-intro-positive} is essentially as follows. For each of the theorems we will take a specific element $T$ (or several such elements)  together with an explicit exhausting sequence $G_1,G_2,\ldots\,$ of simply-connected $T$-graphs from \cite{grabowski-on-turing-dynamical-systems-and-the-atiyah-problem} or \cite{arXiv:grabowski-2010-2}. 

\vspace{3pt}
The measures $\mu(\supp (G_i))$ can  be copied from
either~\cite{grabowski-on-turing-dynamical-systems-and-the-atiyah-problem}
or~\cite{arXiv:grabowski-2010-2}, and so in order to compute the right hand
side of~\eqref{eq-l}, it is enough to compute the dimensions $\dim_\k\ker
M(G_i,T; \k)$ of the adjacency operators on the labeled graphs $\cal
S(G_i,T;\k)$. This is an exercise in linear algebra, because the graphs $\cal
S(G_i,T;\Q)$ are explicitely described in
\cite{grabowski-on-turing-dynamical-systems-and-the-atiyah-problem} and
\cite{arXiv:grabowski-2010-2}, their edge labels are all elements of $\Z$, and the graphs $S(G_i,T;\k)$ arise from the graphs $S(G_i,T;\Q)$ by reducing the labels modulo $\chr(\k)$.

\vspace{3pt}
Thus Theorem~\ref{thm:compu-tational-tool} allows us to compute the left hand side of~\eqref{eq-l}, and then Proposition~\ref{prop-eck} allows us to deduce the existence of the CW-complexes with suitable homology gradients.

\item 

Let $\La$ be a residually finite group. Then for any $p\ssin\N_+$ the group $\ZZ_2^p\wr \La$ also is residually finite, and thus we can take a sequence $\De_1\supset \De_2\supset \ldots$ of finite-index normal subgroups of $\ZZ_2^p\wr\La$ such that $\bigcap_{i=1}^\infty \De_i = \{e\}$.

\vspace{3pt}
Let $T\ssin \k[\ZZ_2^p\wr \La]$ and let $T_i$ be the image of $T$ in   $\k[(\ZZ_2^p\wr\La)\,/\,\De_i]$. By the results of~\cite{MR2823074}, if $\La$ is amenable then
\beq\label{eq-rep}
\dim_{\k[\ZZ_2^p\wr \La]}\ker T  \,=\, \lim_{i\to \infty} \frac{\dim_\k\ker \rho(T_i)}{[\De:\De_i]}.
\eeq
In particular, the limit on the right hand side exists. On the other hand, if $\La$ is not amenable then in general it is not known if the limit on the right hand side of~\eqref{eq-rep} exists. 

\vspace{3pt}
Let us see how Theorem~\ref{thm:compu-tational-tool} provides a class of ``test cases'' for the question whether the limit on the right hand side of~\eqref{eq-rep} always exists. 
Let $\ga_1,\ldots, \ga_n\ssin \La$, let $U\ssin \Z[\ZZ_2^p]$ be the sum of all the elements of $\ZZ_2^p$, and let 
$$
T:= \sum_{i=1}^n \ga_i \frac{U}{2^p} + \frac{U}{2^p}\ga_i^{-1} \,\,\in\,\, \Z_2[\ZZ_2^p\wr \La].
$$ 
If $p$ is large enough then either \cite{Lehner_Neuhauser_Woess:On_the_spectrum_of} or \cite[Subsection 3.2]{grabowski-on-turing-dynamical-systems-and-the-atiyah-problem} shows that  there exists an exhausting sequence of simply-connected $T$-graphs. As such, we can compute the right hand side of \eqref{eq-l}, and in view of the equalities~\eqref{eq-l} and~\eqref{eq-rep} which are valid when $\La$ is amenable, it would be interesting to determine whether the right hand side of~\eqref{eq-rep} exists and is equal to the right hand side of~\eqref{eq-l}, for all choices of the sequence $\De_i$.
\end{enumerate}
\end{remarks}
\vspace{5pt}

\begin{proof}[Proof of Therem \ref{thm:compu-tational-tool}]

We give the proof just for one operator $T:= T_1 = \sum_{i=1}^m \ga_i f_i$, where $\ga_i \ssin \Ga$ and $f_i\ssin \Z_2[A]$. The general version does not present any additional difficulties, except for requiring more involved notation.

Let $\eps \ssin (0,1)$ and let $N$ be such that $\sum_{i=1}^N \mu(\supp(G_i)) >1\ssminus \eps$. Let $G := \bigcup_{i=1}^N G_i$ and let $\Si:=\bigcup_{q \in G} \Ga(q)$.

Let $\Phi\subset \Ga$ be any finite subset such that $|\partial_\Si(\Phi)| <\eps |\Phi|$ and let $\wb{ \Phi} := \Phi \cup \partial_\Si \Phi$.  By Lemma~\ref{lem-refin} there exists a finite set $K\subset\Z_2[A]$ of pairwise orthogonal projections such that we have the following two properties:

\begin{enumerate}[(i),nosep]
\item $\bigcup_{p\ssin K} \supp(\wh p) = X$, and 
\item  for all $p\ssin K$, $\phi\ssin \Phi$ and $q\ssin G$ we have either $\phi.p\prec q$ or $\phi.p \perp q$. 
\end{enumerate}

Let $F\subset A $ be a finite subgroup of $A$ such that $p\ssin \Z_2[F]$ for all $p\ssin K$. Let $T_{\Phi\sscdot F}$ be the compression of $\rho(T)$ to the subspace $\k[\Phi\sscdot F]$ of $\k[\Ga\ltimes A]$. Since finite subgroups of $A$ can be arranged into a F\o lner sequence  for $A$, by Lemma~\ref{lem-semi} it is enough to show that
\beq\label{eq-todo} 
	\left|\dim_\k\ker T_{\Phi\sscdot F} - \sum_{i=1}^N \frac{\mu (\supp (G_i) )}{|G_i|} |\Phi||F| \cdot \dim_\k \ker M(G_i,T;\k)\right| \le 3\eps\sscdot|\Phi||F|.
\eeq
This will occupy the rest of the proof.

Let us fix for each $p\ssin K$ a set $F(p)\subset F$ such that the elements $\rho(p)a$, $a\in F(p)$, form a basis of the subspace of $\k[A]$ spanned by the elements $\rho(p)a$, $a\in F$. Lemma~\ref{lem-basis} shows that 
\beq\label{eq-no}
|F(p)| = \mu(\supp(\wh p))\cdot |F|.
\eeq

Let $(\phi,p)\ssin \Phi\sstimes K$. We say that the pair  $(\phi,p)$ is \textit{lovely} if there is $q\ssin G$ such that $\phi.p  \prec q$. We say that a triple $(\phi,p,a)\ssin \Phi\sstimes K \sstimes F$ is \textit{lovely} if $(\phi,p)$ is lovely and $a\ssin F(p)$.  

If $p\ssin \Z_2[A]$ is a projection such that for some $q\ssin G$ we have $p\,{\prec}\, q$ then we define $Q(p) := q$ and $G(p) = G_i$, where $i$ is such that $Q(p)\ssin G_i$. In particular if $(\phi,p)$ is lovely then $Q(\phi.p)$ and $G(\phi.p)$ are well-defined.

If $(\phi,p,a)$ is a lovely triple, we define
$$ 
 Y(\phi, p, a) := \lin \big(  \ga \cdot \phi\cdot \rho(p) \cdot a \colon \,\, \ga\ssin \Ga(Q(\phi.p))\,\big) \,\subset \,\k[\wb\Phi\sscdot F].
$$ 

The following lemma follows directly from Lemma~\ref{lem-ko}.

\begin{lemma}\label{lem-act}
Let $(\phi,p,a)$ be a lovely triple. Then the space $Y(\phi, p, a)$ is $\rho(T)$-invariant and there exists a $\k$-linear isomorphism $Y(\phi,p,a) \to \k[G(\phi.p)]$ which intertwines $\rho(T)$ and $M(G(\phi.p),T;\k)$.\qed
\end{lemma}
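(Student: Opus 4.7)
The plan is to deduce Lemma~\ref{lem-act} from Lemma~\ref{lem-ko} after a short computation that moves the factor $\phi$ to the right past the other symbols. No new ideas are required; the whole argument is bookkeeping in the group ring $\k[\Ga\ltimes A]$.

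First I would use the commutation relations coming from the fact that $\phi\in\Ga$ acts on $A$ by automorphisms, namely $\phi\cdot\rho(p)=\rho(\phi.p)\cdot\phi$ and $\phi\cdot a=(\phi.a)\cdot\phi$. Applying these to each generator of $Y(\phi,p,a)$ gives
\[
\gamma\cdot\phi\cdot\rho(p)\cdot a \;=\; \gamma\cdot\rho(\phi.p)\cdot(\phi.a)\cdot\phi,\qquad \gamma\in\Ga(Q(\phi.p)),
\]
so $Y(\phi,p,a)=W'\cdot\phi$, where $W':=\lin\bigl(\gamma\cdot\rho(\phi.p)\cdot(\phi.a)\colon\gamma\in\Ga(Q(\phi.p))\bigr)$.

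Next I would observe that right multiplication by $\phi$ is a $\k$-linear bijection of $\k[\Ga\ltimes A]$ that commutes with left multiplication by $\rho(T)$. Therefore $Y(\phi,p,a)$ is $\rho(T)$-invariant if and only if $W'$ is, and any $\k$-linear isomorphism $W'\to\k[G(\phi.p)]$ intertwining $\rho(T)$ with $M(G(\phi.p),T;\k)$ transfers to the analogous isomorphism for $Y(\phi,p,a)$ by composition with the bijection $y\mapsto y\cdot\phi^{-1}$.

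Finally I would identify $W'$ with the space $W$ produced by Lemma~\ref{lem-ko}, applied to the simply-connected $T$-graph $G(\phi.p)$, the basepoint $q:=Q(\phi.p)$, the projection $q':=\phi.p$ (which satisfies $q'\prec q$ because $(\phi,p)$ is lovely), and the element $\phi.a\in A$. By Lemma~\ref{lem:compare_}(i), the map $r\mapsto\gamma(q,r)$ is a bijection $G(\phi.p)\to\Ga(Q(\phi.p))$, so $W'$ is precisely the span prescribed in Lemma~\ref{lem-ko}. The only subtlety in the whole argument is this reindexing step, which is exactly what Lemma~\ref{lem:compare_}(i) supplies; everything else is symbol manipulation in the semidirect-product group ring.
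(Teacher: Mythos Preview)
Your proof is correct and is exactly the explicit unpacking of what the paper means when it says the lemma ``follows directly from Lemma~\ref{lem-ko}'': you conjugate $\phi$ past $\rho(p)$ and $a$, use that right multiplication by $\phi$ commutes with left multiplication by $\rho(T)$, and then invoke Lemma~\ref{lem-ko} with $q=Q(\phi.p)$, $q'=\phi.p$, and $\phi.a$ in place of $a$, together with Lemma~\ref{lem:compare_}(i) to identify the index sets. There is no meaningful difference in approach.
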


The next lemma lists two important properties of lovely triples.

\begin{lemma}\label{lm-props-of-Y} 
\begin{enumerate}[(1),wide,itemsep=5pt]

\item Let $(\phi_1, p_1,a_1)$ and $(\phi_2, p_2,a_2)$ be two lovely triples. Then $Y(\phi_1, p_1,a_1) = Y(\phi_2, p_2,a_2)$ if and only if  $a_1 {\,=\,} a_2$, $p_1 {\,=\,}p_2$  and for some $\ga\ssin \Ga(Q(\phi_1.p_1))$ we have $\ga\phi_1 = \phi_2$.
\item Let $V$ be the subspace of $\k[\wb \Phi {\cdot} F]$ generated by the spaces $Y(\phi,p,a)$, where $(\phi, p,a)$ runs through all lovely triples. Then $V$ is a direct sum of all the different spaces $Y(\phi,p,a)$.

\end{enumerate}

\end{lemma}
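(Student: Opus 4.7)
My plan is to exploit the fact that every element of $\k[\Ga\ltimes A]$ decomposes into a ``$\Ga$-part'' and an ``$A$-part''. In particular, the $\Ga$-support of $Y(\phi,p,a)$ is exactly $\Ga(Q(\phi.p))\cdot\phi$, which by Lemma~\ref{lem:compare_}(i) is a faithful image of $\Ga(Q(\phi.p))$. Both parts of the lemma then reduce to comparing $\Ga$-supports and then comparing $A$-components via linear independence of the vectors $\rho(p)a$ in $\k[A]$.

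\textbf{Part (1).} For the ``if'' direction, assume $p_1=p_2=:p$, $a_1=a_2=:a$, and $\phi_2=\ga\phi_1$ with $\ga\in\Ga(Q(\phi_1.p))$. Then $Q(\phi_2.p)=\ga.Q(\phi_1.p)$, and Lemma~\ref{lem:compare_}(ii) gives $\Ga(Q(\phi_2.p))\cdot\ga=\Ga(Q(\phi_1.p))$, so the spanning sets of $Y(\phi_1,p,a)$ and $Y(\phi_2,p,a)$ coincide term by term. For the ``only if'' direction, equality of $\Ga$-supports yields $\phi_2=\ga\phi_1$ for some $\ga\in\Ga(Q(\phi_1.p_1))$. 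Writing $\phi_1\rho(p_1)a_1\in Y(\phi_2,p_2,a_2)$ and reading off the $\phi_1$-component (noting that distinct $\delta\in\Ga(Q(\phi_2.p_2))$ give distinct $\Ga$-supports $\{\delta\phi_2\}$) produces a relation $\rho(p_1)a_1=c\,\rho(p_2)a_2$ in $\k[A]$. Multiplying by $\rho(p_1)$ and using $\rho(p_1)\rho(p_2)=0$ when $p_1\ne p_2$ (orthogonality of $K$), together with $\rho(p_1)\ne 0$ from Lemma~\ref{le-comm}, forces $p_1=p_2$. The defining linear independence of $\{\rho(p_1)a:a\in F(p_1)\}$ then gives $a_1=a_2$ (and $c=1$).

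\textbf{Part (2).} Given $\sum_i y_i=0$ with $y_i\in Y(\phi_i,p_i,a_i)$ and the subspaces pairwise distinct, expand $y_i=\sum_{\delta\in\Ga(Q(\phi_i.p_i))}c_{i,\delta}\,\delta\phi_i\rho(p_i)a_i$ and project onto the $\eta$-component for each $\eta\in\Ga$. The relation becomes
\[
\sum_{i\in I(\eta)} c_{i,\eta\phi_i^{-1}}\,\rho(p_i)a_i \,=\, 0 \quad\text{in }\k[A],
\]
where $I(\eta):=\{i:\eta\phi_i^{-1}\in\Ga(Q(\phi_i.p_i))\}$. I would then conclude from two facts: (a) for each $\eta$ the pairs $\{(p_i,a_i):i\in I(\eta)\}$ are pairwise distinct, and (b) the family $\{\rho(p)a:p\in K,\ a\in F(p)\}$ is $\k$-linearly independent in $\k[A]$. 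Fact (b) is routine: orthogonality of $K$ makes the idempotents $\rho(p)$ pairwise annihilating, so the subspaces $\im\rho(p)$ sit in internal direct sum, and within each such subspace the chosen vectors form a basis by construction.

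\textbf{Main obstacle.} Fact (a) is the only non-formal step. Suppose $p_i=p_j=:p$, $a_i=a_j=:a$, and $i,j\in I(\eta)$, and write $\eta=\ga_i\phi_i=\ga_j\phi_j$ with $\ga_i\in\Ga(Q(\phi_i.p))$ and $\ga_j\in\Ga(Q(\phi_j.p))$. The nonzero projection $\eta.p$ is then dominated by both $\ga_i.Q(\phi_i.p)$ and $\ga_j.Q(\phi_j.p)$; by Lemma~\ref{lem:compare_}(i) both of these lie in $G$, and pairwise orthogonality of the elements of $G$ forces them to coincide. A double application of Lemma~\ref{lem:compare_}(ii) then yields $\ga_j^{-1}\ga_i\in\Ga(Q(\phi_i.p))$ with $\phi_j=(\ga_j^{-1}\ga_i)\phi_i$, so by part (1) the triples $(\phi_i,p,a)$ and $(\phi_j,p,a)$ are equivalent, contradicting $Y(\phi_i,p_i,a_i)\ne Y(\phi_j,p_j,a_j)$.
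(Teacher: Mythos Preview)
Your proposal is correct and follows essentially the same route as the paper. Both arguments decompose $\k[\Ga\ltimes A]$ along the $\Ga$-direction, project onto a fixed $\eta$-component, then separate the resulting sum in $\k[A]$ first by orthogonality of the projections in $K$ and then by the defining linear independence of $\{\rho(p)a:a\in F(p)\}$; finally, both use the simply-connected structure (via Lemma~\ref{lem:compare_}) to show that at most one index survives. The paper merges the ``only if'' direction of (1) into the proof of (2), whereas you treat (1) separately and state facts (a) and (b) explicitly, but this is purely organizational.
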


\begin{proof}

The "$\isimplied$" direction of (1) is easy to check. Let us now prove (2) and the other direction of (1). Let $I\ssin \N_+$ and let $(\phi_1,p_1,a_1), (\phi_2,p_2,a_2), \ldots\,,(\phi_I,p_I,a_I)$ be a sequence of lovely triples with the following property: for all distinct $i,j\ssin I$ and all $\ga\ssin \Ga(Q(\phi_i.p_i))$  we have $(\ga\sscdot \phi_i,p_i,a_i) \neq  (\phi_j,p_j,a_j)$.

For $i\ssin [I]$ let $v_i\ssin Y(\phi_i,p_i,a_i)$ be such that $\sum_{i=1}^I v_i = 0$. In order to finish the proof we need to show that for all $i\ssin [I]$ we have $v_i=0$.

Let us start by writing each $v_i$ in the standard basis:
\beq\label{eq-deco}
    v_i = \sum_{\ga\ssin  \Ga(Q(\phi_i.p_i))} s(i,\ga) \sscdot \ga \cdot \phi_i \cdot \rho(p_i) \cdot a_i, 
\eeq
where $s(i,\ga)\ssin \k$. Thus we have 
\beq\label{eq-summ}
\sum_{i=1}^I\,\, \sum_{\ga\ssin \Ga(Q(\phi_i.p_i))} \,\,s(i,\ga) \cdot \ga \cdot \phi_i \cdot \rho(p_i) \cdot a_i =0.
\eeq

Let us fix $\psi \ssin \wb\Phi$, $p\ssin G$ and $a\ssin F(p)$.  Let $A\subset [I]$ be the set of those $i$ for which $p_i = p$, $a_i =a$ and for some $\ga\ssin  \Ga(Q(\phi_i.p_i))$ we have $\ga\sscdot \phi_i = \psi$. Clearly such $\ga$ is unique and we denote it by $\ga_i$. Let us argue that
\beq\label{rns}
\sum_{i\ssin A} s(i,\ga_i) \cdot \ga_i \cdot \phi_i \cdot \rho(p_i) \cdot a_i =0.
\eeq
Indeed, it is clear that in the sum~\eqref{eq-summ} we may take all those summands for which $\ga\sscdot\phi_i = \psi$ and still obtain $0$. In the resulting sum we may take only those summand for which $p_i= p$ and still obtain $0$, since for $r\ssin G$ distinct from $p$ we have $p\perp r$.  And in that sum we may take only those summands for which $a_i =a$ and still obtain $0$ by the definition of $F(p)$. Thus we see that \eqref{rns} holds.

Since $\psi$, $p$ and $a$ are arbitrary, it is enough to show that for all $i\ssin A$ we have that $s(i,\ga_i) = 0$.

Since for all $i\ssin A$ we have $\ga_i\sscdot \phi_i = \psi$, we also have $\ga_i\sscdot \phi_i.p = \psi.p$. It follows that for all $i\ssin A$ we have $\ga_i.Q(\phi_i.p) = Q(\psi.p)$.  Hence, for all $i\ssin A$ we have $G(\phi_i.p) = G(\psi.p)$ and $\ga_i^{-1} \ssin \Ga(Q(\psi.p))$. Thus for all $i,j\ssin A$ we have $\ga_i^{-1}\sscdot \ga_j \ssin \Ga(Q(\phi_j.p_j))$. 

But clearly $\ga_i^{-1}\sscdot \ga_j \sscdot \phi_j = \phi_i$ so by our initial  assumption on the sequence $(\phi_1,p_1,a_1)$, $(\phi_2,p_2,a_2),$ $\ldots\,,(\phi_I,p_I,a_I)$, we see that in fact $|A|=1$ and hence $s(i,\ga_i)=0$ for all $i\ssin A$, as needed. This concludes the proof.
\end{proof}

We proceed to show that the dimension of the space $V$ defined in the previous lemma is large.

\begin{lemma}\label{lem-scount}
For every $i\ssin[N]$ the set of lovely triples $(\phi,p,a)\ssin \Phi\sstimes K\sstimes F$ with the property $G(\phi.p) = G_i$ has cardinality  
$$
\mu(\supp(G_i))\sscdot |F||\Phi|. 
$$
\end{lemma}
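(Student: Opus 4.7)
The plan is to fix $i\in [N]$ and break the count of lovely triples by first summing over $\phi \in \Phi$, then over $p\in K$ with $\phi.p \prec q$ for some $q \in G_i$, and finally multiplying by $|F(p)|$. The point is that properties (i) and (ii) of the set $K$ were designed so that, for each fixed $\phi \ssin \Phi$, the supports $\supp(\widehat{\phi.p})$ for $p\ssin K$ partition $X$ and each of them is either entirely inside or entirely disjoint from $\supp(\hat q)$ for every $q \ssin G$. From this it should follow that
$$
\sum_{\substack{p\in K\\ \phi.p\prec q \text{ for some } q\in G_i}} \mu(\supp(\widehat{\phi.p})) \,=\, \mu(\supp(G_i)),
$$
because the $\phi.p$'s that fit inside $\supp(G_i)=\bigcup_{q\in G_i}\supp(\hat q)$ must cover $\supp(G_i)$ exactly.

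Second, since the dual action $\Ga \actson X$ is measure-preserving, we have $\mu(\supp(\widehat{\phi.p}))=\mu(\supp(\hat p))$. Combined with Lemma~\ref{lem-basis}, which gives $|F(p)|=\mu(\supp(\hat p))\sscdot|F|$, this would yield, for each fixed $\phi$,
$$
\sum_{\substack{p\in K\\ \phi.p\prec q \text{ for some } q\in G_i}} |F(p)| \,=\, |F|\sscdot \mu(\supp(G_i)).
$$
Summing over the $|\Phi|$ choices of $\phi$ then gives the required cardinality $\mu(\supp(G_i))\sscdot|F||\Phi|$.

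The only step that needs genuine verification is the first displayed identity, i.e.\ that $\{\supp(\widehat{\phi.p})\}_{p\in K}$ is a measurable partition of $X$ and that the subcollection whose union lies in $\supp(G_i)$ actually exhausts $\supp(G_i)$. Partitioning follows because the $p\in K$ are pairwise orthogonal projections summing to $1$, and $\wh{\phantom{x}}$ is multiplicative and sends the action of $\phi$ on $\Z_2[A]$ to the dual action on $\fun(X,\Z_2)$; orthogonality and completeness are both preserved. Exhaustion follows from property (ii): for each $p\ssin K$ and $q\ssin G_i$, either $\supp(\widehat{\phi.p})\subset\supp(\hat q)$ or they are disjoint, so every point of $\supp(G_i)$ belongs to $\supp(\widehat{\phi.p})$ for exactly one $p\ssin K$ that lies in the summation range. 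This is the only mildly nontrivial part of the argument; once it is in place, the rest is bookkeeping and a single application of Lemma~\ref{lem-basis}.
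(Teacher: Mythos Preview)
Your argument is correct and follows essentially the same route as the paper's proof: fix $\phi$, observe that the $\supp(\widehat{\phi.p})$ for $p\in K$ partition $X$ and refine $\supp(G_i)$, use measure-preservation to replace $\mu(\supp(\widehat{\phi.p}))$ by $\mu(\supp(\hat p))$, invoke \eqref{eq-no} (i.e.\ Lemma~\ref{lem-basis}), and then sum over $\phi$. You have simply spelled out the partition/exhaustion step more carefully than the paper does.
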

\begin{proof} Indeed, let us fix $i\ssin =N$ and $\phi\ssin \Phi$. Let $L\subset K$ be the set of those $p$ for which $\phi.p \prec q$ for some $q\ssin G_i$. Since the action $\Ga\ssactson X$ is measure-preserving, we have $\sum_{p\ssin L} \mu(\supp(\wh p)) = \mu(\supp(G_i)$, and now the claim follows from \eqref{eq-no}.
\end{proof}

For each $i\ssin [N]$ let $V_i$ be a maximal set of lovely triples with the following two properties:
\begin{enumerate}[(i),nosep]
\item for all $(\phi,p,a)\ssin V_i$ we have $G(\phi.p) = G_i$, and 
\item for distinct lovely triples $(\phi_1,p_1,a_1)$ and $(\phi_2,p_2,a_2)$ in $V_i$ we have that the spaces $Y(\phi_1,p_1,a_1)$ and $Y(\phi_2,p_2,a_2)$ intersect trivially.
\end{enumerate}

Now Lemmas~\ref{lm-props-of-Y} and~\ref{lem-scount} show that 
$$
|V_i| \ge \mu(\supp(G_i))\sscdot \frac{|F||\Phi|}{|G_i|}. 
$$

Let us choose for each $i$ a subset $W_i\subset V_i$ such that $|W_i| = \mu(\supp(G_i))\sscdot \frac{|F||\Phi|}{|G_i|}$, and let $W$ be the span of those spaces $Y(\phi,p,a)$ such that $(\phi,p,a)$ belongs to $W_i$ for some $i$.

Let $T_W$ be the restriction of $\rho(T)$ to the $\rho(T)$-invariant space $W$.  By Lemmas~\ref{lem-act}  and~\ref{lm-props-of-Y} we have
\begin{multline}\label{eq-fin}
\dim_\k\ker T_W = \sum_{i=1}^N |W_i|\sscdot \dim_\k \ker M(G_i,T;\k) = \\
=\sum_{i=1}^N \mu(\supp(G_i))\sscdot \frac{|F||\Phi|}{|G_i|} \dim_\k \ker M(G_i,T;\k).
\end{multline}

Note that if $(\phi,p,a)\ssin V_i$ then in particular $\dim_\k Y(\phi,p,a) = |G_i|$. It follows that the dimension of the space $W$ is at least
\beq\label{eq-g}
\sum_{i=1}^N |W_i|\sscdot |G_i| =\sum_{i=1}^N \mu(\supp(G_i))\sscdot |\Phi||F| > (1{-}\eps)\sscdot |\Phi||F|.
\eeq

Hence $W$ is a subspace of  $\k[\wb\Phi\sscdot F]$ of codimension at most $2\eps\sscdot|\Phi||F|$. It follows that the codimension of $W\cap \k[\Phi\sscdot F]$ in $\k[\Phi\sscdot F]$ is at most $2\eps\sscdot|\Phi||F|$ as well. On the other hand the codimension of $W\cap \k[\Phi\sscdot F]$ in $W\subset \k[\wb\Phi\sscdot F]$ is at most $\eps\sscdot |\Phi||F|$. It follows that 
$$
|\dim_\k \ker T_W - \dim_\k\ker T_{\Phi\sscdot F}| \le 3\eps,
$$
which together with~\eqref{eq-fin} establishes~\eqref{eq-todo}, which finishes the proof.
\end{proof}

\section{Applications}\label{sec-app}

Now that the computational tool, Theorem \ref{thm:compu-tational-tool}, is available, we can prove Theorems \ref{thm:intro-finitely-generated}, \ref{thm:intro-lamplighter3} and \ref{thm-intro-positive} along the lines described in Remark~\ref{rem-pu}(i). For the sake of  streamlining the discussion in this section, all the linear algebra computations are gathered in Section \ref{sec-lin-algebra}.

In order to prove Theorem \ref{thm:intro-finitely-generated}, we chose to use the group ring elements from  \cite{grabowski-on-turing-dynamical-systems-and-the-atiyah-problem}, but the approach from \cite{arxiv:pichot_schick_zuk-2010} could be used just as well.

The group ring element used in the proof of Theorem \ref{thm:intro-lamplighter3} is also taken from \cite{grabowski-on-turing-dynamical-systems-and-the-atiyah-problem}. It is unclear whether \cite{arxiv:pichot_schick_zuk-2010} could be used to construct a \textit{universal} cover with an irrational $\k$-homology gradient, for the reason explained in Remark~\ref{rem-why}. The group ring element  from \cite{grabowski-on-turing-dynamical-systems-and-the-atiyah-problem} can be used because the group $\ZZ_2\wr \ZZ$ can be explicitly embedded into a finitely presented $2$-step solvable group (see \cite{ Grigorchuk_Linnel_Schick_Zuk}).

The group ring element used in the proof of Theorem \ref{thm-intro-positive} is taken from \cite{arXiv:grabowski-2010-2}.

\subsection{Proof of Theorem \ref{thm:intro-finitely-generated}}\label{subsec-nice}\mbox{}

Let $a\ssin \N$ and  $b\in \{0,1\}$. A \textit{loop-tree graph of type $(a,b)$} is an oriented rooted graph which arises from a tree by adding self-loops at some vertices, such that 
\begin{enumerate}
\item all edges which are not self-loops are directed towards the root,
\item if a vertex is \textit{internal}, i.e.~it has both incoming and outgoing edges in the underlying tree, then it has a self-loop,
\item the root has a self-loop if and only if $b=1$, and 
\item if there are $l$ \textit{leaves}, i.e.~vertices with no incoming edges in the underlying tree, then exactly $l{-}a$ of them have self-loops.
\end{enumerate}

The loops at leaves will be called \textit{external loops}, the loops at internal vertices will be called \textit{internal loops}, and the loop at the root will be called the \textit{root loop}.

If $G$ is a loop-tree graph of type $(a,b)$ and $\k$ is a field then we define two graphs $\al(G;\k)$ and $\be(G;\k)$ with edges labeled by $\k$, as follows. The sets of vertices of both $\al(G;\k)$ and $\be(G;\k)$ are equal to the set $V(G)$ of vertices of $G$. We let $(v,w,s)$ be an edge of  $\al(G;\k)$  if $(v,w)$ is an edge of $G$ and $s=1$. 

The edges of $\be(G;\k)$ are defined depending on the value of $b$, as follows. If $b=0$ then we let $(v,w,s)$ be an edge of $\be(G;\k)$ if $(v,w)$ is an edge of $G$ and $s=0$. If $b=1$ then we let $(v,w,s)$ be an edge of $\be(G;\k)$ if $(v,w)$ is an edge of $G$ and either (i) $(v,w)$ is not the root self loop and $a=0$, or (ii) $(v,w) = (v,v)$ is the root loop and $a=1$. 

Let $M(\al;\k)$  and $M(\be;\k)$ be the adjacency operators on, respectively, $\al(G;\k)$ and $\be(G;\k)$. Thus both  $M(\al;\k)$ and $M(\be;\k)$ are $\k$-linear operators on $\k[V(G)]$.

\begin{figure}[h]%
  \resizebox{0.80\textwidth}{!}{\input{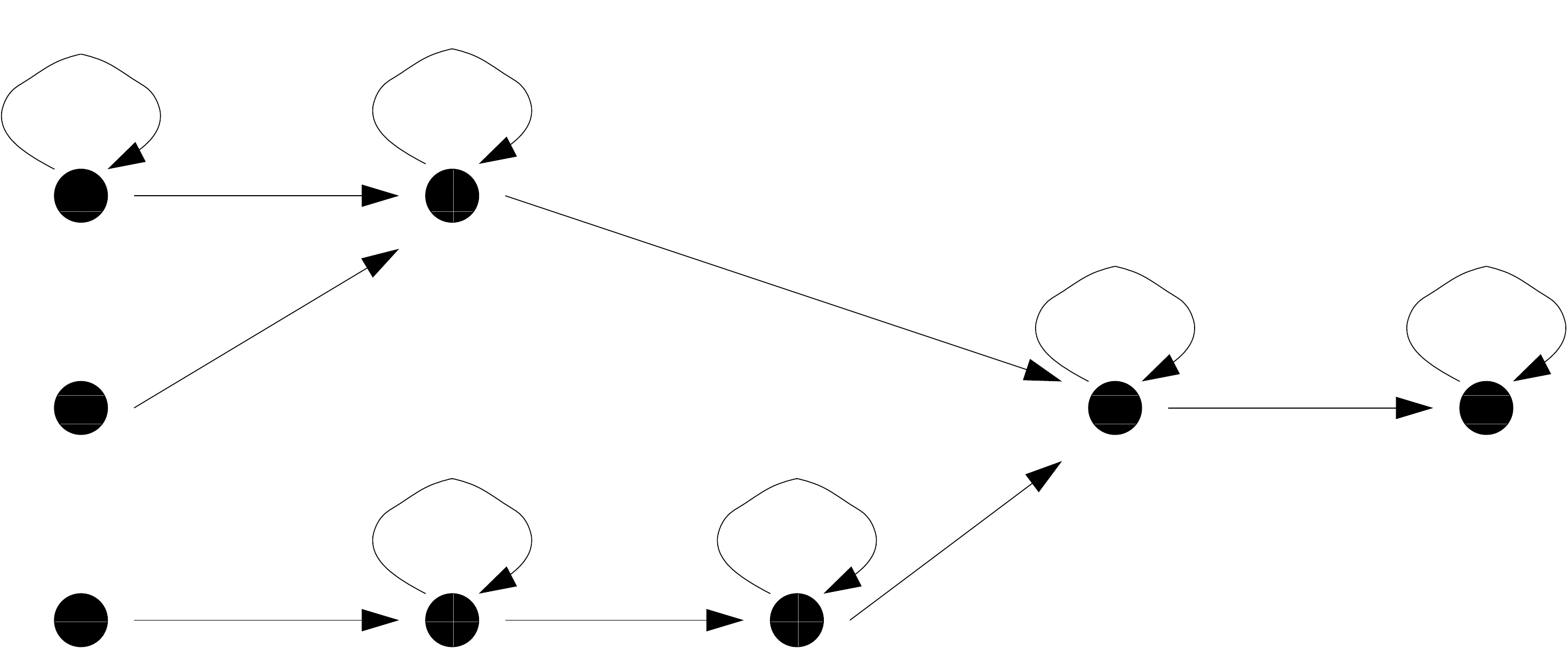_t}}
  \caption{The graph $G_\al$ corresponding to a loop-tree graph of type $(2,1)$}
  \label{fig-g-alpha}
\end{figure}

The proof of the following lemma is deferred to Section \ref{sec-lin-algebra}.
\begin{lemma}\label{lem:nice-graph-indep-of-k}
Let $G$ be a loop-tree graph of type $(a,b)$. Then
$$
    \dim_\k (\ker M(\al;\k) \cap \ker M(\be;\k))  = a - b.
$$
In particular the left-hand side is independent of $\k$. \qed
\end{lemma}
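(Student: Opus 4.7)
My plan is to reduce the lemma to an explicit finite-dimensional linear algebra problem on $\k[V(G)]$ and to perform a case analysis on $(a,b)$, exploiting the fact that all coefficients appearing in the relevant equations lie in $\{-1,0,1\}$, which automatically yields independence of $\k$.

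First I would describe $\ker M(\al;\k)$ explicitly by reading off the adjacency matrix row by row. An element $\sum_{v\in V(G)} c_v\,v$ lies in the kernel precisely when, for every vertex $w$, we have
\[
\sum_{v\text{ child of }w} c_v \;+\; \epsilon_w\, c_w \;=\; 0,
\]
where $\epsilon_w\in\{0,1\}$ indicates whether $w$ carries a self-loop. By condition~(4), each of the $l-a$ leaves with a self-loop forces $c_w=0$; by condition~(2), every internal $w$ has its equation $c_w=-\sum_{\text{children}} c_v$ determine $c_w$ from its children. Propagating from the leaves upwards, one sees that only the $a$ leaves without self-loops contribute free parameters, together possibly with a free value at the root, and the root's equation either determines $c_{\text{root}}$ (when $b=1$) or imposes a single linear constraint on the free parameters (when $b=0$).

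Next I would compute $\ker M(\be;\k)$ and intersect it with $\ker M(\al;\k)$, splitting according to the four cases dictated by the definition of $\be$: $(b=0)$, $(b=1,a=0)$, $(b=1,a=1)$, and $(b=1,a\geq 2)$. In each case, $M(\be;\k)$ is either zero or carves out exactly the additional linear constraint(s) needed to bring the dimension of the intersection down to $a-b$. Because every substitution $c_w = -\sum_{\text{children}} c_v$ introduces only $\pm 1$ coefficients, the resulting constraint on the $a$ leaf-parameters has coefficients in $\{-1,+1\}$, which are nonzero in any $\k$ with $\chr(\k)\neq 2$. This is what makes the rank of the combined linear system independent of the characteristic of~$\k$.

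The main obstacle is the case analysis around the definition of $\be(G;\k)$: one must carefully track, in each of the four subcases, whether the constraint imposed by $M(\be;\k)$ is independent of, or redundant with, the constraints already coming from $M(\al;\k)$ (in particular the boundary equation at the root when $b=0$). Once the $\pm 1$ structure of every constraint is established, the count $a-b$ follows by elementary linear algebra, and the characteristic-independence is automatic from the integrality of the coefficients.
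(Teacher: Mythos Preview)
Your approach is correct and follows the same line as the paper: first determine $\ker M(\alpha;\k)$ (dimension $a$, parametrized by the values at the $a$ loop-free leaves), then intersect with $\ker M(\beta;\k)$. The paper is more concise: it writes down explicit elements $\xi(v)=v-n(v)+n^2(v)-\cdots\pm r$ of $\ker M(\alpha;\k)$, one for each loop-free leaf, and for $b=1$ simply observes that $\ker M(\beta;\k)=\{c_r=0\}$ is a hyperplane not containing $\ker M(\alpha;\k)$, while for $b=0$ it notes $M(\beta;\k)=0$. Your leaf-to-root propagation is precisely the computation that produces and verifies these vectors; indeed your bookkeeping (a free value at the root together with one root-constraint when $b=0$) is arguably more careful than the paper's one-line claim that the $\xi(v)$ span.

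One correction: your four-case split comes from a misreading of the definition of $\beta(G;\k)$. The symbol ``$a$'' in clauses (i) and (ii) there denotes the edge label (it is a typo for $s$), not the type parameter. Hence $\beta(G;\k)$ depends only on $b$: for $b=0$ every label is $0$, and for $b=1$ only the root self-loop carries label $1$. So only the two cases $b=0$ and $b=1$ are needed; in particular $(b=1,a=0)$ would force $a-b=-1$ and is not a case to be treated. Also, the remark that $\pm 1$ are nonzero ``in any $\k$ with $\chr(\k)\neq 2$'' is stronger than needed: $\pm 1\neq 0$ in every field.
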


\begin{theorem}\label{thm-black-box-sigma}
Let $A := \oplus_\Z \ZZ_2$. There exists an amenable group $\Ga$ such that for every $\Si\,{\subset}\, \N$ there exists an action $\Ga\ssactson A$, elements $S,T\in \Z_2 [\Ga\ltimes A]$ and an exhausting sequence $G_1,G_2,\ldots$ of simply-connected  $(S,T)$-graphs such that 
$$
 \sum_i \frac{\mu(\supp(G_i))}{|G_i|}\dim_\Q \Big( \ker M(G_i,S;\Q)\cap \ker M(G_i,T;\Q)\Big) =  \frac1{64}- \frac2{8^3} \sum_{i\in \Si} \frac1{2^i}.
$$
Furthermore, for each $i$ there is a loop-tree graph $G$ of some type such that for all fields $\k$ we have $\cal S(G_i,S;\k) = \al(G;\k)$ and $\cal S(G_i,T;\k) = \be(G;\k)$.
\end{theorem}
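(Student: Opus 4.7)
The plan is to invoke the Turing machine based construction from \cite{grabowski-on-turing-dynamical-systems-and-the-atiyah-problem}, which already produces, for every $\Si\subset \N$, an amenable group $\Ga$, an action $\Ga \ssactson A$, a single element $T_0\ssin \Z_2[\Ga\ltimes A]$ and an exhausting sequence $G_1,G_2,\ldots$ of simply-connected $T_0$-graphs such that the corresponding labeled graphs $\cal S(G_i,T_0;\Q)$ are precisely the directed graphs underlying loop-tree graphs (possibly after a straightforward rescaling of measures). The group $\Ga$ can be taken independent of $\Si$ because the encoding of $\Si$ is absorbed into the action $\Ga \ssactson A$, not into $\Ga$ itself.

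The first step is to replace $T_0$ by a pair $(S,T)$ whose ``masking'' of the generators $\ga_1,\ldots,\ga_m$ produces, on each finite graph $\cal R(G_i,T_0)$, precisely the two label patterns $\al(G;\k)$ and $\be(G;\k)$ required by the definition of a loop-tree graph of type $(a_i,b_i)$. Concretely, one writes $T_0 = \sum_k \ga_k f_k$ and chooses subsets of the index set that distinguish ``tree edges together with internal/leaf loops'' (giving $S$, whose coefficient functions $\wh g_k$ take value $1$ on $\supp(\wh p)$ exactly for the pairs $(p,\ga_k)$ that are edges of $\al(G;\k)$) from ``root loop / unmarked leaf loops'' (giving $T$, defining $\be(G;\k)$). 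Because an $S$- and $T$-graph condition only concerns which supports of $\wh p$ meet which supports of $\wh f_k$, each $G_i$ remains simultaneously a simply-connected $S$-graph and $T$-graph; this is the ``$S,T$ share the same $T$-graphs'' remark made after the definition of $T$-graphs in Section~\ref{sec-definition}.

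Next, the identification with loop-tree graphs gives by Lemma~\ref{lem:nice-graph-indep-of-k} that
\begin{equation*}
\dim_\Q\bigl(\ker M(G_i,S;\Q)\cap \ker M(G_i,T;\Q)\bigr) = a_i - b_i,
\end{equation*}
so the left-hand side of the displayed formula becomes $\sum_i \frac{\mu(\supp(G_i))}{|G_i|}(a_i-b_i)$. The numerical computation of this sum is then a direct translation of the bookkeeping already carried out in \cite[Section 3]{grabowski-on-turing-dynamical-systems-and-the-atiyah-problem}: for the given Turing machine encoding of $\Si$, the measures $\mu(\supp(G_i))$ sort into a uniform ``background'' contribution giving $\frac{1}{64}$ and a $\Si$-indexed contribution where each $i\ssin \Si$ contributes a loop-tree of type whose $a-b$ drops by one, weighted by the geometric factor $\frac{2}{8^3}\cdot 2^{-i}$ coming from the cell sizes in the construction.

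The main obstacle is verifying this last numerical identity: one has to trace, for each element of the exhausting sequence, whether it corresponds to a type $(a,b)$ with $a-b$ equal to $1$ or to $0$, and show that the presence or absence of $i$ in $\Si$ toggles the type of a graph of measure $\frac{2}{8^3}\cdot 2^{-i}$. This is exactly the computation in Section~3 of \cite{grabowski-on-turing-dynamical-systems-and-the-atiyah-problem}, so the remaining work is to check that the marking of edges by $\al$ vs.\ $\be$ matches, loop by loop, the distinction between the ``fixed'' part of the graph $\cal S(G_i,T_0;\Q)$ and the ``$\Si$-dependent'' external loops, which is what guarantees both the formula and the final ``furthermore'' clause that $\cal S(G_i,S;\k)=\al(G;\k)$ and $\cal S(G_i,T;\k)=\be(G;\k)$ hold uniformly in $\k$.
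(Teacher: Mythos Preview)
Your proposal correctly identifies that the theorem is a repackaging of the construction in \cite{grabowski-on-turing-dynamical-systems-and-the-atiyah-problem}, and the broad strokes (Turing machine encoding of $\Si$, loop-tree graphs, reading off the measures) are right. However, you have missed the single point the paper actually devotes its proof to: amenability of $\Ga$. You assert that the construction in \cite{grabowski-on-turing-dynamical-systems-and-the-atiyah-problem} ``already produces\ldots an amenable group $\Ga$'', but this is false. The group used in \cite[Section~5.1]{grabowski-on-turing-dynamical-systems-and-the-atiyah-problem} is
\[
\Ga = [(\aut(M_1)\wr\ZZ) \ast \ZZ_2]\times \aut(M_2),
\]
which contains a free product and is therefore non-amenable. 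The paper's proof consists precisely of observing that only the subgroup generated by a single automorphism $\be'\in\aut(M_1)$ is actually used, and that the resulting action factors through the amenable group $[(\langle\be'\rangle\times \ZZ_2)\wr\ZZ]\times \aut(M_2)$. Without this step your $\Ga$ is not amenable, the kernel gradient $\dim_{\k[\Ga\ltimes A]}$ is not defined, and the downstream applications (Corollary~\ref{cory-main-proposition} and Theorem~\ref{thm:intro-finitely-generated}) collapse.

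A secondary point: your description of manufacturing the pair $(S,T)$ from a single $T_0$ by ``choosing subsets of the index set'' is vague and not how the paper proceeds. The pair $(S,T)$ and the identification of the $\cal S$-graphs with $\al(G;\k)$, $\be(G;\k)$ already come packaged in the proof of \cite[Theorem~4.3]{grabowski-on-turing-dynamical-systems-and-the-atiyah-problem} together with the example in \cite[Section~5.1]{grabowski-on-turing-dynamical-systems-and-the-atiyah-problem}; there is no need to split anything after the fact, and your sketch of how to do so does not obviously produce the correct $\be$-labeling (which depends on whether $b=0$ or $b=1$ in a way that is not captured by a static partition of the generators). The numerical bookkeeping is also in Section~5.1 (not Section~3) of that reference.
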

\begin{proof}
This result can be deduced from the proof of \cite[Theorem 4.3]{grabowski-on-turing-dynamical-systems-and-the-atiyah-problem} and the example in \cite[Section 5.1]{grabowski-on-turing-dynamical-systems-and-the-atiyah-problem}. 

The only difficulty in the derivation of the above theorem is that the group $\Ga$ from \cite[Section 5.1]{grabowski-on-turing-dynamical-systems-and-the-atiyah-problem} is defined as
$$
\Ga := [(\aut(M_1)\wr\ZZ) \ast \ZZ_2]\times \aut(M_2),
$$
where $\ast$ denotes the free product, and $M_1$ and $M_2$ are some finite groups. As such $\Ga$ is not amenable. However, in fact only the subgroup
$$
\Ga_1 := [(\langle\be'\rangle\wr\ZZ) \ast \ZZ_2]\times \aut(M_2),
$$
is used in \cite[Section 5.1]{grabowski-on-turing-dynamical-systems-and-the-atiyah-problem}, where $\be'$ is certain element of $\aut(M_1)$ defined in \cite[Section 5.1]{grabowski-on-turing-dynamical-systems-and-the-atiyah-problem}. Furthermore, a direct check shows that the action $\Ga_1\actson A$ factors through the action of the \textit{amenable} group 
$$
\Ga_2:= [\langle\be'\rangle\times \ZZ_2)\wr\ZZ]\times \aut(M_2),
$$
and so in all of \cite[Section 5.1]{grabowski-on-turing-dynamical-systems-and-the-atiyah-problem} the amenable group $\Ga_2$ can be used instead of $\Ga$.
\end{proof}

To stress that we take an action $\Ga\ssactson A$ guaranteed by the previous theorem for a set $\Si\subset \N$, we will denote the corresponding semidirect product by $\Ga\ltimes_\Si A$.

\begin{cory}\label{cory-main-proposition} Let $\k$ be a field such that $\chr(\k)\neq 2$, let $\Si\subset \N$ and let $S$ and $T$ be as in the previous theorem. Then
\beq\label{eq-bside}
\dim_{\k[\Ga\ltimes_\Si A]} \ker \left(
\left[\begin{array}{c}
\rho(S)\\
\rho(T)
\end{array}\right]
\right)
=
\frac1{64}- \frac2{8^3} \sum_{i\in \Si} \frac1{2^i}.
\eeq
\end{cory}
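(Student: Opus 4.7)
The plan is to combine the three ingredients already at hand: the computational tool (Theorem~\ref{thm:compu-tational-tool}), the explicit exhausting sequence provided by Theorem~\ref{thm-black-box-sigma}, and the characteristic-independence statement Lemma~\ref{lem:nice-graph-indep-of-k}.

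First I would apply Theorem~\ref{thm:compu-tational-tool} to the list of two operators $(S,T)$ and the exhausting sequence $G_1,G_2,\ldots$ of simply-connected $(S,T)$-graphs guaranteed by Theorem~\ref{thm-black-box-sigma}. This immediately rewrites the left-hand side of~\eqref{eq-bside} as
\[
\sum_{i=1}^\infty \frac{\mu(\supp(G_i))}{|G_i|}\,\dim_\k\ker\!\left(\!\begin{bmatrix}M(G_i,S;\k)\\ M(G_i,T;\k)\end{bmatrix}\!\right).
\]
The kernel of the stacked matrix is exactly $\ker M(G_i,S;\k)\cap \ker M(G_i,T;\k)$, so the question reduces to computing these finite-dimensional joint kernels.

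Next I would invoke the second part of Theorem~\ref{thm-black-box-sigma}, which attaches to each $G_i$ a loop-tree graph $G^{(i)}$ of some type $(a_i,b_i)$ such that $\cal S(G_i,S;\k)=\al(G^{(i)};\k)$ and $\cal S(G_i,T;\k)=\be(G^{(i)};\k)$. Since the adjacency operator on $\cal S(G_i,S;\k)$ is $M(G_i,S;\k)$, and similarly for $T$, Lemma~\ref{lem:nice-graph-indep-of-k} applies directly and gives
\[
\dim_\k\bigl(\ker M(G_i,S;\k)\cap\ker M(G_i,T;\k)\bigr)\;=\;a_i-b_i,
\]
which crucially does not depend on $\k$. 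In particular this number equals the corresponding $\dim_\Q$ appearing in the sum of Theorem~\ref{thm-black-box-sigma}.

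Substituting back, each term of the sum over $i$ coincides termwise with the $\Q$-computation provided by Theorem~\ref{thm-black-box-sigma}, and the whole sum collapses to the prescribed value $\frac{1}{64}-\frac{2}{8^3}\sum_{i\in\Si}2^{-i}$. There is essentially no obstacle: the nontrivial work has been done in Theorem~\ref{thm:compu-tational-tool} (which handles the translation to finite graphs over $\k$), in the construction of the $(S,T)$-graphs in Theorem~\ref{thm-black-box-sigma} (via the results of \cite{grabowski-on-turing-dynamical-systems-and-the-atiyah-problem}), and in Lemma~\ref{lem:nice-graph-indep-of-k} (which supplies characteristic-independence). The only thing to double-check is the mild amenability issue flagged in the proof of Theorem~\ref{thm-black-box-sigma}, namely that the action in question factors through the amenable group $\Ga_2$; that is precisely what makes Theorem~\ref{thm:compu-tational-tool} applicable in the first place.
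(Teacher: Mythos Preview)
Your proposal is correct and follows essentially the same route as the paper: apply Theorem~\ref{thm:compu-tational-tool} to rewrite the left-hand side as a sum of joint kernel dimensions over the $(S,T)$-graphs, use Lemma~\ref{lem:nice-graph-indep-of-k} (via the loop-tree identification from Theorem~\ref{thm-black-box-sigma}) to replace $\k$ by $\Q$, and then read off the value from Theorem~\ref{thm-black-box-sigma}. Your additional remark about the amenability of $\Ga$ is a reasonable sanity check, but it is already absorbed into the statement and proof of Theorem~\ref{thm-black-box-sigma}, so nothing further is needed.
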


\begin{proof}
Theorem \ref{thm:compu-tational-tool} shows that the left hand side of \eqref{eq-bside} is equal to
$$
\sum_i \frac{\mu(\supp(G_i))}{|G_i|}\dim_\k ( \ker M(G_i,S;\k)\cap \ker M(G_i,T;\k) ). 
$$
This, by Lemma \ref{lem:nice-graph-indep-of-k}, is equal to 
$$
\sum_i \frac{\mu(\supp(G_i))}{|G_i|}\dim_\Q ( \ker M(G_i,S;\Q)\cap \ker M(G_i,T;\Q) ), 
$$
which by Theorem~\ref{thm-black-box-sigma} is equal to the right hand side of~\eqref{eq-bside}. This finishes the proof.
\end{proof}

The following theorem and Proposition~\ref{prop-eck} establish Theorem \ref{thm:intro-finitely-generated}.

\begin{theorem}
Let $\k$ be a field such that $\chr(\k)\neq 2$. For every non-negative real number $r$ there exists an amenable group $\La$ and a matrix  $T$ over $\Z_2[\La]$ such that 
$$
\dim_{\k[\La]} \ker T  = r.
$$ 
\end{theorem}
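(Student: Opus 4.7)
The plan is to derive the theorem from Corollary \ref{cory-main-proposition} by successively enlarging the set of realizable kernel gradients using the structural lemmas of Subsection \ref{subsec-basicprop}.

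First, I would observe that as $\Sigma$ ranges over subsets of $\N = \{0, 1, 2, \ldots\}$, the dyadic sum $\sum_{i\in\Sigma} 2^{-i}$ attains every value in $[0,2]$, since every real number in that interval admits a binary expansion. Substituting such an expansion into \eqref{eq-bside}, Corollary \ref{cory-main-proposition} then exhibits, for every $x \in [0,2]$, an amenable group $\Ga \ltimes_\Si A$ and a matrix over $\Z_2[\Ga\ltimes_\Si A]$ whose $\k$-kernel gradient equals $\tfrac{1}{64} - \tfrac{1}{256}\,x$. Consequently, every number in the closed interval $I_0 := [\tfrac{1}{128}, \tfrac{1}{64}]$ is realizable as $\dim_{\k[\La]}\ker T$ for some amenable $\La$ and some $T$ over $\Z_2[\La]$.

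Next, I would invoke Lemma \ref{lm-appendix-multiplication} to halve realizable values. Starting from a matrix over $\Z_2[\La]$ realizing some $r > 0$, the lemma, applied with $I$ taken to be an injective $\Z$-matrix of the appropriate shape (e.g.\ the identity in the $1\times 1$ case, or an appropriate two-row matrix with a single $1$ per column for the $2\times 1$ matrix produced by the corollary), yields a matrix over $\Z_2[\La \times \ZZ_2]$ realizing $r/2$. Amenability is clearly preserved, and the coefficients remain in $\Z_2$ because $\tfrac{1\pm u}{2} \in \Z_2[\ZZ_2]$. Iterating $k$ times realizes $r/2^k$ over $\La\times \ZZ_2^{k}$, so the union $\bigcup_{k\ge 0} 2^{-k}\,I_0 = (0, \tfrac{1}{64}]$ lies in the realizable set.

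Finally, I would close the realizable set under addition to reach arbitrarily large positive values. Given amenable groups $\La_1, \La_2$ with matrices realizing $r_1, r_2$, the direct product $\La_1 \times \La_2$ is amenable and each $\La_i$ embeds into it as a subgroup. By Lemma \ref{lm-appendix-subgroups}, both $r_1$ and $r_2$ are realizable over $\La_1 \times \La_2$, and then Lemma \ref{lm-appendix-addition} produces a matrix (of appropriate block form, still over $\Z_2[\La_1\times\La_2]$) realizing $r_1 + r_2$. By induction, every finite sum of elements of $(0, \tfrac{1}{64}]$ is realizable, and since every $r > 0$ is such a sum, the whole interval $(0, \infty)$ is covered. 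The value $r = 0$ is realized by the trivial group together with the $1 \times 1$ identity matrix. I do not anticipate any substantive obstacle beyond bookkeeping; the only thing requiring attention is verifying at each step that amenability of the ambient group and the dyadic nature of the coefficients are preserved, both of which are immediate from the cited lemmas.
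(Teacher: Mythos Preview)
Your argument is correct and follows essentially the same strategy as the paper's proof: realize an initial interval via Corollary~\ref{cory-main-proposition}, then extend to all positive reals using Lemmas~\ref{lm-appendix-addition} and~\ref{lm-appendix-multiplication}. The only difference is the order of operations---you halve first and then add, whereas the paper adds first (to cover $[N,\infty)$) and then halves---and you are a bit more explicit about using Lemma~\ref{lm-appendix-subgroups} to place matrices over a common group before invoking Lemma~\ref{lm-appendix-addition}, a point the paper leaves implicit.
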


\begin{proof}
The previous corollary establishes the theorem for $r$ in the interval  $[\frac1{64}- \frac2{8^3},\frac1{64}]$. By Lemma \ref{lm-appendix-addition}, we see that there exists a natural number $N$ such that the theorem is true for all $r\ge N$.

Thus for every positive real number there exists a natural number $k$ such that the theorem is true for  $2^k \cdot r$. Now Lemma \ref{lm-appendix-multiplication} shows that the theorem is true for $r$ as well. This finishes the proof.
\end{proof}

\subsection {Proof of Theorem \ref{thm:intro-lamplighter3}}\mbox{}

Let  $A := (\oplus_\Z \ZZ_2)^3\times \ZZ_2^3$, let $\Ga := \ZZ^3 \times \aut(\ZZ_2^3)$, and let $\Ga\ssactson A$ be the obvious action: $\aut(\ZZ_2^3)$ acts on $\ZZ_2^3$ and each copy of $\ZZ$ acts by shifting the coordinates of the corresponding copy of $\oplus_\Z\ZZ_2$. The following can be deduced from the proof of \cite[Theorem 4.3]{grabowski-on-turing-dynamical-systems-and-the-atiyah-problem} and the example in \cite[Section 5.3]{grabowski-on-turing-dynamical-systems-and-the-atiyah-problem}.

\begin{theorem}\label{thm-black-box-lamplighter3}
 There exist $S,T\in \Z_2 [\Ga\ltimes A]$ and a sequence $G_1,G_2,\ldots$ of exhausting simply-connected $(S,T)$-graphs such that  
$$
\sum_i \frac{\mu(\supp(G_i))}{|G_i|}\dim_\Q ( \ker M(G_i,S;\Q)\cap \ker M(G_i,T;\Q) ) =     \frac1{64}- \frac18\sum_{k=1}^\infty \frac1{2^{k^2+4k+6}}.
$$
Furthermore, for each $i$ there is a loop-tree graph $G$ of some type such that for all fields $\k$ we have $\cal S(G_i,S;\k) = \al(G;\k)$ and $\cal S(G_i,T;\k) = \be(G;\k)$.\qed
\end{theorem}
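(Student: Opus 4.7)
The plan is to import wholesale the construction from \cite[Section 5.3]{grabowski-on-turing-dynamical-systems-and-the-atiyah-problem}, where an analogous statement is proved for $\dim_{\vN}\ker$ in place of the kernel gradient. First, I would recall the explicit operators $S$ and $T$ built there: both are elements of $\Z[\Ga\ltimes A]$ of the form $\sum_i \ga_i f_i$, where the coefficients $f_i\ssin \Z[A]$ are linear combinations (with denominators that are powers of $2$) of indicator characters of the three $\ZZ_2$-coordinates carrying the ``lamplighter'' shifts and of elements of $\aut(\ZZ_2^3)$. Because the only denominators appearing in the construction come from normalizing projections of the form $\tfrac{1\pm u}{2}$ inside $\Z_2[\ZZ_2]$, both $S$ and $T$ in fact lie in $\Z_2[\Ga\ltimes A]$.

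Second, I would import the sequence $G_1, G_2,\ldots$ of simply-connected $(S,T)$-graphs produced in \cite{grabowski-on-turing-dynamical-systems-and-the-atiyah-problem}. The key point established there is that the associated directed graphs $\cal R(G_i, S)$ and $\cal R(G_i, T)$ can be read off from certain finite Turing-machine-type computations, and they all share the same underlying undirected skeleton which is a tree with self-loops at precisely the internal vertices, at some of the leaves, and possibly at the root. This is precisely the data of a loop-tree graph in the sense of Subsection~\ref{subsec-nice}. Furthermore, an inspection of the values $\langle f_i, p\rangle$ occurring in the definition of the adjacency operators shows that after applying $\rho$ the coefficients are always either $0$ or $1$, and the rules identify the non-loop edges and the external/internal/root loops with the roles prescribed in the definitions of $\al(G;\k)$ and $\be(G;\k)$. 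The identification $\cal S(G_i,S;\k)=\al(G;\k)$ and $\cal S(G_i,T;\k)=\be(G;\k)$ follows.

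Third, I would verify that the sequence is exhausting. This is a direct transcription of the measure computation in \cite[Section 5.3]{grabowski-on-turing-dynamical-systems-and-the-atiyah-problem}, where it is shown that the supports of the $G_i$ cover $X$ up to measure zero. The value of the infinite sum over $\Q$,
\[
\sum_i \frac{\mu(\supp(G_i))}{|G_i|}\dim_\Q\bigl(\ker M(G_i,S;\Q)\cap \ker M(G_i,T;\Q)\bigr),
\]
is then precisely the expression $\frac{1}{64}-\frac18\sum_{k=1}^\infty 2^{-(k^2+4k+6)}$, since by Proposition~\ref{prop-tool-free} it equals $\dimvn\ker\!\left[\begin{smallmatrix}S\\T\end{smallmatrix}\right]$, and the latter is computed in \cite[Section 5.3]{grabowski-on-turing-dynamical-systems-and-the-atiyah-problem} to take exactly this value.

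The main obstacle will be the bookkeeping in the second step: one must carefully match the types $(a,b)$ of the loop-tree graphs that arise from the Turing-machine encoding to the edge labels prescribed by the definitions of $\al$ and $\be$, in particular ensuring that external loops, internal loops, and the root loop are assigned the correct coefficient ($0$ or $1$) under both $\rho(S)$ and $\rho(T)$. Once this dictionary is in place, everything else is a direct transcription from the cited references.
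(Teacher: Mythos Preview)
Your proposal is correct and matches the paper's approach: the paper does not give a self-contained proof either, but simply records that the statement ``can be deduced from the proof of \cite[Theorem 4.3]{grabowski-on-turing-dynamical-systems-and-the-atiyah-problem} and the example in \cite[Section 5.3]{grabowski-on-turing-dynamical-systems-and-the-atiyah-problem}'' and marks it with a \qed. Your outline is precisely an expansion of what that deduction entails, and the bookkeeping you flag (matching loop types to the $\al/\be$ labelings) is indeed the only place where care is needed.
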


The following theorem and Proposition~\ref{prop-eck} establish Theorem \ref{thm:intro-lamplighter3}.

\begin{theorem}
Let $\k$ be a field such that $\chr(\k) \neq 2$. There is matrix $U$ over $\Z_2[(\ZZ_2\wr \ZZ)^3]$ such that 
\beq\label{eq-number-in-question}
\dim_{\k[(\ZZ_2\wr \ZZ)^3]}\ker U =  \frac1{64}- \frac18\sum_{k=1}^\infty \frac1{2^{k^2+4k+6}}.
\eeq
\end{theorem}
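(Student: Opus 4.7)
The plan is to apply the computational tool to the operators produced by Theorem~\ref{thm-black-box-lamplighter3} and then descend from $\Ga\ltimes A$ to the smaller group $(\ZZ_2\wr\ZZ)^3$ via Lemma~\ref{lm-appendix-subgroups}.

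First, Theorem~\ref{thm-black-box-lamplighter3} supplies $S,T \in \Z_2[\Ga\ltimes A]$ and an exhausting sequence $G_1, G_2, \ldots$ of simply-connected $(S,T)$-graphs such that, for each $i$, there exists a loop-tree graph $H_i$ with $\cal S(G_i,S;\k) = \al(H_i;\k)$ and $\cal S(G_i,T;\k) = \be(H_i;\k)$. By Lemma~\ref{lem:nice-graph-indep-of-k}, for every $\k$ with $\chr(\k)\neq 2$ the intersection $\ker M(G_i,S;\k) \cap \ker M(G_i,T;\k)$ has dimension independent of $\k$, hence equal to its $\Q$-value. Feeding this into Theorem~\ref{thm:compu-tational-tool} applied to the pair $(\rho(S),\rho(T))$ then gives
$$
  \dim_{\k[\Ga\ltimes A]} \ker \left[\begin{array}{c}\rho(S)\\ \rho(T)\end{array}\right] \,=\, \frac{1}{64} - \frac{1}{8}\sum_{k=1}^{\infty}\frac{1}{2^{k^2+4k+6}}.
$$

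Second, I will use the fact (visible from the explicit construction in Section~5.3 of \cite{grabowski-on-turing-dynamical-systems-and-the-atiyah-problem}) that the auxiliary factors $\aut(\ZZ_2^3) \subset \Ga$ and $\ZZ_2^3 \subset A$ only intervene in the description and analysis of the graphs $G_i \subset \Z_2[A]$, while the operators $S$ and $T$ themselves have all their group-element coefficients in the subgroup $\ZZ^3\ltimes (\oplus_\Z \ZZ_2)^3 = (\ZZ_2\wr\ZZ)^3 \subset \Ga\ltimes A$. Consequently, setting $U := \left[\begin{smallmatrix}S\\ T\end{smallmatrix}\right]$ produces a matrix over $\Z_2[(\ZZ_2\wr\ZZ)^3]$. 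Lemma~\ref{lm-appendix-subgroups} applied to the inclusion $(\ZZ_2\wr\ZZ)^3 \subset \Ga\ltimes A$ of amenable groups then yields $\dim_{\k[(\ZZ_2\wr\ZZ)^3]} \ker \rho(U) = \dim_{\k[\Ga\ltimes A]} \ker \rho(U)$, which is the required target value.

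The hardest step is the second one: it hinges on making fully explicit the construction of $S$ and $T$ from the reference and on confirming that none of the $\ga_i$ in the decomposition $T = \sum_i \ga_i f_i$ lies in the $\aut(\ZZ_2^3)$ factor (and similarly for $S$). Were that not so, the alternative would be to apply Lemma~\ref{lm-appendix-matrices} via the splitting $\Ga\ltimes A \cong (\ZZ_2\wr\ZZ)^3 \times F$ with $F := \aut(\ZZ_2^3)\ltimes \ZZ_2^3$ a finite group of order $1344$; but this would multiply the kernel gradient by $1344$, and since $1344$ is not a power of $2$, no combination of iterated applications of Lemma~\ref{lm-appendix-multiplication} could restore the exact target. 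The subgroup argument is thus essential, which is why the paper emphasizes in the introductory paragraph of this section that $\ZZ_2\wr\ZZ$ can be exhibited explicitly inside a finitely presented $2$-step solvable group.
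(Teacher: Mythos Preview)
Your first step — evaluating $\dim_{\k[\Ga\ltimes A]}\ker\left[\begin{smallmatrix}\rho(S)\\\rho(T)\end{smallmatrix}\right]$ via Theorem~\ref{thm:compu-tational-tool}, Lemma~\ref{lem:nice-graph-indep-of-k}, and Theorem~\ref{thm-black-box-lamplighter3} — matches the paper exactly.

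The passage to $(\ZZ_2\wr\ZZ)^3$ is where the two arguments diverge. The paper does not appeal to Lemma~\ref{lm-appendix-subgroups}; it uses the direct-product splitting $\Ga\ltimes A \cong (\ZZ_2\wr\ZZ)^3 \times F$ with $F$ finite and invokes Lemmas~\ref{lm-appendix-addition} and~\ref{lm-appendix-matrices} to reduce the problem to exhibiting the target number as a kernel gradient over $\k[\Ga\ltimes A]$. Your route, by contrast, hinges on the assertion that $S$ and $T$ already lie in $\Z_2[(\ZZ_2\wr\ZZ)^3]$. You yourself flag this as unverified, and there is good reason to doubt it: Theorem~\ref{thm-black-box-lamplighter3} only delivers $S,T\in\Z_2[\Ga\ltimes A]$, and in the closely parallel construction of Section~\ref{subsec-coaster} the operator genuinely uses the $\aut(\ZZ_2^3)$-factor — this is exactly why the factor $1344=|F|$ surfaces in the final statement, as the Remark after Theorem~\ref{thm-rem} makes explicit. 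So as it stands your second step is a gap rather than a proof: you are assuming away precisely the obstruction your final paragraph identifies. Your closing sentence about embedding $\ZZ_2\wr\ZZ$ into a finitely presented $2$-step solvable group concerns the subsequent deduction of Theorem~\ref{thm:intro-lamplighter3} via Proposition~\ref{prop-eck}, not the descent step at issue here.
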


\begin{proof}
Note that $\Ga \ltimes A$ is isomorphic to the product of $(\ZZ_2\wr \ZZ)^3$ and the finite
group $\ZZ_2^3\wr \aut(\ZZ_2^3)$. Therefore, by  Lemmas \ref{lm-appendix-addition} and \ref{lm-appendix-matrices}, it suffices to show that the right hand side of  \eqref{eq-number-in-question} is equal to the kernel gradient of a matrix over $\k[\Ga \ltimes A]$.

Let $S$ and $T$ be from the previous theorem. We claim that 
\beq\label{eq-aside}
    \dim_{\k[\Ga \ltimes A]} \ker \left( \begin{array}{c} \rho(S) \\ \rho(T) \end{array}\right) =  \frac1{64}- \frac18\sum_{k=1}^\infty \frac1{2^{k^2+4k+6}}.
\eeq

Theorem \ref{thm:compu-tational-tool} show that the left-hand side of~\eqref{eq-aside} is equal to
$$
\sum_i \frac{\mu(\supp(G_i))}{|G_i|}\dim_\k ( \ker M(G_i,S;\k) \cap \ker M(G_i,T;\k) ). 
$$
This, by Lemma \ref{lem:nice-graph-indep-of-k}, is equal to 
$$
\sum_i \frac{\mu(\supp(G_i))}{|G_i|}\dim_\Q ( \ker M(G_i,S;\Q) \cap \ker M(G_i,T;\Q) ). 
$$
which by the previous theorem is equal to the right-hand side of~\eqref{eq-aside}. This finishes the proof.
\end{proof}

\subsection{Proof of Theorem \ref{thm-intro-positive}}\label{subsec-coaster}\mbox{}

\newcommand{\ov}{\overline}

Let $\k$ be a field. For $k,l\ssin \N_+$ we introduce three graphs, $G_1(k;\k)$,  $G_2(l;\k)$ and $G_3(k,l;\k)$, with edges labeled by $\k$.

Let $G_1(k;\k)$ be the graph with $2k$ vertices which arises from the graph in Figure \ref{fig_sgraph_g} by adding a self-loop with label $1$ at each vertex.

Let $G_2(l;\k)$ be the graph with $2l+1$ vertices which arises from the graph in Figure \ref{fig_sgraph_h} by adding a self-loop with label $1$ at each vertex except for the unique vertex with no outgoing edges. 

Finally, let $G_3(k,l;\k)$ be the graph with $2k+2l+2$ vertices which arises from the graph  in Figure \ref{fig_sgraph_j} by adding a self-loop with label $1$ at each vertex, except for the two
unique vertices with either no outgoing edges or no incoming edges.

\begin{figure}[h]%
  \resizebox{0.79\textwidth}{!}{\input{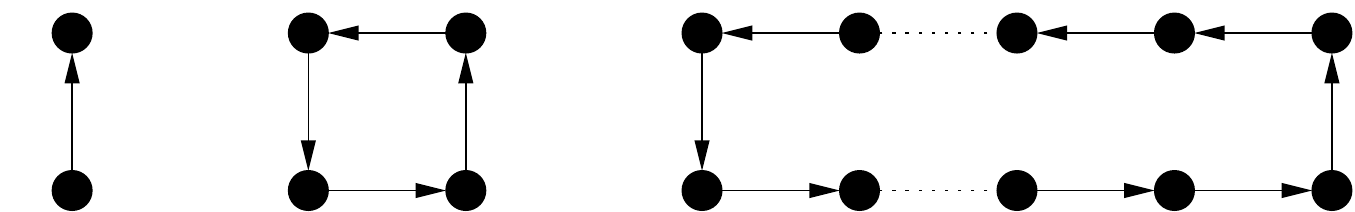_t}}
  \caption{}
  \label{fig_sgraph_g}
\end{figure}

\begin{figure}[h]%
  \resizebox{0.70\textwidth}{!}{\input{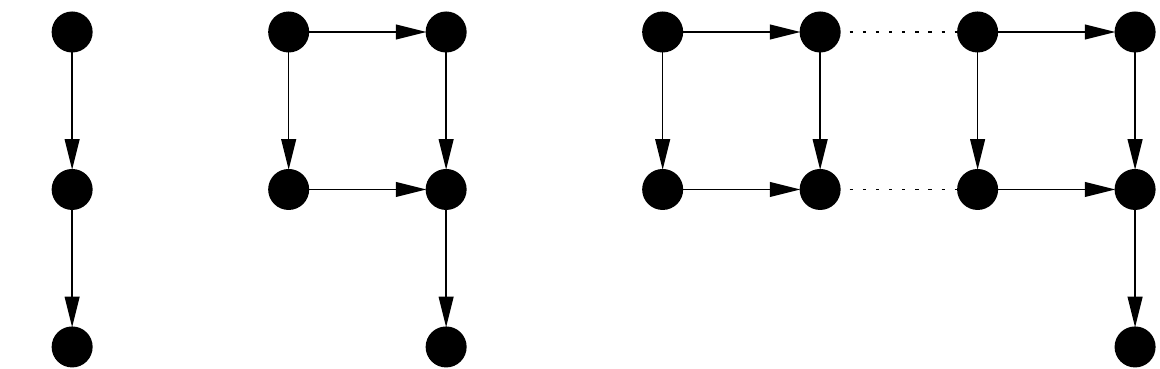_t}}
  \caption{}
  \label{fig_sgraph_h}
\end{figure}

\begin{figure}[h]%
  \resizebox{0.89\textwidth}{!}{\input{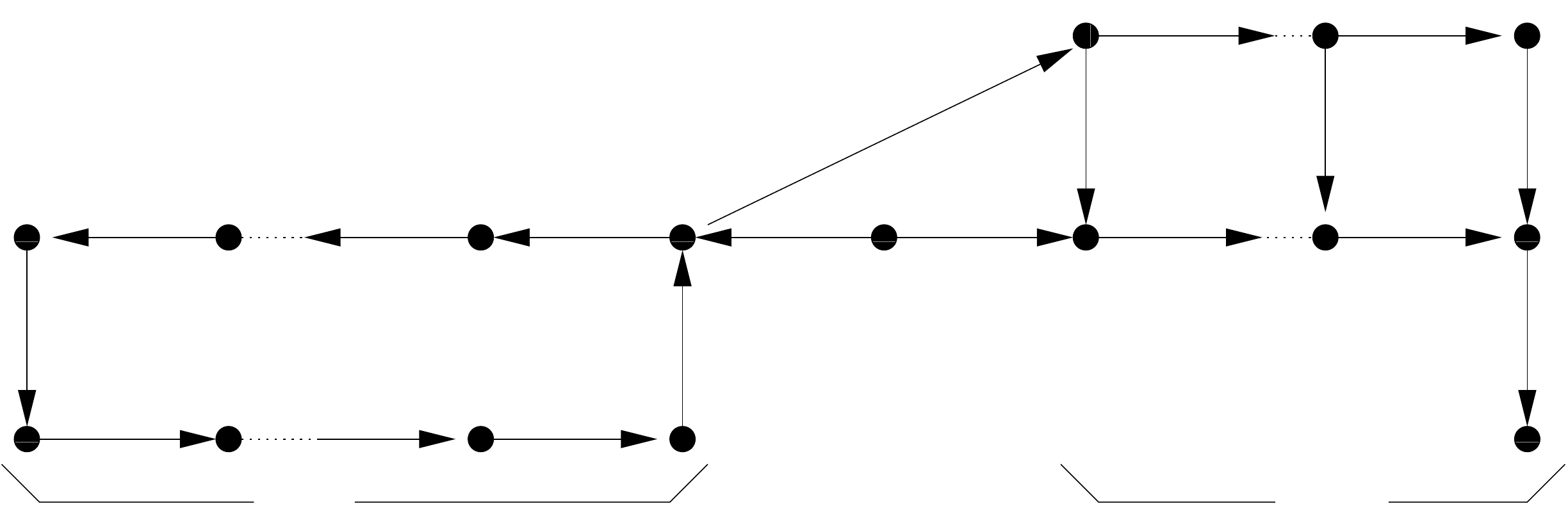_t}}
  \caption{}
  \label{fig_sgraph_j}
\end{figure}

Let $M_1(k;\k)$, $M_2(l;\k)$ and $M_3(k,l;\k)$ be the adjacency operators on, respectively, the graphs $G_1(k;\k)$,  $G_2(l;\k)$ and $G_3(k,l;\k)$. The proof of the following lemma is deferred to Section \ref{sec-lin-algebra}.

\begin{lemma}\label{lem-dims}We have
\begin{alignat*}{2}
 &\dim_\k \ker M_1(k;\k) &&= \,\, \left\{ 
	\begin{array}{l l}
  		1 &   \mbox{if $k>1$ and $1 = 2^{k-1}$ in $\k$,}\\
		0 &   \mbox{otherwise,}\\ 
	\end{array} \right. 
\\[5pt]
&\dim_\k \ker M_2(l;\k)  &&= \,\, 1,
\\[5pt]
 &\dim_\k \ker M_3(k,l;\k) &&=  \,\,\left\{ 
	\begin{array}{l l}
  		2 &   \mbox{if $l= 2^{k-1}-1$ modulo $\chr(\k)$,}\\
		1 &   \mbox{otherwise.}\\ 
	\end{array} \right. 
\end{alignat*}\qed
\end{lemma}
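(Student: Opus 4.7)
The plan is to verify each of the three equalities by translating the condition ``$v \in \ker M_i$'' into an explicit linear system for the coordinates of $v$ indexed by the vertices of the relevant graph, and then reducing this system to a one-step linear recurrence whose space of solutions depends on whether a specific polynomial identity holds in $\k$.

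First I would handle $M_2(l;\k)$, which is the easiest case. The graph $G_2(l;\k)$ is (essentially) a chain with a self-loop of label $1$ attached to every vertex \emph{except} the unique sink. Writing the kernel equation at each vertex, starting from the sink and propagating inward, yields a two-term relation that determines every coordinate of $v$ from the value at the sink; the recurrence imposes no obstruction, so $\dim_\k \ker M_2(l;\k) \sseq 1$ unconditionally.

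Next I would treat $M_1(k;\k)$. Here the graph is a ``ladder'' of two parallel rows of $k$ vertices, joined by rung edges labeled $-1$ or $-2$, with a self-loop of label $1$ at every vertex. Writing the kernel equation at each rung gives a $2\sstimes 2$ linear relation between consecutive rungs; solving it introduces a multiplicative factor of $2$ per step, so that after $k{-}1$ steps the first and last rungs are related by the factor $2^{k-1}$. Matching the boundary conditions at the two ends forces a non-zero common solution precisely when $2^{k-1} \sseq 1$ in $\k$, and in that case the kernel is one-dimensional. The case $k\sseq 1$ is handled separately by inspection, since $M_1(1;\k)$ is a $2\sstimes 2$ matrix which a direct computation shows to be invertible over any $\k$ with $\chr(\k)\ssneq 2$.

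Finally, $M_3(k,l;\k)$ acts on a graph obtained by gluing a $G_1$-type ladder of length $k$ and a $G_2$-type chain of length $l$ along two extremal vertices that carry \emph{no} self-loops. A kernel vector decomposes into its restriction to the $G_2$-side and its restriction to the $G_1$-side. The $G_2$-side always contributes a one-dimensional family of solutions by the argument used for $M_2$. A second, linearly independent kernel vector exists precisely when the unique (up to scalar) recurrence solution on the $G_1$-side can be propagated through the interface; unwinding this compatibility equation, the $2^{k-1}$-factor from the $G_1$ recurrence combined with the length-$l$ propagation on the $G_2$-side yields exactly the congruence $l\sseq 2^{k-1}{-}1 \pmod{\chr(\k)}$, giving the stated dichotomy.

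The main obstacle will be careful bookkeeping of signs, of the factor $2$ coming from the edge labels $-2$, and of the boundary behavior at vertices with versus without self-loops. In particular the ``$-1$'' in $2^{k-1}-1$ arises from the two self-loopless interface vertices in $G_3$, which shift the recurrence by one step relative to the calculation for $M_1$; verifying that this shift produces precisely the exponent $k{-}1$ and the additive constant $-1$ is the delicate part of the computation.
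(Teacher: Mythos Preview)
Your approach is essentially the same as the paper's: both arguments write the kernel condition as the system
$\sum_{w,a\colon (w,v,a)\in E} a\cdot f(w)=0$ at each vertex $v$ and propagate these equations as a first-order recurrence along the graph, picking up a factor of $2$ at each rung of the ladder part and a shift of $1$ at each step of the chain part. The paper only spells out the $M_3$ case, starting the propagation from the normalisation $f(A_1)=1$, $f(F)=0$ and arriving at $f(D_l)=2^{k-1}-l-1$, exactly your compatibility condition.

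One point of phrasing is worth sharpening. For $M_3$ you say the kernel vector ``decomposes into its restriction to the $G_2$-side and its restriction to the $G_1$-side'' and that the $G_2$-side ``always contributes a one-dimensional family by the argument used for $M_2$''. This is not literally a direct-sum decomposition, and the paper's identification of the guaranteed kernel element is more direct: the sink vertex $F$ has no outgoing edges and no self-loop, so the indicator function $\chi_F$ satisfies $M_3(\chi_F)=0$ automatically. The same observation gives the generator of $\ker M_2$ (so there the recurrence actually forces all coordinates other than the sink to vanish, rather than producing a non-trivially supported vector). With this adjustment your outline matches the paper's proof.
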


Let  $A := (\oplus_\Z \ZZ_2)\times \ZZ_2^3$, let $\Ga := \ZZ \times \aut(\ZZ_2^3)$, and let $\Ga\ssactson A$ be the obvious action. The following can be deduced from \cite[Section 5]{arXiv:grabowski-2010-2}.

\begin{theorem}\label{thm-black-box-coaster}
 There is $T\in \Z_2 [\Ga\ltimes A]$ and $T$-graphs $U$, $G_1(k)$ for $k\ssin \N_+$, $G_2(l)$ for $l\ssin \N_+$, and $G_3(k,l)$ for $k,l\ssin \N_+$, with the following properties.

\begin{enumerate}
\item  There exists an exhausting sequence of simply-connected $T$-graphs which enumerates the set $\{U\}\cup \{G_1(k)\colon k\ssin \N_+\}\cup \{G_2(l)\colon l\ssin \N_+\} \cup \{G_3(k,l)\colon k,l\ssin \N_+\}$.
\item For every field $\k$ we have that $\cal S(U,T;\k)$ is a graph with one vertex and no edges, and  there exist  isomorphisms of labeled graphs
 $$
 \cal S(G_1(k),T;\k) \cong  G_1(k;\k), \quad  \cal S(G_2(l),T;\k) \cong  G_2(l;\k),\quad  \cal S(G_3(k,l),T;\k) \cong  G_3(k,l;\k).  
 $$
 
  \item We have 
\begin{align*}
   \quad \mu (\supp(U)) =\frac{45}{64} 
   &\quad\quad\mu(\supp(G_1(k))) = \frac{2k}{64\cdot 2^k}\\
\quad\mu(\supp(G_2(l))) = \frac{2l+1}{64\cdot 2^l}
&\quad\quad\mu(\supp (G_3(k,l))) = \frac{2k+2l+2}{2^{k+l}}.
\end{align*}
\end{enumerate}
\qed
\end{theorem}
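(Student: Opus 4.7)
The plan is to import the construction from \cite[Section 5]{arXiv:grabowski-2010-2}, where the element $T$ whose kernel gradient realises the $\Q$-homology gradient of the desired complex is analysed in detail, and then translate its orbit picture into the $T$-graph formalism of Section~\ref{sec-definition}. First I would recall from \cite{arXiv:grabowski-2010-2} the explicit $T\in \Z_2[\Ga\ltimes A]$: it is built as a sum $\sum \ga_i f_i$, where each $\ga_i$ is a product of a power of the generator of $\Z$ and an element of $\aut(\Z_2^3)$, and each $f_i$ is a projection in $\Z_2[A]$ whose support is a cylinder set depending on finitely many coordinates of $\oplus_\Z\Z_2$ together with a character of $\Z_2^3$. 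The action $\Ga\actson A$ is the obvious one; the dual action $\Ga\actson X$ is then a measure-preserving action on a Cantor space.

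Second, I would extract from \cite[Section 5]{arXiv:grabowski-2010-2} the decomposition of $X$, up to a set of measure zero, into a $\Ga$-invariant union of pieces of four kinds: a ``generic'' piece of measure $45/64$ whose $T$-orbit structure is a single fixed point; a family indexed by $k\ssin\N_+$ realising graphs of the type in Figure~\ref{fig_sgraph_g}; a family indexed by $l\ssin\N_+$ realising those in Figure~\ref{fig_sgraph_h}; and a family indexed by $(k,l)$ realising those in Figure~\ref{fig_sgraph_j}. For each such piece I would pick a single base projection $p\ssin\Z_2[A]$ supported in it; the $T$-graph $U$, $G_1(k)$, $G_2(l)$ or $G_3(k,l)$ is then defined as the $\Ga$-orbit of $p$ under the action given by the summands of $T$, restricted until it closes up. Simple connectivity holds because on each orbit the stabiliser coming from the relations $\ga_i.p = \ga_j.p$ is generated by loops of length matching the visible self-loops in the figures; the fact that the orbits are finite and that they cover $X$ up to measure zero gives the exhausting property.

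Third, I would verify that $\cal S(G_i,T;\k)$ agrees with $G_1(k;\k)$, $G_2(l;\k)$, $G_3(k,l;\k)$ for every field $\k$. The edge set of $\cal S(G_i,T;\k)$ is determined by the combinatorics of the orbit together with the scalar labels $M(p,q;\k)=\sum_{i:\ga_i.p=q}\rho(\langle f_i,p\rangle)$. Since the constants $\langle f_i,p\rangle$ are integers read off in \cite[Section~5]{arXiv:grabowski-2010-2} (they produce exactly the labels $\pm 1, \pm 2$ shown in Figures~\ref{fig_sgraph_g}--\ref{fig_sgraph_j} together with the self-loops labelled $1$), the reduction map $\rho\colon \Z_2\to\k$ commutes with edge labelling and the identification is independent of $\chr(\k)$. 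Finally, the measures $\mu(\supp(\,\cdot\,))$ of each piece are cylinder-set computations done in \cite{arXiv:grabowski-2010-2} and can simply be quoted.

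The main obstacle will not be any deep mathematics but the bookkeeping of translating \cite[Section~5]{arXiv:grabowski-2010-2}, which is phrased in the pre-existing spectral-computation formalism, into the precise language of simply-connected $T$-graphs introduced in Section~\ref{sec-definition}. In particular, care is needed to match the orientation and labelling conventions of the figures with the definition of $\cal S(G,T;\k)$, and to check that the base-projections chosen in each orbit satisfy conditions (1)--(3) of the definition of $T$-graph with respect to the \emph{same} fixed decomposition $T=\sum\ga_if_i$ used to define the edge weights.
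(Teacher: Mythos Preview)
Your proposal is correct and matches the paper's approach exactly: the paper treats this theorem as a black box, stating only that it ``can be deduced from \cite[Section 5]{arXiv:grabowski-2010-2}'' and appending a \qed with no further argument. Your sketch of how to carry out that deduction --- importing the explicit $T$, reading off the orbit decomposition and cylinder-set measures, and checking that the integer edge labels survive reduction modulo $\chr(\k)$ --- is precisely the translation the paper is tacitly invoking, so you are in fact supplying more detail than the paper does.

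One small point worth tightening in your write-up: your explanation of simple connectivity (``the stabiliser\ldots is generated by loops of length matching the visible self-loops'') is slightly off. In the $T$-graph formalism of Section~\ref{sec-definition}, simple connectivity requires that every loop in $\cal R(G,T)$ has label equal to the neutral element of $\Ga$. This forces the self-loops in Figures~\ref{fig_sgraph_g}--\ref{fig_sgraph_j} to come from summands $\ga_i f_i$ with $\ga_i=e$, and the remaining (non-loop) edge structure must be a tree in the $\Ga$-labelled sense. Both of these hold for the construction in \cite{arXiv:grabowski-2010-2}, but you should phrase it that way rather than in terms of stabilisers.
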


Let $q:=\chr(\k)>2$, let $r$ be the multiplicative order of $2$ in $\k^\ast$, let $L\subset \{2,3,\ldots \}$ be the finite set of minimal
representatives in $\{2,3,\ldots\}$ of the powers $2^i$ mod $q$, $i\ge 0$, and for $x\in L$ let
$r(x)\ge 0$ be the smallest natural number such that $2^{r(x)} =x \mod q$.

\begin{theorem}\label{thm-rem}
Let $\k$ be a field such that $\chr(\k)\neq 2$ and let $T$ be as in the previous theorem. We have 
$$
 \dim_{\k[\Ga\ltimes A]}\ker \rho(T) = \frac{47}{64} + \frac{1}{128}\sscdot \frac{1}{2^r-1} + \frac{1}{64}\sscdot \frac{2^q}{2^q-1} \sscdot \frac{2^r}{2^r-1}\sscdot\sum_{x\in L} \frac{1}{2^{x+r(x)}}.
$$
\end{theorem}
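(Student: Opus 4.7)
The plan is to apply Theorem~\ref{thm:compu-tational-tool} to the single operator $\rho(T)$ using the exhausting sequence of simply-connected $T$-graphs supplied by Theorem~\ref{thm-black-box-coaster}. This rewrites $\dim_{\k[\Ga\ltimes A]}\ker\rho(T)$ as the sum of four pieces: one from the graph $U$, and three infinite sums indexed by $k$, $l$, and $(k,l)$ coming from $G_1(k)$, $G_2(l)$, and $G_3(k,l)$. Each piece is then evaluated using the kernel dimensions from Lemma~\ref{lem-dims}, the vertex counts $|U|=1$, $|G_1(k)|=2k$, $|G_2(l)|=2l+1$, $|G_3(k,l)|=2k+2l+2$, together with the measures listed in Theorem~\ref{thm-black-box-coaster}.

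Three of the four pieces are immediate. The graph $U$ has a single vertex and no edges, so contributes $\mu(\supp(U))=45/64$. For $G_1(k)$ the combined factor $\mu(\supp(G_1(k)))/|G_1(k)|$ equals $1/(64\cdot 2^k)$, and Lemma~\ref{lem-dims} gives a nontrivial kernel only when $k>1$ and $2^{k-1}\equiv 1\pmod{q}$, i.e.\ when $k\in 1+r\N_+$; summing $\sum_{j\ge 1}\frac{1}{64\cdot 2^{1+jr}}$ yields $\frac{1}{128(2^r-1)}$. For $G_2(l)$ every term has one-dimensional kernel, and after $|G_2(l)|$ cancels with the matching factor in the measure we obtain $\sum_{l\ge 1}\frac{1}{64\cdot 2^l}=\frac{1}{64}$.

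The main bookkeeping is in the $G_3(k,l)$ contribution. Using Lemma~\ref{lem-dims} I split it as a baseline $\sum_{k,l\ge 1}\frac{1}{64\cdot 2^{k+l}}=\frac{1}{64}$ (every pair $(k,l)$ contributes at least kernel dimension one) plus an extra part from those pairs satisfying $l\equiv 2^{k-1}-1\pmod{q}$. To sum the extras I will group according to the residue $2^{k-1}\bmod q$, parametrized by $x\in L(\k)$. The unusual representative set $\{2,\ldots,q+1\}$ in the definition of $L(\k)$ is precisely what makes this accounting uniform: for $x\in\{2,\ldots,q-1\}$ the smallest $l\ge 1$ with $l\equiv x-1\pmod{q}$ is $l_0=x-1$, while for $x=q+1$ (which represents the residue $1$, arising when $2^{k-1}\equiv 1$), the congruence $l\equiv 0\pmod{q}$ has smallest positive solution $l_0=q=x-1$ as well. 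The corresponding congruence $k-1\equiv r_\k(x)\pmod{r}$ has smallest positive solution $k_0=r_\k(x)+1$. Summing the two independent geometric progressions produces exactly $\frac{1}{64}\cdot\frac{2^q}{2^q-1}\cdot\frac{2^r}{2^r-1}\sum_{x\in L(\k)}2^{-(x+r_\k(x))}$.

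Assembling the four pieces then yields the constant $\frac{45}{64}+\frac{1}{64}+\frac{1}{64}=\frac{47}{64}$, the $G_1$ contribution $\frac{1}{128(2^r-1)}$, and the $G_3$ extras matching the last summand of the claimed formula. The only delicate step is recognising that the representative set $\{2,\ldots,q+1\}$ is exactly what turns the exceptional case $2^{k-1}\equiv 1$ into the formally identical expression $l_0=x-1$; once this is observed the remainder is routine geometric summation.
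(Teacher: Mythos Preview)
Your proof is correct and follows essentially the same route as the paper's own argument: apply Theorem~\ref{thm:compu-tational-tool} to the exhausting sequence from Theorem~\ref{thm-black-box-coaster}, evaluate the four resulting pieces using Lemma~\ref{lem-dims} and the listed measures, and sum the geometric series. Your additional remark explaining why the representative set $\{2,\ldots,q{+}1\}$ makes the case $2^{k-1}\equiv 1$ behave uniformly (so that $l_0=x-1$ in all cases) is a helpful clarification that the paper leaves implicit, but the overall approach and all computations coincide.
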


\begin{remark}The additional factor $1344$ in the statement of Theorem \ref{thm-intro-positive} in the introduction arises from the fact that $\Ga\ltimes A$ is isomorphic to $\ZZ_2 \wr \ZZ \times (\ZZ_2^3\rtimes \aut(\ZZ_2^3))$, and the order of $\ZZ_2^3\rtimes \aut(\ZZ_2^3)$ is $1344$ (see Lemma~\ref{lm-appendix-matrices}).  

On the other hand, in \cite{ Grigorchuk_Linnel_Schick_Zuk} the group $\ZZ_2\wr \ZZ$ is explicitly embedded into a finitely presented $2$-step solvable group $\La$, and by Lemma~\ref{lm-appendix-subgroups} we have $\dim_{\k[\Ga\ltimes A]} \ker \rho(T) = \dim_{\k[\La]} \ker \rho(T)$. As such Theorem~\ref{thm-rem} and Proposition~\ref{prop-eck} establish Theorem~\ref{thm-intro-positive}.
\end{remark}

\begin{proof}[Proof of Theorem~\ref{thm-rem}]
By Theorem \ref{thm:compu-tational-tool}, $\dim_{\k[\Ga\ltimes A]}\ker \rho(T)$ is equal to the sum of the terms
\begin{align*}
&\sum_{k\ge 1} \frac{\mu(\supp (G_1(k)))}{2k} \cdot \dim_\k\ker M(G_1(k),T;\k),   \\
&\sum_{l\ge 1} \frac{\mu(\supp (G_2(l)))}{2l+1} \cdot \dim_\k\ker M(G_2(l),T;\k), \\
&\sum_{k,l\ge 1} \frac{\mu(\supp (G_3(k,l)))}{2k+2l+2} \cdot\dim_\k\ker M(G_3(k,l),T;\k),
\end{align*}
and the number $\frac{45}{64}$, corresponding to the $T$-graph $U$ in Theorem \ref{thm-black-box-coaster}. By Lemma \ref{lem-dims} the sums above are equal to, respectively,
\begin{gather*}
\sum_{\mathclap{\substack{k>1, \\ 2^{k-1}=1(\k)}}} \, \frac{1}{64}\sscdot\frac{1}{2^k} = 
    \frac{1}{64}\sum_{n=1}^\infty \frac{1}{2^{rn+1}} 
    = \frac{1}{128}\left(\frac{1}{1-2^{-r}} -1\right) = \frac{1}{128}\sscdot \frac{1}{2^r-1}, \\
\sum_{l\ge 1} \, \frac{1}{64} \sscdot \frac{1}{2^l} = \frac{1}{64},
\end{gather*}
and 
\begin{align*}
    \frac{1}{64}\sum_{k,l\ge 1}   \frac{1}{2^{k+l}} + \frac{1}{64} \sum_{\mathclap{\substack{k,l\ge 1,\\l=2^{k-1}-1\,(\k)}}} \frac{1}{2^{l+k}} \,&=\, 
    \frac{1}{64} +\frac{1}{64} \sum_{x\in L} \sum_{\substack{n,m\ge 0}}
    \frac{1}{2^{(x+qn-1)+ (r(x)+r m+1)}} =\\
   & = \frac{1}{64} +
    \frac{1}{64}\sscdot \frac{2^q}{2^q-1}\sscdot \frac{2^r}{2^r-1} \sscdot \sum_{x\in L}
    \frac{1}{2^{x+r(x)}},
\end{align*}
and so the theorem follows.
\end{proof}

\section{Linear algebra computations}\label{sec-lin-algebra}

The notation in the following lemma is as in Subsection~\ref{subsec-nice}.

\begin{lemma}
Let $G$ be a loop-tree graph of type $(a,b)$. Then
$$
    \dim_\k (\ker M(\al;\k) \cap \ker M(\be;\k))  = a - b.
$$
In particular the left-hand side is independent of $\k$.
\end{lemma}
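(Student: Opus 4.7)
The plan is to prove this lemma by a direct vertex-by-vertex analysis of the two kernel equations, exploiting the tree structure of $G$. Every entry in the matrices of both $M(\al;\k)$ and $M(\be;\k)$ is an integer (indeed, in $\{0,\pm 1\}$), so these operators have integer representatives. I would exhibit an explicit $\Z$-lattice basis of $\ker M(\al;\Z)\cap \ker M(\be;\Z)$ of size $a-b$ whose image in any $\k$ with $\chr(\k)\neq 2$ remains a basis of the corresponding intersection; this immediately yields the claimed independence from $\k$.

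First I would unfold the $M(\al;\k)$-equations. At each leaf $\ell$ carrying a self-loop in $G$, the equation $(M(\al;\k)x)_\ell=x_\ell$ forces $x_\ell=0$; at each of the $a$ free leaves there is no constraint; at each internal vertex $w$ (which, by definition of a loop-tree graph, carries a self-loop), the equation reads $x_w+\sum_{c\text{ child of }w}x_c=0$ and determines $x_w$ in terms of its children. Propagating these determinations upward, $\ker M(\al;\k)$ is parametrized by the $a$ free-leaf values together with one further datum at the root: a determination $x_r=-\sum_{c\text{ child of }r}x_c$ when $b=1$, or a single constraint $\sum_{c\text{ child of }r} x_c=0$ together with the free parameter $x_r$ when $b=0$.

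Next I would analyze $\ker M(\be;\k)$. For $b=0$, $M(\be;\k)$ is the zero operator, so the intersection coincides with $\ker M(\al;\k)$ and a direct count gives dimension $a$. For $b=1$, the $\be$- and $\al$-equations agree at every non-root vertex, while at the root they combine into a $2\times 2$ linear system in the unknowns $x_r$ and $\sum_{c\text{ child of }r}x_c$ whose determinant is $\pm 2$; the hypothesis $\chr(\k)\neq 2$ is used precisely to invert this block, forcing both unknowns to vanish and cutting the kernel of $M(\al;\k)$ by exactly one further dimension. In both cases one arrives at dimension $a-b$.

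The main obstacle will be the case analysis at the root when $b=1$: the precise form of $\be(G;\k)$ depends on whether $a=0$ or $a\geq 1$, and in each sub-case one must verify that the $\be$-equation at the root is genuinely independent of the $\al$-equation once the propagated leaf values have been substituted in. Since all coefficients appearing in this check lie in $\{\pm 1,\pm 2\}$ and $\pm 2$ are units in every field with $\chr(\k)\neq 2$, the final dimension $a-b$ is genuinely independent of $\k$.
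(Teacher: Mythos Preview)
Your overall strategy coincides with the paper's: compute $\ker M(\alpha;\k)$ by propagating the equations from the leaves towards the root, then intersect with the $\beta$-constraint. The paper packages the first step by exhibiting the explicit basis $\xi(\ell) = \ell - n(\ell) + n^2(\ell) - \cdots \pm r$ (one for each free leaf $\ell$) of $\ker M(\alpha;\k)$, giving dimension $a$, and then observes that for $b=1$ the hyperplane $\ker M(\beta;\k)$ does not contain this span, so the intersection drops to $a-1$.

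However, you have misread the definition of $\beta(G;\k)$ in the case $b=1$. In the paper's definition (note the evident typo: ``$a=0$'' and ``$a=1$'' should read ``$s=0$'' and ``$s=1$''), every edge other than the root self-loop carries label $0$, and only the root loop carries label $1$; thus $M(\beta;\k)$ is the rank-one map $x \mapsto x_r \cdot r$. The $\beta$-equations at non-root vertices are therefore vacuous (each is $0=0$), not copies of the $\alpha$-equations, and the sole $\beta$-constraint is $x_r = 0$. Combined with the $\alpha$-equation $x_r + \sum_c x_c = 0$ at the root, the resulting $2\times 2$ system in $(x_r,\sum_c x_c)$ has determinant $-1$, not $\pm 2$. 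Consequently the hypothesis $\chr(\k)\neq 2$ plays no role in this lemma: the statement holds over every field, which is precisely the content of the final clause ``the left-hand side is independent of $\k$.''
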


\begin{proof}
Let $r\ssin V(G)$ be the root. For $v\in V(G)$ let $n(v) :=v$ if $v =r$, and otherwise let $n(v)$ be the unique vertex different from $v$ such that there is a directed edge from $v$ to $n(v)$. 

For $v\ssin V(G)$ let us define $\xi(v)\ssin \k[V(G)]$ to be equal to $v-n(v) +n^2(v)-\ldots \pm r$. A direct check shows that 
$$
\ker M(\al;\k)  = \lin_\k (\xi(v)\colon\, v\text{ is a leaf vertex without external loop}),
$$
and so we deduce that $\dim \ker M(\al;\k)  =a$. On the other hand clearly if $b=0$ then $M(\be;\k) $ is the zero operator and so the claim holds. If $b=1$ then $\ker M(\be;\k) $ is a codimension $1$ subspace which is not contained in $\ker M(\al;\k) $, and so in this case the claim also holds.
\end{proof}

The notation in the following lemma is as in Subsection~\ref{subsec-coaster}.

\begin{lemma}
We have
\begin{alignat*}{2}
 &\dim_\k \ker M_1(k,\k) &&= \,\, \left\{ 
	\begin{array}{l l}
  		1 &   \mbox{if $k>1$ and $1 = 2^{k-1}$ in $\k$,}\\
		0 &   \mbox{otherwise,}\\ 
	\end{array} \right. 
\\[5pt]
&\dim_\k \ker M_2(l,\k)  &&= \,\, 1,
\\[5pt]
 &\dim_\k \ker M_3(k,l;\k) &&=  \,\,\left\{ 
	\begin{array}{l l}
  		2 &   \mbox{if $l= 2^{k-1}-1$ modulo $\chr(\k)$,}\\
		1 &   \mbox{otherwise.}\\ 
	\end{array} \right. 
\end{alignat*}
\end{lemma}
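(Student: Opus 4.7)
The plan is to solve each linear system $Mv=0$ directly by propagating the kernel equations along the underlying graph. In all three cases, after removing the self-loops, the graph is essentially a tree with two branches, so one can order the vertices to make the matrix almost triangular and read off the kernel inductively.

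I would start with $M_2(l;\k)$ as a warm-up. The adjacency operator has the form $I+N_2 - E_{v_0,v_0}$, where $v_0$ is the unique vertex without a self-loop. Starting from $v_0$ and marching along the backbone of Figure \ref{fig_sgraph_h}, the $-1$ labels force each next coordinate to be determined linearly by the previous one, with no consistency obstruction at any step. So the kernel is exactly one-dimensional, independently of $\chr(\k)$.

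For $M_1(k;\k)$, the graph of Figure \ref{fig_sgraph_g} has two interacting strands and the edge labels $-1$ and $-2$ combine; pulling the kernel equation from one end towards the other produces a recursion whose coefficients double at each stage. The closure condition at the terminal vertex takes the shape $(2^{k-1}-1)\,c=0$ for a distinguished free coordinate $c$. When $k=1$ the structure is degenerate and forces $c=0$ directly. When $k>1$, a nonzero $c$ is allowed iff $2^{k-1}=1$ in $\k$, and in that case the solution is unique up to scalar.

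Finally, $M_3(k,l;\k)$ combines the two previous analyses. The graph in Figure \ref{fig_sgraph_j} consists of a $G_1$-like left piece of length $k$ and a $G_2$-like right piece of length $l$, joined through two central vertices (the source and the sink, neither of which carries a self-loop). From the $G_2$-analysis the right half always contributes a one-dimensional family, proving $\dim\ker\ge 1$ unconditionally. A second kernel direction appears iff one can simultaneously extend a solution across the left piece: doubling along the $k$-strand gives a factor $2^{k-1}$, while the right strand shifts the index by $l+1$, and the compatibility condition at the junction reduces to $l+1 \equiv 2^{k-1} \pmod{\chr(\k)}$. The main technical obstacle is bookkeeping --- reading off the precise labels and orientations from the three figures and keeping track of signs during the propagation --- but once this is done, each case is a short telescoping argument whose output is a polynomial condition depending only on $k$, $l$, and $\chr(\k)$.
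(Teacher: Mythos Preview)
Your proposal is correct and follows essentially the same approach as the paper: both arguments solve $Mf=0$ by propagating the kernel equations \eqref{eq-obvious} vertex by vertex along the underlying tree-like graph, obtain the doubling recursion $f(A_i)=2^{i-1}$ on the left strand, and read off the closure condition $l\equiv 2^{k-1}-1$ at the junction. The only cosmetic difference is that the paper identifies the unconditional kernel element in $M_3$ explicitly as the indicator function of the sink vertex $F$ (which has no outgoing edges), whereas you phrase this as ``the $G_2$-analysis of the right half contributes a one-dimensional family''; these amount to the same observation.
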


\begin{proof}[Sketch of Proof] All the equalities are elementary and we show only the third one, which needs the longest argument. We will assume that $k>1$. The arguments in the case $k=1$ are very similar and are left to the reader.

Let us denote $G:=G_3(k,l;\k)$ and  let us  give the vertices of $G$ names as in Figure \ref{fig_graph_j}. 
\begin{figure}[h]%
  \resizebox{\textwidth}{!}{\input{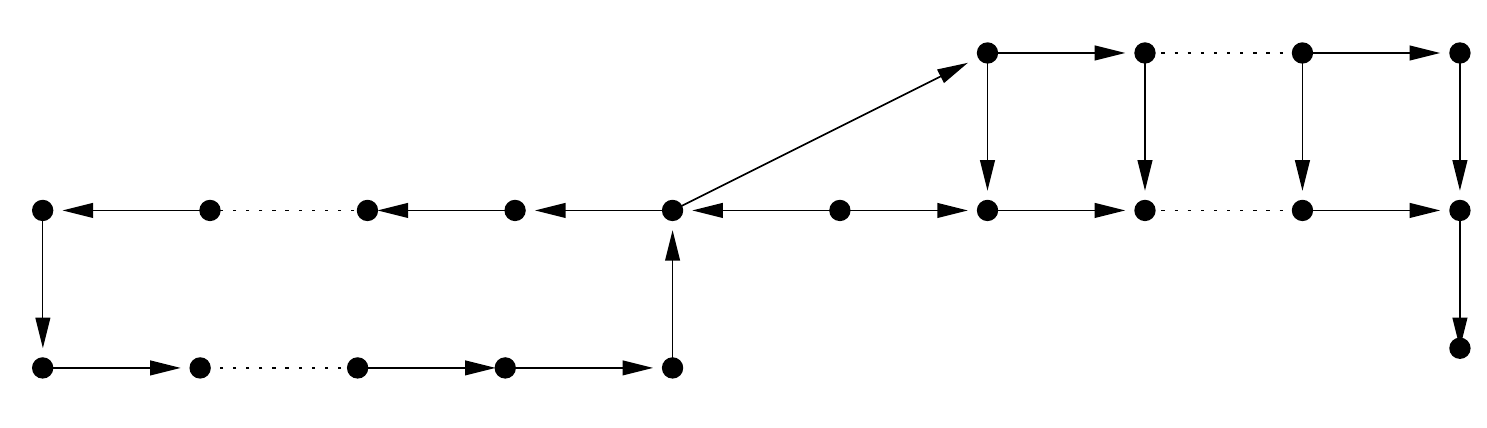_t}}
  \caption{}
  \label{fig_graph_j}
\end{figure}

Let us denote $M :=M_3(k,l;\k)$. First, let us assume  that $l= 2^{k-1}-1$ modulo $\chr(\k)$. The first generator of $\ker (M)$ is the indicator function of the vertex $F$. The coefficients of the second generator of $\ker( M)$ are depicted in Figure \ref{fig_graph_j_ker}.
\begin{figure}[h]%
  \resizebox{\textwidth}{!}{\input{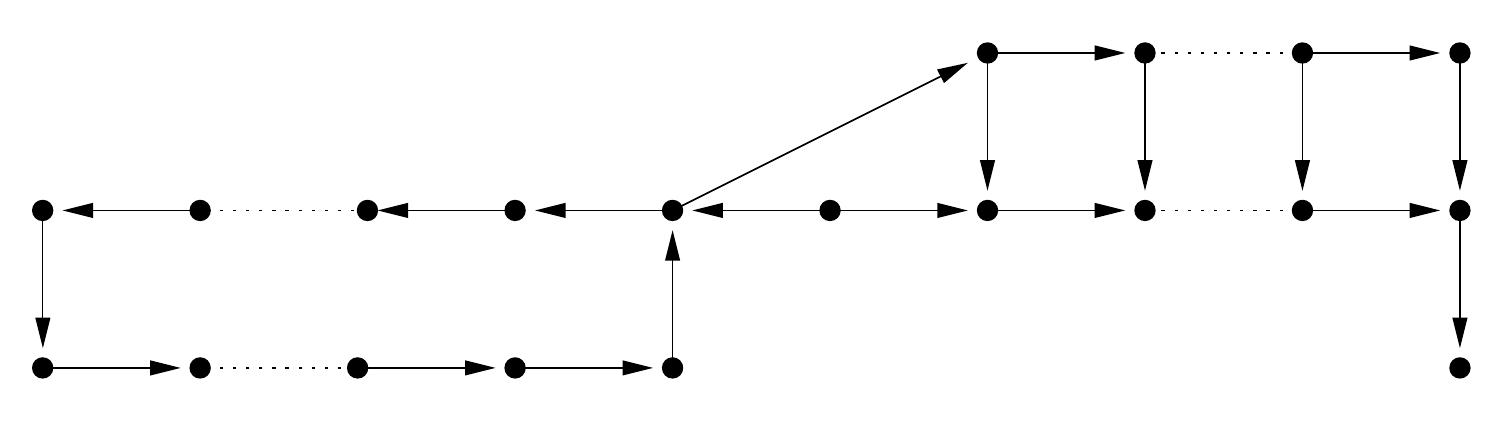_t}}
  \caption{Coefficients of the second generator of $\ker( M)$ when $l= 2^{k-1}-1$ modulo $\chr(\k)$.}
  \label{fig_graph_j_ker}
\end{figure}

To see that these two vectors generate all of $\ker (M)$ let us prove the following.

\begin{lemma}
Let $f\ssin \ker (M)$ be such that $f(F) = f(A_1) = 0$. Then $f=0$.
\end{lemma}
\begin{proof}
Since $f\ssin\ker (M)$, for every vertex $v$ we have
\beq\label{eq-obvious}
	\sum_{w,a\colon(w,v,a)\ssin E(G)}  a \sscdot f(w) = 0,
\eeq
The equation~\eqref{eq-obvious} at $v = A_2$ shows that if $f(A_1)=0$ then also $f(A_2)=0$. Similarly, we see  that $f(A_i) = f(B_i)= 0$ for all $i=1,\ldots, k$. Now the equation~\eqref{eq-obvious}  at $v = A_1$ together with the equalities $f(A_1) = f(B_k)=0$ imply that $f(I)=0$. Similarly, the equation~\eqref{eq-obvious} at $v = C_1$ and the fact that $f(A_1)=0$ imply that $f(C_1)=0$. Now, the equation~\eqref{eq-obvious} at $v = D_1$ together with the equalities $f(I) = f(C_1)=0$ imply that  $f(D_1)=0$.

Now we note that the equation~\eqref{eq-obvious} at $v = C_{i+1}$ and the equality $f(C_i) =0$ imply that $f(C_{i+1})=0$. Thus we get $f(C_i)= 0$ for all $i=1,\ldots, l$. Finally, the equation~\eqref{eq-obvious} at $v = D_{i+1}$ and the equalities $f(D_i) = f(C_{i+1}) =0$ imply that $f(D_{i+1}) =0$, and so we deduce that  $f(D_i) =0$ for all $i=1,\ldots, l$. Since $f(F)=0$ by assumption, the lemma follows.
\end{proof}

Note that the indicator function of the vertex $F$ is in $\ker (M)$ for
arbitrary $(k,l)$. Thus to finish the proof it is enough to show that if $f\ssin
\ker(M)$ is such that $f(A_1)= 1$ and $f(F)=0$ then $l= 2^{k-1}-1$ modulo $\chr(\k)$.

Thus, let $f\ssin \ker(M)$ be such that $f(A_1)= 1$ and $f(F)=0$. The equation~\eqref{eq-obvious} at $v=A_2$ implies that $f(A_2)= 2$. Similarly we obtain $f(A_i) = 2^{i-1}$ for all $i=1,\ldots, k$, and so in particular we have that $f(A_k) =2^{k-1}$.  Now the equation~\eqref{eq-obvious} at  $v = B_1$ shows that $f(B_1)=2^{k-1}$. Similarly we obtain $f(B_i) = 2^{k-1}$ for all $i=1,\ldots, k$, and so in particular $f(B_k)=2^{k-1}$.

Since $f(A_1) =1$ and $f(B_k) =2^{k-1}$ the equation~\eqref{eq-obvious} at $v = A_1$ implies that $f(I) = 2^{k-1}$.  Now the equation~\eqref{eq-obvious} at $v = C_1$ together with the equality $f(A_1) =1$ imply that  $f(C_1)=1$. Similarly we see that $f(C_i)=1$ for all $i=1,\ldots,l$. As such, the equation~\eqref{eq-obvious} at $v = D_1$ implies that $f(D_1) =2^{k-1}-2$ and similarly we see that $f(D_i) = 2^{k-1}-i-1$ modulo $\chr(\k)$, for all $i=1,\ldots, l$. In particular, we see that $f(D_l) =0 $ only if $l= 2^{k-1}-1$ modulo $\chr(\k)$. Since the equation~\eqref{eq-obvious} at $v = F$ implies that $f(D_l)= 0$, this finishes the proof.  
\end{proof}

{\footnotesize \subsection*{Acknowledgements} Ł.G. was supported by EPSRC at Imperial College London and Oxford University, by EPSRC grant EP/K012045/1 at University of Warwick, by Austrian Science Foundation project P25510-N26 during Ł.G.'s stay at T.U.~Graz, and by Fondations Sciences Math{\'e}matiques de Paris during the program \textit{Marches Aléatoires et Géométrie Asymptotique des Groupes} at Institut Henri-Poincaré.

Both authors were supported by the Erwin Schrödinger International Institute for Mathematical Physics during the conference \textit{Measured Group Theory} in February 2016.

Both authors would like to thank Johannes Neumann for pointing out inaccuracies in the proof of Theorem~\ref{thm:compu-tational-tool} in a previous version of this article.
}

\bibliographystyle{alpha}
\bibliography{bibliografia}

\end{document}